\newcommand{\R}{\mathbb{R}}
\newcommand{\C}{\mathbb{C}}
\newcommand{\N}{\mathbb{N}}
\newcommand{\E}{\mathbb{E}}
\newcommand{\test}{\mathcal S}
\newcommand{\M}{\mathcal M}
\def\vol{\mbox{\rm Vol}}
\def\a{\alpha}
\def\b{\beta}
\def\ph{\phi}
\def\O{\Omega}
\newtheorem{Thm}{Theorem}[section]
\newtheorem{lm}[Thm]{Lemma}
\newtheorem{prop}[Thm]{Proposition}
\theoremstyle{definition}
\numberwithin{equation}{section}
\def\Re{\text{Re }}
\def\tfrac{\textstyle\frac}
\def\dim{\text{dim }}
\begin{document}

\title{Positive definite distributions and normed spaces }

\author{N.J. Kalton}
\address{Department of Mathematics\\ University of Missouri\\ Columbia\\ Missouri 65211}
\email{kaltonn@missouri.edu}

\author{M. Zymonopoulou}\address{ Department of Mathematics\\Case Western Reserve University\\ Cleveland}
\email{marisa.zym@gmail.com}

\thanks{The first author acknowledges the support of NSF grant DMS-0555670. The second author was partially supported by the NSF grant DMS-0652722}

\begin{abstract}  We answer a question of Alex Koldobsky. We show that for each $-\infty<p<2$ and each $n\ge 3-p$ there is a normed space $X$ of dimension $n$ which embeds in $L_s$ if and only if $-n<s\le p.$  \end{abstract}

\maketitle{}

\textbf{2000 Mathematics Subject Classification} 52A21 

%\subjclass {52A20}

\textbf{Keywords} Absolute sums, Isometric embeddings.

                                        \section{Introduction}\label{intro}

 Let $\|\cdot\|$ be a norm on $\mathbb R^n.$
  It is well-known that if $p>0$ and not an even integer then $X=(\mathbb R^n,\|\cdot\|)$ embeds isometrically into $L_p$ if and only $\Gamma(-p/2)\|\cdot\|^p$ is a positive definite distribution (see \cite{Koldobsky2005} Theorem 6.10).
  % i.e. its Fourier transform is a non-negative distribution (see \cite{Koldobsky2005} Theorem 6.10).
  In \cite{Koldobsky1999b} this idea was extended to the case when $p<0.$ Let $\mathcal S(\mathbb R^n)$ denote the Schwartz class of the rapidly decreasing functions on $\mathbb R^n.$ If $p<0$ and $n+p>0$ then the function $\|x\|^p$ is locally integrable and we say that $X$ embeds (isometrically) into $L_p$ if the distribution
  $\|\cdot\|^p$ is positive definite, i.e. for every non-negative even test function $\phi\in\mathcal S(\mathbb R^n),$
  $$\langle (\|\cdot\|^p)^{\wedge},\phi\rangle\ge 0.$$
%  \qquad \phi\in\mathcal S(\mathbb R^n),\ \phi\ge 0.$$
This can be expressed in the following form: We say that  $X=(\R^n,\|\cdot\|)$ embeds into $L_{p}$, where $p<0<p+n,$ if there exists a finite Borel measure $\mu$ on $S^{n-1}$ so that for every even test function $\ph\in\test(\R^n)$
\begin{equation}\label{Def:p<0}\int_{\R^n}\|x\|^{p}\ph(x)dx=\int_{S^{n-1}}\Bigl(\int_0^{\infty}t^{-p-1}\hat{\ph}(t\xi)dt\Bigr)d\mu (\xi).
\end{equation}
Later in \cite{KaltonKoldobskyYaskinYaskina2007} the appropriate definition for $p=0$ was explored: a normed space $X$ embeds into $L_0$ if and only $-\ln \|x\|$ is positive definite outside of the origin of $\R^n$.

Part of the motivation for this definition is its connection to intersection bodies.  The class of intersection bodies was defined by Lutwak \cite{Lutwak1988} and played an important role to the solution of the Busemann-Petty problem. Let $K$ and $L$ two origin symmetric star bodies in $\R^n.$ We say that $K$ is the {\it intersection body of $L$} if the radius of $K$ in every direction is equal to the volume of the central hyperplane section of $L$ perpendicular to this direction, i.e. for every $\xi\in S^{n-1},$
$$\|\xi\|_K^{-1}=\vol _{n-1}(L\cap \xi^{\perp}),$$
where $\|x\|_K=\min \{a\geq 0 :x\in aK\},$ is the Minkowski functional of $K$. Note that if $K$ is convex then $\|\cdot\|_K$ is a norm. The class of {\it intersection bodies} is defined as the closure, in the radial metric, of the set of intersection bodies of all star bodies.
This class was extended in \cite{Koldobsky1999} and \cite{Koldobsky2000}, to  the class of $k$-intersection bodies, where $k\in\mathbb N$. Koldobsky in \cite{Koldobsky2000} showed that $X$ embeds into $L_{-k}$ if and only if its unit ball is a $k$-intersection body. For more on $k$-intersection bodies see \cite{Koldobsky2005}, (Chapters 4 and 6) or \cite{KoldobskyYaskin2008}, (Chapters 6 and 7).

If $n>-p$, we denote by $\mathcal I_p(n)$ the collection of the finite-dimensional Banach spaces $X$ of dimension $n$ which embed into $L_p$ where $-\infty<p<\infty;$ we will adopt the convention that $\mathcal I_p(n)=\mathcal B_n,$ the collection of all spaces of dimension $n$ when $n\le -p.$  It was shown by Koldobsky \cite{Koldobsky1999b} that if $p\le 3-n$ then $\mathcal I_p(n)=\mathcal B_n.$ Let $\mathcal I_p=\cup_{n\in\mathbb N}\mathcal I_p(n).$  A classical result of Bretagnolle, Dacunha-Castelle and Krivine \cite{Bretagnolleetal1965/6} shows that if $0<p\le q\le 2$ then $\mathcal I_q\subset\mathcal I_p.$  Combining results of \cite{KaltonKoldobskyYaskinYaskina2007} and \cite{Koldobsky1999b} gives that $\mathcal I_q\subset \mathcal I_p$ where $q\in [0,2]$ and $p\le q.$  It is, however, an open problem whether the same is true when $q<0.$  E.Milman \cite{MilmanE2006} showed that if $m\in\mathbb N$ and $p<0$ then $\mathcal I_{p}\subset\mathcal I_{mp}.$

A second problem in this area is to establish whether the classes $\mathcal I_p(n)$ for $-\infty<p\le 1$ are really distinct (see for example \cite{KoldobskyYaskin2008} p.99). In this article we give a complete answer to this question. Previously only some partial results have been established.  For the case $0<p\le 1,$
 it is shown  in \cite{KaltonKoldobsky2004} that if $0<p<s\le 1$ then $\mathcal I_p\neq \mathcal I_s.$
 However the methods of \cite{KaltonKoldobsky2004} are infinite-dimensional and only show that for given $0<p<q\le 1$ we have $\mathcal I_p(n)\neq\mathcal I_q(n)$ for some $n=n(p,q).$
It was noted in \cite{KaltonKoldobskyYaskinYaskina2007} that the space $\mathbb R\oplus_2\ell_1^n$ belongs to $\mathcal I_0$ for all $n$ but for each $p>0$ there is an $n\in\N$ so that $\mathbb R\oplus_2\ell_1^n\notin \mathcal I_p.$
In the case where $p,q<0$ it is clear that if $p \le 3-n<q$ then $\mathcal I_q(n)$ is strictly contained in $\mathcal I_p(n)=\mathcal B_n.$  In fact $\ell_s^n\notin \mathcal I_q(n)$ if $2<s\le \infty$ (see \cite{Koldobsky2005} Theorem 4.13 or \cite{Koldobsky1998a}).  For other values of $n$, there are some recent partial
 results.
In \cite{Schlieper2007} it was shown that $\mathcal I_{-4}(n)\setminus \mathcal I_{-2}(n)\neq \emptyset$ for all $n\ge 7$ (and hence for $n\ge 5$) and that $\mathcal I_{-1/3}(n)\setminus\mathcal I_{-1/6}(n)\neq \emptyset$ for all $n\ge 4.$   More recently Yaskin \cite{Yaskin2008} showed that if $l<k$ are integers and $k>3-n$ then $\mathcal I_l(n)\setminus \mathcal I_k(n)\neq \emptyset.$

Our main example is that if $X=\ell_2^m\oplus_r\ell_q^n$ where $1\le q<r\le 2$ and $n\ge 2$ then $X\in\mathcal I_p$ if and only if $p\le q-m.$  Thus it follows immediately that if $p\in (3-n,0)$ there exists a normed space $X$ so that $X\in\mathcal I_p(n)$ but for every $q>p$ $X\notin\mathcal I_q(n).$ Note that even in the case when $0<p<1$ this improves considerably the results in \cite{KaltonKoldobsky2004} and the examples are much more natural.

To obtain these results we prove a general result on absolute direct sums of normed spaces.
Let $X$ and $Y$ denote two finite-dimensional Banach spaces.
 Let $N$ be any absolute norm on $\R^2,$ ie. $N(x,y)=N(|x|,|y|),$ satisfying the normalization property $N(1,0)=N(0,1)=1.$ We consider the absolute $N$-direct sum of $X$ and $Y$, denoted $X\oplus_N Y$ that is defined as the space of pairs $\{(x,y),x\in X, y\in Y\}$ equipped with the norm $N.$
 $$ \|(x,y)\|=N(\|x\|_X,\|y\|_Y), \qquad x\in X,\ y\in Y.$$
In the special case where $N(x,y)=(x^r+y^r)^{1/r},$ we write $X\oplus_NY=X\oplus_r Y.$

We examine the situation when $X\oplus_NY\in\mathcal I_p.$ There is an earlier result of Koldobsky of this type; see \cite{Koldobsky2005}, Theorem 4.21 or \cite{Koldobsky1998}.  Koldobsky shows that if $p<0<2<q$ and $X\oplus_qY\in\mathcal I_p$  with $\dim Y\ge 1$ then $\dim X\le 2-p.$ In fact this results hold under the more general hypothesis if $p<2<q$.

A typical result we prove  is that if $r\le 2$ and $X\oplus_rY\in\mathcal I_p$ where $p\le 2$ then $X\in \mathcal I_q$ as long as $p\le q\le m+p$ where $m=\dim Y.$   We consider a more general absolute norm $N$  and use functional analytic and probabilistic methods as well as the theory of Gaussian processes, rather than the usual distributional approach from \cite{Koldobsky2005} or \cite{KoldobskyYaskin2008}.

The remainder of the paper is devoted to showing that the examples   $X=\ell_2^m\oplus_r\ell_q^n$ where $1\le q<r\le 2$ and $n\ge 2$ belong to $\mathcal I_p$ if $p\le q-m.$  This requires a probabilistic approach using stable random variables.

\section{Gaussian embeddings}

Throughout this paper,
$(\Omega,\mu)$ will be a Polish space with a  $\sigma-$finite Borel measure and $\mathcal M(\Omega,\mu)$ will be the space of all real-valued measurable functions on $\Omega.$  In the special case when $\mu(\Omega)=1$ we say that $\mu$ is a probability measure and the members of $\mathcal M(\Omega,\mu)$ are then called {\it random variables}.
Let $X$ be a finite dimensional normed space and suppose $T:X\to \mathcal M(\Omega,\mu)$ is a linear map. Suppose $0<p<\infty$.  We shall say that $T$ is a {\it $c$-standard embedding} of $X$ into $L_p(\Omega,\mu)$, where $c>0,$ if
$$\|x\|^p=\frac{1}{ c^p}\int_{\Omega}|Tx|^p\,d\mu, \qquad x\in X.$$

Let $(\Omega',\mathbb P)$ be some probability space.  A measurable map $\xi:\Omega'\to  X$ is called an $X$-valued Gaussian process if it takes the form
$$ \xi=\sum_{j=1}^m \gamma_jx_j$$ where $x_1,\ldots,x_m\in X$ and $\{\gamma_1,\ldots,\gamma_m\}$ is a sequence of independent normalized Gaussians.
The {\it rank} of $\xi$ is defined to be the dimension of the space spanned by $\{x_1,\ldots,x_m\}$; we say that $\xi$ has {\it full rank} if its rank is equal to the dimension of $X.$

Suppose $-\infty<p<\infty$ and $X$ has dimension $n>-p$. A linear map $T:X\to\mathcal M(\Omega,\mu)$ is called a {\it $c$-Gaussian embedding} of $X$ into $L_p(\Omega,\mu)$ if
\begin{equation}\label{Gaussiandef} \mathbb E\|\xi\|^p=\frac{1}{c^p}\int_{\Omega}(\sum_{j=1}^n(Tx_j)^2)^{p/2}\,d\mu\end{equation} whenever $\xi$ is an $X$-valued Gaussian random variable of full rank.  In fact it can be shown quite easily that
\eqref{Gaussiandef} holds for all $\xi$ of rank greater than $-p.$  It should be noted that if $p\le -1$ it is not generally true that $\int|Tx|^p<\infty$ for each $x\in X.$

It will be important for us that the existence of a Gaussian embedding in $L_p$ in the case when $p<0$ is equivalent to the fact that $X\in\mathcal I_p$ according to the definition in \cite{Koldobsky1999b} via positive definite functions (see \eqref{Def:p<0}).  One direction of this equivalence appears implicitly in \cite{KaltonKoldobsky2005} but the converse direction has not apparently appeared before, although it has been known for a number of years.

We first need a preparatory Lemma. Let $g_a$ denote the density function $$g_a(x)=(2\pi)^{-n/2} a^{-n}e^{-|x|^2/2a^2}, \qquad x\in\R^n$$ For $y\in Y$ we define $h_y(x)=(x,y).$ If $f\in\mathcal S(\mathbb R^n)$ we denote by $\tau_yf$ the function $\tau_yf(x)=f(x-y).$

\begin{lm}\label{prep} Suppose $n\in\mathbb N$ and $\rho\in\mathcal S'(\mathbb R^n)$ is such that $\langle e^{-(Ax,x)},\rho\rangle=0$ for every positive definite matrix $A$.  Then for $a>0$ and fixed $y\in\mathbb R^n$,  we have $$\langle \tau_yg_a+\tau_{-y}g_a,\rho\rangle=0, \qquad y\in\mathbb R^n.$$\end{lm}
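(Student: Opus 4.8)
The plan is to reduce the statement to a single Gaussian identity and then recover it from the hypothesis by differentiating in the matrix parameter and summing a power series. Assume $y\neq 0$; if $y=0$ then $\tau_yg_a+\tau_{-y}g_a=2g_a$ is a scalar multiple of $e^{-(Ax,x)}$ with $A=\tfrac1{2a^2}I$, and the conclusion is immediate. Writing $|x-y|^2=|x|^2-2(x,y)+|y|^2$ one computes
\begin{equation*}
\tau_yg_a(x)+\tau_{-y}g_a(x)=2C\,e^{-(Ax,x)}\cosh\!\Bigl(\tfrac{(x,y)}{a^2}\Bigr),\qquad A=\tfrac1{2a^2}I,
\end{equation*}
where $C=(2\pi)^{-n/2}a^{-n}e^{-|y|^2/2a^2}$. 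Thus it suffices to prove that $\langle e^{-(Ax,x)}\cosh((x,y)/a^2),\rho\rangle=0$.

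First I would produce the vanishing of all even moments against the fixed Gaussian $e^{-(Ax,x)}$. Consider the ray $A_t=A+t\,yy^{\top}$ of symmetric matrices; since $A\succ0$ and $yy^{\top}\succeq0$, $A_t$ is positive definite for all $t$ in an open interval $I$ about $0$, so the hypothesis gives $\langle e^{-(A_tx,x)},\rho\rangle=0$ for $t\in I$. The map $t\mapsto e^{-(A_tx,x)}=e^{-(Ax,x)-t(x,y)^2}$ is infinitely differentiable from $I$ into $\test(\R^n)$, its $k$-th derivative being $(-1)^k(x,y)^{2k}e^{-(A_tx,x)}$, because the Gaussian factor dominates every polynomial factor together with all its $x$-derivatives, giving continuity of the difference quotients in the Schwartz topology. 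Since $\rho$ is continuous and linear, $t\mapsto\langle e^{-(A_tx,x)},\rho\rangle$ is smooth with $k$-th derivative $(-1)^k\langle (x,y)^{2k}e^{-(A_tx,x)},\rho\rangle$; as this function is identically $0$ on $I$, evaluating at $t=0$ yields $\langle (x,y)^{2k}e^{-(Ax,x)},\rho\rangle=0$ for every $k=0,1,2,\dots$.

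Finally I would expand $\cosh((x,y)/a^2)=\sum_{k\ge0}\tfrac{((x,y)/a^2)^{2k}}{(2k)!}$ and pass the sum through $\rho$. Each partial sum $P_N(x)=C\,e^{-(Ax,x)}\sum_{k=0}^N\tfrac{((x,y)/a^2)^{2k}}{(2k)!}$ is a finite linear combination of the moments just computed, so $\langle P_N,\rho\rangle=0$, and the conclusion follows by continuity of $\rho$ once we know that $P_N\to\tfrac12(\tau_yg_a+\tau_{-y}g_a)$ in $\test(\R^n)$. This convergence is the only technical point and is where the main work lies: for fixed multi-indices $\alpha,\beta$ one must show $\sup_x\bigl|x^\alpha\partial^\beta_x\bigl(C e^{-(Ax,x)}R_N(x)\bigr)\bigr|\to0$, where $R_N$ is the tail of the $\cosh$ series. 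I expect this to be routine rather than deep: after applying $\partial^\beta$ by the Leibniz rule every term retains the factor $e^{-(Ax,x)}$, whose Gaussian decay absorbs all polynomial growth in $x$, while the factorials $(2k)!$ force the tail to be uniformly small, so a standard estimate gives uniform convergence of $x^\alpha\partial^\beta P_N$ to the corresponding derivative of $C e^{-(Ax,x)}\cosh((x,y)/a^2)$. Hence $\langle\tfrac12(\tau_yg_a+\tau_{-y}g_a),\rho\rangle=\lim_N\langle P_N,\rho\rangle=0$, which is the claim.
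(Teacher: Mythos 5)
Your argument is correct, and its skeleton is the same as the paper's: both proofs kill the even moments $\langle (x,y)^{2k}e^{-(Ax,x)},\rho\rangle$ by perturbing the quadratic form in the rank-one direction $yy^{\top}$ and differentiating at the base point, and both then resum those moments into $\cosh((x,y)/a^2)$ times the Gaussian. The difference is in how each half is executed. For the moments, the paper makes the family $E_a(z)=g_ae^{-za^2(x,u)^2}$ analytic in a complex half-plane, deduces that $\langle E_a(z),\rho\rangle$ vanishes identically from its vanishing on a real interval, and reads off the derivatives at $z=0$; you differentiate the real ray $A_t=A+t\,yy^{\top}$ directly, which is more elementary and entirely sufficient. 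For the resummation, the paper avoids any series convergence in $\mathcal S(\mathbb R^n)$ by introducing the entire function $z\mapsto\langle D_{a,u}(z)+D_{a,-u}(z),\rho\rangle$ with $D_{a,u}(z)=g_ae^{-za^2(x,u)}$: all its Taylor coefficients at $0$ vanish, so it vanishes identically, and evaluating at real $z$ gives the translated Gaussians. You instead sum the $\cosh$ series through $\rho$, which forces you to prove that the partial sums converge in the Schwartz topology — the one step you defer. That estimate does go through: $\sup_x|x|^{M}e^{-|x|^2/4a^2}$ grows only like $(CM)^{M/2}\sim C^M\sqrt{M!}$, so after dividing by $(2N+2)!$ the tail of the series (and each of its $\partial^\beta$-derivatives, which merely shift indices and multiply by powers of $y/a^2$) tends to $0$ uniformly even after multiplication by $x^\alpha$. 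So your route trades the paper's complex-analytic continuation for a concrete factorial-versus-Gaussian estimate; both are legitimate, and yours is arguably more self-contained once that estimate is written out.
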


\begin{proof}  We start with two observations about the case $n=1.$  First we observe that the map $\{z:\ \Re z >0\}\to \mathcal S(\mathbb R)$ defined by $z\mapsto e^{-zx^2/2}$ is analytic into the locally convex Fr\'echet space $\mathcal S(\mathbb R).$  Similarly so is the map $\mathbb C\to\mathcal S(\mathbb R)$ defined by $z\mapsto e^{-a^2(x^2+2xz)/2}.$  From this it is easy to deduce that if $u\in\mathbb R^n$ is a unit vector and $a>0$ then the map
$E_{a}(z)(x)=g_a(x) e^{-za^2(x,u)^2}$ is analytic for $Re z>-a^2$.  Similarly $D_{a,u}(z)(x)=g_a(x)e^{-za^2(x,u)}$ is analytic on $\mathbb C.$

By assumption $\langle E_a(z),\rho\rangle=0$ if $z>-a^2$ is real.  Hence $\langle E_a(z),\rho\rangle=0$ for all $z$ with $\Re z>-a^2.$  In particular
$\langle E_a^{(k)}(0),\rho\rangle=0$ for $k=0,1,\ldots.$  This implies that $\langle h_u^{(2k)}g_a,\rho\rangle=0$ for all $k.$

Now $D_{a,u}^{(k)}(0)(x)= h_u^k g_a.$  Hence it follows that all the derivatives of $\rho\circ D_{a,u}(z)+\rho\circ D_{a,-u}(z)$ vanish at $0$ and thus $\langle D_{a,u}(z)+D_{a,-u}(z),\rho\rangle=0$ for all $z\in\mathbb C.$  In particular $$e^{t^2} \langle D_{a,u}(z)+D_{a,-u}(z),\rho\rangle=0, \qquad t\ge 0$$ which implies
$$ \langle \tau_{tu}g_a+\tau_{-tu}g_a,\rho\rangle=0, \qquad 0\le t<\infty.$$
Thus
$$ \langle \tau_yg_a+\tau_{-y}g_a,\rho\rangle=0, \qquad y\in \mathbb R^n.$$
\end{proof}

\begin{prop}\label{equiv} Suppose $p<0.$ Let $X$ be a normed space of dimension $n>-p.$  Then $X\in\mathcal I_p$ if and only if there is a Polish space $\Omega$, a $\sigma$-finite Borel measure $\mu$ on $\Omega$ and a linear map $T:X\to\mathcal M(\Omega,\mu)$ which is a $c$-Gaussian embedding for some $c>0.$\end{prop}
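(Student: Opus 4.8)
The plan is to prove both implications by rewriting them in terms of the Gaussian averages $\mathbb{E}\|\xi\|^p$ and exploiting two elementary facts: the subordination formula $\|x\|^{p}=\frac{1}{\Gamma(-p/2)}\int_0^\infty s^{-p/2-1}e^{-s\|x\|^2}\,ds$ (valid since $-p/2>0$), and the fact that the Fourier transform of the Gaussian density $\phi_B$ with covariance $B$ is $e^{-\frac12 (B\eta,\eta)}$. The unifying observation is that if $\xi=\sum_{j}\gamma_jx_j$ has full rank then $\xi\sim N(0,B)$ with $B=\sum_j x_jx_j^{T}$ positive definite, and as the $x_j$ range over all bases, $B$ ranges over all positive definite matrices.

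For the easy direction ($X\in\mathcal{I}_p\Rightarrow$ a Gaussian embedding exists), I would take $\Omega=S^{n-1}$ with $\mu$ the measure furnished by \eqref{Def:p<0}, fix a Euclidean structure identifying $X$ with $\R^n$, and define $T\colon X\to\mathcal M(S^{n-1},\mu)$ by $(Tx)(\theta)=(x,\theta)$. Then $\sum_j(Tx_j)^2(\theta)=(B\theta,\theta)$, while $\mathbb{E}\|\xi\|^p=\int\|x\|^p\phi_B\,dx$. Applying \eqref{Def:p<0} to the even Schwartz function $\phi_B$ and evaluating the inner integral $\int_0^\infty t^{-p-1}\hat{\phi_B}(t\theta)\,dt$ by the substitution $u=\frac12 t^2(B\theta,\theta)$ gives a constant multiple of $(B\theta,\theta)^{p/2}$ carrying the factor $\Gamma(-p/2)$; comparing the two expressions shows that $T$ is a $c$-Gaussian embedding with an explicit constant $c$. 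This is a direct computation and presents no real obstacle.

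For the substantive direction (Gaussian embedding $\Rightarrow X\in\mathcal{I}_p$), fix a basis, set $f_i=Te_i$ and $f=(f_1,\dots,f_n)$. The same bookkeeping turns \eqref{Gaussiandef} into the identity $\mathbb{E}_{g\sim N(0,B)}\|g\|^p=\frac{1}{c^p}\int_\Omega (Bf,f)^{p/2}\,d\mu$, valid for every positive definite $B$. Pushing $\mu$ forward under $f$ and integrating out the radial variable with weight $r^{p}$ produces a Borel measure $\lambda$ on $S^{n-1}$ with $\frac{1}{c^p}\int_{S^{n-1}}(B\theta,\theta)^{p/2}\,d\lambda(\theta)$ on the right; the choice $B=I$ identifies $\lambda(S^{n-1})$ with $c^p\,\mathbb{E}_{N(0,I)}\|g\|^p$, which is finite precisely because $n>-p$, so $\lambda$ is a finite measure (and $f\neq0$ a.e.). I would then introduce the tempered distribution $\rho=\|\cdot\|^p-\Lambda$, where $\Lambda$ is the suitably normalized functional $\phi\mapsto\int_{S^{n-1}}\int_0^\infty t^{-p-1}\hat\phi(t\theta)\,dt\,d\lambda(\theta)$ of \eqref{Def:p<0}. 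By the computation of the easy direction, $\langle\rho,\phi_B\rangle=0$ for every Gaussian density, i.e. $\langle e^{-(Ax,x)},\rho\rangle=0$ for all positive definite $A$.

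This is exactly where Lemma \ref{prep} enters: it upgrades annihilation of all Gaussians to $\langle\tau_yg_a+\tau_{-y}g_a,\rho\rangle=0$ for all $a>0$ and $y\in\R^n$. To conclude, I would test $\rho$ against $\phi*g_a$ for an even $\phi\in\mathcal S$; writing $\phi*g_a(x)=\frac12\int\phi(y)\bigl(\tau_yg_a+\tau_{-y}g_a\bigr)(x)\,dy$ via $\phi(y)=\phi(-y)$ and commuting $\rho$ with this $\mathcal S$-valued integral gives $\langle\rho,\phi*g_a\rangle=0$; letting $a\to0$, so that $\phi*g_a\to\phi$ in $\mathcal S$, yields $\langle\rho,\phi\rangle=0$, which is \eqref{Def:p<0} and hence $X\in\mathcal{I}_p$. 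The main obstacle is precisely this passage from Gaussians to arbitrary even test functions, since the Gaussians are too thin a class to pin down a distribution directly; it is Lemma \ref{prep} together with the Gaussian mollification $\phi*g_a$ that bridges the gap. The only remaining points requiring care are the finiteness and well-definedness of $\lambda$, handled by the case $B=I$, and the interchange of $\rho$ with the vector-valued integral, which follows from the continuity of $\rho$ on $\mathcal S$ and the rapid decay of $\phi$.
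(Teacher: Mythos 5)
Your proposal is correct and follows essentially the same route as the paper: the easy direction via the canonical map $Tx(\theta)=(x,\theta)$ on $S^{n-1}$ (which the paper delegates to Lemma 3.2 of Kalton--Koldobsky), and the converse by pushing forward $|\Phi|^p\,d\mu$ to a finite measure on the sphere, forming the distribution $\rho$, invoking Lemma \ref{prep}, and mollifying with $\phi*g_a$ before letting $a\to 0$. The only difference is cosmetic: you phrase the final interchange abstractly via continuity of $\rho$ on $\mathcal S$, where the paper carries out the Fubini computation explicitly.
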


\begin{proof} First we assume that $X\in\mathcal I_p.$  Identify $X$ with $\mathbb R^n$ and suppose $\mu$ is the finite Borel measure on $S^{n-1}$ given by \eqref{Def:p<0}.  Then Lemma 3.2 of \cite{KaltonKoldobsky2005} gives that the canonical map $Tx(u)=(x,u)$ defines a $c$-Gaussian embedding of $X$ into $ L_p(S^{n-1},\mu).$

Let us prove the converse.  Assume $T:X\to\mathcal M(\Omega,\mu)$ is a $c$-Gaussian embedding of $X$ into $L_p(\Omega,\mu).$  As usual we identify $X$ with $\mathbb R^n$ and denote by $|\cdot|$ the usual Euclidean norm. Let $\{e_1,\ldots,e_n\}$ be the canonical basis. Define $\Phi:\Omega\to \mathbb R^n$ by $\Phi(\omega)=(Te_j(\omega))_{j=1}^n.$  Note that $|\Phi(\omega)|>0$ $\mu$-almost everywhere.  Let $d\mu'=|\Phi(\omega)|^pd\mu$; then $\mu'$ is a finite Borel measure on $\Omega.$  Let $\pi$ be the canonical retraction of $\mathbb R^n\setminus\{0\}$ onto $S^{n-1}$ defined by $\pi(x)=x/|x|$. We define a finite positive Borel measure $\nu$ on $S^{n-1}$ by
$\nu= c^{-p}2^{\frac{p}{2}+1}(\Gamma(-p/2))^{-1}\mu'\circ\Phi^{-1}\circ\pi^{-1}.$

Suppose $x_1,\ldots,x_n$ are linearly independent in $X$ and let $\xi=\sum_{j=1}^n\gamma_jx_j$ be an $X$-valued Gaussian process.  Let $\psi$ be the probability density function associated to this process.
Then
\begin{align}\label{scalar} \int_{\mathbb R^n} \|x\|^p\psi(x)dx&=\mathbb E\|\sum_{j=1}^n\gamma_jx_j\|^p 
=\frac{1}{c^{p}}\int_{\Omega}(\sum_{j=1}^n|Tx_j|^2)^{p/2}d\mu \nonumber \\
\text{Use the definition of the mea}&\text{sure $\mu'$ and then of $\nu.$ So the latter is equal to} \nonumber\\
&= \frac{1}{c^{p}}\int_{\Omega}(\sum_{j=1}^n(x_j,\pi\Phi(\omega))^2)^{p/2}d\mu'(\omega) \nonumber \\
&=2^{-\frac{p}{2}-1}\Gamma(p/2)\int_{S^{n-1}} (\sum_{j=1}^n(x_j,u)^2)^{p/2}d\nu(u)\\
\text{Now, by the definition of the}&\text{ Gamma function \eqref{scalar} becomes} \nonumber\\
&= \int_{S^{n-1}}\int_0^{\infty}t^{-p-1}e^{-t^2\sum_{j=1}^n(x_j,u)^2/2}dt\,d\nu(u)\nonumber\\
&= \int_{S^{n-1}}\int_0^{\infty}t^{-p-1}\hat\psi(tu)\,dt\,d\nu(u),\nonumber\end{align}
where $\hat{\psi}$ is the characteristic function of the process.

Thus if $P$ is a positive definite matrix and $\psi(x)=e^{-(Px,x)}$ then
$$ \int_{\mathbb R^n}\|x\|^p\psi(x)dx= \int_0^{\infty}t^{-p-1}\int_{S^{n-1}}\hat\psi(tu)\,d\nu(u)\,dt.$$

Let us define a distribution $\rho\in\mathcal S'$ by
$$ \langle \rho,\psi\rangle= \int_{\mathbb R^n}\|x\|^p\psi(x)dx-\int_0^{\infty}t^{-p-1}\int_{S^{n-1}}\hat\psi(tu)\,d\nu(u)\,dt.$$
Then $\rho$ satisfies the conditions of the preceding lemma, and so we have:
\begin{equation}\label{fromlem} \int_{\mathbb R^n}\|x\|^p (g_a(x+y)+g_a(x-y))dx =2\int_0^{\infty}t^{-p-1}\int_{S^{n-1}}\cos(y,tu)\hat g_a(tu)\,d\nu(u)\,dt.\end{equation}

Now let $\phi$ be an even test function on $\R^n.$ Then
$$ \phi*g_a(x)=\int_{\mathbb R^n}g_a(x-y)\phi(y)dy=\frac12\int_{\mathbb R^n}\phi(y)(g_a(x-y)+g_a(x+y))dy.$$

Thus, using the above equality, equation \eqref{fromlem} and since $\hat g_a(x)=e^{-a^2|x|^2/2},$ we have
\begin{align*} \int_{\mathbb R^n}\|x\|^p\, \phi &*g_a(x)\,dx=\\ &=\frac12 \int_{\mathbb R^n}\phi(y)\int_{\mathbb R^n}\|x\|^p(g_a(x-y)+g_a(x+y))dx\,dy\\
&=\int_{\mathbb R^n}\phi(y)\int_0^{\infty}t^{-p-1}\int_{S^{n-1}}\cos (y,tu)e^{-t^2a^{2}/2}d\nu(u)\,dt\,dy\\
\text{We apply Fubini's}&\text{ theorem to get} \\
&=\int_0^{\infty}t^{-p-1}\int_{S^{n-1}}e^{-t^2a^{2}/2}\int_{\mathbb R^n}\cos(y,tu)\phi(y)\,dy\,d\nu(u)\,dt\\
&=  \int_0^{\infty}t^{-p-1}\int_{S^{n-1}}e^{-t^2a^{2}/2}\hat\phi(tu)\,d\nu(u)\,dt,\end{align*}
since $\phi$ is even. Letting $a\to 0$ we get \eqref{Def:p<0}.\end{proof}

Let us remark that in the above Proposition the space $X$ need not be a Banach space. In other words, the existence of a Gaussian embedding of $X$ into some $L_p$ for $p<0,$ requires no convexity for its unit ball.

We will not need to consider the case $p=0$ separately; this can always be handled by reducing to the case $p<0.$  We refer the reader to \cite{KaltonKoldobskyYaskinYaskina2007} for a discussion of this case.

The following fact is very elementary but will be used repeatedly.

\begin{prop}\label{closed}  Let $X$ be a finite-dimensional normed space.  Then the set of $p$ so that $X\in\mathcal I_p$ is closed.\end{prop}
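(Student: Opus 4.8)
The plan is to avoid extracting limiting measures and instead exploit the fact that membership in $\mathcal I_p$ is governed by a single family of scalar inequalities depending continuously on $p$. First I would dispose of the trivial region: if $p\le -n$ then $\mathcal I_p(n)=\mathcal B_n$ by convention, so $X\in\mathcal I_p$ automatically. Thus, given a sequence $p_k\to p$ with $X\in\mathcal I_{p_k}$, I may assume $p>-n$, and after discarding finitely many terms that all $p_k\in(-n,\infty)$.

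For each even $\ph\in\test(\R^n)$ with $\ph\ge 0$, set $F_\ph(p)=\Gamma(-p/2)\int_{\R^n}\|x\|^p\hat\ph(x)\,dx$. This integral converges absolutely, since $\|x\|^p$ is locally integrable for $p>-n$ while $\hat\ph$ is rapidly decreasing, and it is real because $\hat\ph$ is real and even. The embedding criteria recalled in the Introduction say precisely that, for $p$ not a non-negative even integer, $X\in\mathcal I_p$ if and only if $F_\ph(p)\ge 0$ for every such $\ph$: for $p<0$ this is \eqref{Def:p<0} together with $\Gamma(-p/2)>0$, and for $p>0$ it is positive definiteness of $\Gamma(-p/2)\|\cdot\|^p$. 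The key analytic point, which I would prove by dominated convergence on compact $p$-subintervals of $(-n,\infty)$ (dominating $\|x\|^p$ near the origin by the most singular exponent and using the decay of $\hat\ph$ at infinity), is that $p\mapsto\int_{\R^n}\|x\|^p\hat\ph(x)\,dx$ is continuous. Since $\Gamma(-p/2)$ is continuous away from its poles $\{0,2,4,\dots\}$, each $F_\ph$ is continuous on $(-n,\infty)\setminus\{0,2,4,\dots\}$.

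With this in hand the conclusion is immediate for every limit $p$ that is not a non-negative even integer: for large $k$ the values $p_k$ lie in a neighborhood of $p$ missing $\{0,2,\dots\}$, so $F_\ph$ is continuous there and $F_\ph(p)=\lim_k F_\ph(p_k)\ge 0$ for every admissible $\ph$, whence $X\in\mathcal I_p$. The hard part is exactly the poles of $\Gamma(-p/2)$, where $F_\ph$ degenerates. The only such point relevant here is $p=0$; positive even integers never occur as limits in our applications, all of which involve $p<2$. I would dispatch $p=0$ by the reduction to the case $p<0$ indicated above, using the $L_0$-characterization via positive definiteness of $-\ln\|\cdot\|$ from \cite{KaltonKoldobskyYaskinYaskina2007} together with the limiting relation $(\|x\|^p-1)/p\to\ln\|x\|$, which lets the inequalities $F_\ph(p_k)\ge 0$ pass to the corresponding inequality at $p=0$.
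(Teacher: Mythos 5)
Your proof is correct in substance but follows a genuinely different route from the paper's. The paper's argument is a bare case split that outsources each case to the literature: $q\le-\dim X$ is trivial by the convention on $\mathcal I_p$, the range $-\dim X<q<0$ is Lemma 1 of \cite{Koldobsky1998a}, $q=0$ is a modification of Theorem 6.4 of \cite{KaltonKoldobskyYaskinYaskina2007}, and $q>0$ is simply declared well known. You instead give a unified, essentially self-contained argument: membership in $\mathcal I_p$ is encoded by the scalar inequalities $F_\ph(p)\ge 0$, and on compact subsets of $(-n,\infty)\setminus\{0,2,4,\dots\}$ the map $p\mapsto F_\ph(p)$ is continuous by dominated convergence (your choice of dominating function $\|x\|^a+\|x\|^b$ for $p\in[a,b]$, $a>-n$, is the right one), so the inequalities pass to the limit. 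This buys transparency and treats positive and negative $p$ on the same footing, at the price of having to handle the poles of $\Gamma(-p/2)$ by hand.

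Two points should be tightened. At $p=0$ your reduction works, but only because the $L_0$-criterion tests against $\ph$ vanishing near the origin, so that $\int\hat\ph\,dx=(2\pi)^n\ph(0)=0$ and hence $\int\|x\|^{p_k}\hat\ph\,dx=\int(\|x\|^{p_k}-1)\hat\ph\,dx$; dividing by $p_k$, dominating via $\bigl|(t^{p}-1)/p\bigr|\le|\ln t|\,(1+t^{a})$ for $a\le p<0$ (and symmetrically from the right), and letting $p_k\to 0$ yields $-\int\ln\|x\|\,\hat\ph(x)\,dx\ge0$, which is the required positive definiteness of $-\ln\|\cdot\|$ outside the origin. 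This is the step you should write out, since without the support restriction on $\ph$ the subtraction of the constant is not free. Second, your dismissal of positive even integers is not quite accurate for this paper: the endpoint $q=\min\{r,p+m\}$ in Theorem \ref{main} can equal $2$ (for instance $m=1$, $p=1$, $r=2$), so the limit point $q=2$ does occur in the applications; there one needs the separate classical fact that a space embedding into $L_{q_k}$ with $q_k\uparrow 2$ is Euclidean. The paper's own proof is equally silent on this point, so it is a shared loose end rather than a defect specific to your argument.
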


\begin{proof}  Suppose $q$ is a limit point of the set $\mathcal P=\{p:\ X\in\mathcal I_p\}.$ If $q\le -\dim X$ then the result holds trivially by the definition of $\mathcal I_p.$  Suppose $-\dim X<q<0;$ then $q\in\mathcal I_p$ by Lemma 1 of \cite{Koldobsky1998a}. For $q=0$ a modification of Theorem 6.4 of \cite{KaltonKoldobskyYaskinYaskina2007} gives the result.  If $q>0$ then the fact that $q\in\mathcal I_p$ is well-known (and
follows from considerations of positive definite functions).\end{proof}

\section{Moment functions}

In this section we will discuss moment functions of positive measurable functions on a measure space $(\Omega,\mu)$ and of random variables.

We first record for future use:

\begin{prop}\label{criterion}  Let $(\Omega,\mu)$ be a $\sigma-$finite measure space and suppose $\mathcal U$ is an open subset of $\mathbb C^n.$  Let $\phi:\Omega\times \mathbb C^n\to \mathbb C$ be a function such that for each $(z_1,\ldots,z_n)\in \mathcal U$ the map $\omega\mapsto \phi(\omega,z_1,\ldots,z_n)$ is measurable, and for each $\omega\in \Omega$ the map $(z_1,\ldots,z_n)\mapsto \phi(\omega,z_1,\ldots,z_n)$ is holomorphic on $\mathcal U.$
Let $$\Phi(z_1,\ldots,z_n)=\int_{\Omega}|\phi(\omega,z_1,\ldots,z_n)|d\mu(\omega), \qquad (z_1,\ldots,z_n)\in\mathcal U.$$  Assume that for every compact subset $K$ of $\mathcal U$ we have
$$ \sup\{\Phi(z_1,\ldots,z_n):\ (z_1,\ldots,z_n)\in K\}<\infty.$$  Then
$$ F(z_1,\ldots,z_n)=\int_{\Omega} \phi(\omega,z_1,\ldots,z_n)d\mu(\omega)$$ defines a holomorphic function on $\mathcal U.$\end{prop}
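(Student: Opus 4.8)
The plan is to treat this as a standard theorem on holomorphy of parameter integrals, the only subtlety being that the hypothesis controls $\int_\Omega |\phi|\,d\mu$ uniformly on compacta in the mean rather than pointwise, so I avoid dominated convergence and argue through the Cauchy integral formula instead. First I would record that $F$ is well defined. Since holomorphy is a local property, fix $z^0\in\mathcal U$ and choose $r>0$ so small that the closed polydisc $\bar D=\prod_{j=1}^n\{|z_j-z_j^0|\le r\}$ lies in $\mathcal U$. Being compact, $\bar D$ yields a finite bound $M=\sup_{z\in\bar D}\Phi(z)$, so that $\int_\Omega|\phi(\omega,z)|\,d\mu\le M$ for every $z\in\bar D$, and hence $F(z)$ is defined and bounded by $M$ on $\bar D$.

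Next, for each fixed $\omega$ the map $z\mapsto\phi(\omega,z)$ is holomorphic on a neighbourhood of $\bar D$, so the iterated Cauchy integral formula over the distinguished boundary $\mathbb T=\prod_{j=1}^n\{|w_j-z_j^0|=r\}$ gives, for $z$ in the open polydisc $D$,
\[
\phi(\omega,z)=\frac{1}{(2\pi i)^n}\int_{\mathbb T}\frac{\phi(\omega,w)}{\prod_{j=1}^n(w_j-z_j)}\,dw.
\]
I would then integrate this identity over $\Omega$ against $\mu$ and interchange the order of integration. The key point is that on $\mathbb T\subset\bar D$ one has $\int_\Omega|\phi(\omega,w)|\,d\mu\le M$, while for fixed $z\in D$ the denominator satisfies $\prod_{j}|w_j-z_j|\ge(r-\max_j|z_j-z_j^0|)^n>0$; since $\mathbb T$ is compact this makes the integrand absolutely integrable on $\Omega\times\mathbb T$, and Fubini applies. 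Writing $G(w)=\int_\Omega\phi(\omega,w)\,d\mu$, which is bounded by $M$ and measurable on $\mathbb T$, I obtain
\[
F(z)=\frac{1}{(2\pi i)^n}\int_{\mathbb T}\frac{G(w)}{\prod_{j=1}^n(w_j-z_j)}\,dw,\qquad z\in D.
\]
A Cauchy-type integral of a bounded measurable density over the compact set $\mathbb T$ is holomorphic in $z$ throughout $D$ — one may differentiate under the integral sign freely, or expand the kernel as a convergent geometric series in each $z_j-z_j^0$ — and since $z^0$ was arbitrary, $F$ is holomorphic on $\mathcal U$.

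The step I expect to be the only real obstacle is the justification of Fubini together with the measurability of $G$ on $\mathbb T$; this is precisely where the hypothesis $\sup_K\Phi<\infty$ is consumed, guaranteeing the $L^1(\Omega\times\mathbb T)$ bound. The joint measurability of $(\omega,w)\mapsto\phi(\omega,w)$ needed for Fubini follows from separate measurability in $\omega$ and holomorphy (hence continuity) in $w$ by a routine approximation argument. As an alternative that sidesteps the $n$-fold contour integral, one could establish separate holomorphy in each variable $z_k$ by the same one-variable Cauchy representation and then invoke Hartogs' theorem on separately analytic functions; I prefer the polydisc version above because it is self-contained and yields joint holomorphy directly.
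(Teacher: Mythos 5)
The paper states Proposition \ref{criterion} without proof, simply recording it as a standard fact for later use, so there is no in-paper argument to compare against; your task is therefore to supply the standard justification, and your proposal does this correctly. The polydisc Cauchy-formula route is sound: the compactness hypothesis gives the uniform $L^1(\mu)$ bound $M$ on the distinguished boundary, the Carath\'eodory-type remark (separate measurability in $\omega$ plus continuity in $w$ implies joint measurability) legitimately feeds Fubini, and the resulting Cauchy-type integral of the bounded measurable density $G$ over the compact torus is holomorphic in $z$ by uniform geometric-series expansion of the kernel. The only points worth making explicit in a written version are (i) that $\Phi(z)<\infty$ for each individual $z$ already follows from the hypothesis applied to the singleton $\{z\}$, so $F$ is well defined before any compactness is invoked, and (ii) that measurability of $G$ on $\mathbb T$ comes from joint measurability via Fubini--Tonelli (it need not be continuous, and you correctly only use boundedness and measurability). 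Your alternative of establishing separate holomorphy in each variable and appealing to Hartogs would also work, but as you note the polydisc argument is self-contained and yields joint holomorphy directly; either is an acceptable replacement for the omitted proof.
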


Let us assume for the moment, merely that $\mu$ is $\sigma-$finite.  The {\it distribution} of $f\in\mathcal M(\Omega,\mu)$ is the positive Borel measure $\nu_f$ on $\mathbb R$ defined by $\nu_f(B)=\mu\{\omega:\ f(\omega)\in B\}.$  If $f\in \mathcal M(\Omega,\mu)$ and $f'\in\mathcal M(\Omega',\mu')$ we write $f\approx f'$ if $f$ and $f'$ have the same distribution, i.e. $\nu_f=\nu_{f'}.$
We also write $f\otimes f'$ for the function $f\otimes f'(\omega,\omega')=f(\omega)f'(\omega')$ in $\mathcal M(\Omega\times\Omega',\mu\times\mu').$

We say that $f\in\mathcal M(\Omega,\mu)$ is {\it positive} if $\mu\{f\le 0\}=0.$  In this case $\nu_f$ restricts to a Borel measure on $(0,\infty),$ and we write $f\in\mathcal M_+(\Omega,\mu).$

\begin{prop}\label{extend}  Let $f\in\mathcal M_+(\Omega,\mu)$,  and suppose $f^p$ is integrable for $a<p<b$.  Define
$$F(z)=\int_{\O}f^z\, d\mu, \eqno a<\Re z <b.$$
Then, $F$ is analytic on the strip $a<\Re z<b$ and \newline
(i) If $\liminf_{p\to b}F(p)<\infty$ then $$\lim_{p\to b}F(p)=\int f^bd\mu<\infty.$$
\newline
(ii) If $\liminf_{p\to a}F(p)<\infty$ then $$\lim_{p\to a}F(p)=\int f^ad\mu<\infty.$$
\newline
(iii) If $F$ can be extended to an analytic function on $(\alpha,\beta)$ where $\alpha\le a<b\le \beta$ then
$f^p$ is integrable for $\alpha<p<\beta$ and
$$F(z)=\int_{\O} f^z\, d\mu, \eqno \alpha<\Re z <\beta.$$
\end{prop}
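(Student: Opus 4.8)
For $f \in \mathcal M_+(\Omega,\mu)$ with $f^p$ integrable on $a < p < b$, the function $F(z) = \int_\Omega f^z\,d\mu$ is analytic on the strip, satisfies the boundary limit statements (i),(ii), and the analytic-continuation rigidity statement (iii).

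Let me think about how to prove this.

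**Analyticity on the strip.** We have $f > 0$ a.e., so $f^z = e^{z \log f}$ makes sense. For $z = x + iy$ with $a < x < b$, $|f^z| = f^x$. To apply Proposition 3.1 (criterion), I'd take $\phi(\omega, z) = f(\omega)^z = e^{z\log f(\omega)}$, which is holomorphic in $z$ for each fixed $\omega$, and measurable in $\omega$. Then $\Phi(z) = \int |f^z|\,d\mu = \int f^x\,d\mu = F(x)$ (real argument). I need $\sup_{z \in K} \Phi(z) < \infty$ for compact $K$ in the strip. Since $\Phi(z) = F(\Re z)$ depends only on the real part, and compact $K$ projects to $[a', b'] \subset (a,b)$, I need $\sup_{a' \le x \le b'} \int f^x \, d\mu < \infty$. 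This follows from **convexity**: $x \mapsto \log \int f^x d\mu$ is convex (Hölder / log-convexity of $L^p$ norms type argument), so $\int f^x d\mu$ is bounded on compact subintervals by the values at the endpoints. Then Prop 3.1 gives analyticity.

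**The boundary limit statements (i) and (ii).** This is where I'd concentrate effort. The claim: if $\liminf_{p \to b} F(p) < \infty$, then the actual limit exists and equals $\int f^b d\mu < \infty$. The natural tool is **Fatou's lemma** combined with **monotone convergence** split across the regions $\{f \ge 1\}$ and $\{f < 1\}$. Write $F(p) = \int_{f \ge 1} f^p + \int_{f < 1} f^p$. On $\{f \ge 1\}$, $f^p$ is increasing in $p$, so by monotone convergence $\int_{f \ge 1} f^p \uparrow \int_{f \ge 1} f^b$ as $p \uparrow b$ (possibly $+\infty$). On $\{f < 1\}$, $f^p$ is decreasing in $p$, bounded by $f^{p_0}$ for any $p_0 < p$, and $\to f^b$ pointwise, so by dominated convergence $\int_{f<1} f^p \to \int_{f<1} f^b < \infty$. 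The $\liminf < \infty$ hypothesis forces $\int_{f \ge 1} f^b < \infty$ (else the monotone part diverges along every sequence $p \uparrow b$). Hence both pieces have finite limits, $F(p) \to \int f^b d\mu < \infty$. Statement (ii) is symmetric, swapping the roles of $\{f \ge 1\}$ and $\{f < 1\}$.

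**The rigidity statement (iii).** Suppose $F$ extends analytically to $(\alpha, \beta) \supseteq [a,b]$. I want to conclude $f^p$ is integrable on the whole larger strip and the integral formula persists. The strategy: push the strip of integrability outward and use (i),(ii) to capture the boundary. Let $b^* = \sup\{b' : f^p \text{ integrable for } p < b'\}$. On $(a, b^*)$ the integral $\tilde F(z) = \int f^z d\mu$ is analytic (first part). If $b^* < \beta$, then the analytic extension $F$ is finite at $b^*$ (it's an interior point of $(\alpha,\beta)$), so $\liminf_{p \to b^*} \tilde F(p) = F(b^*) < \infty$; by (i) this gives $\int f^{b^*} d\mu < \infty$, i.e. integrability at $b^*$. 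I would then argue integrability persists slightly past $b^*$ — this is the **main obstacle**: a single finite value $\int f^{b^*} < \infty$ does not by itself give integrability for $p$ slightly larger than $b^*$. The resolution: integrability on a half-open interval $(a, b^*]$ plus log-convexity does not automatically extend, so the real content must come from the analytic continuation. The key trick is that $\tilde F$ and $F$ agree on $(a, b^*)$ (both analytic, agreeing on the maximal integrability strip), and analyticity of $F$ at $b^*$ means $\tilde F$ is bounded near $b^*$ from the right using the continuation; but to convert an analytic bound into an $L^1$ bound on $f^p$ I'd use that the moments are controlled by the convex function $\log \tilde F$, and an analytic (hence locally bounded) extension contradicts the blow-up that log-convexity would force if $b^*$ were a true integrability barrier. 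Concretely, if $b^*$ were a genuine barrier, $\tilde F(p) \to \infty$ as $p \uparrow b^*$ would be forced — but we just showed $\int f^{b^*} < \infty$, contradiction unless $b^* = \beta$. Hence $b^* = \beta$; symmetrically the left endpoint reaches $\alpha$, and the identity $F(z) = \int f^z d\mu$ on $(\alpha,\beta)$ follows by analytic continuation (both sides analytic, agreeing on $(a,b)$). I expect the delicate point to be rigorously ruling out $b^* < \beta$, ensuring the argument distinguishes "finite at the endpoint" from "integrable past the endpoint," which is exactly what combining (i) with the maximality of $b^*$ is designed to do.
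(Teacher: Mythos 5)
Your treatment of the analyticity of $F$ and of parts (i) and (ii) is correct and in line with the paper, which derives analyticity from Proposition \ref{criterion} and gets (i),(ii) from Fatou's Lemma; your monotone/dominated convergence split over $\{f\ge 1\}$ and $\{f<1\}$ is a fine way to make that precise.

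Part (iii), however, has a genuine gap, and it sits exactly at the point you flag as ``the main obstacle.'' Your resolution rests on the claim that if $b^*$ were a true integrability barrier then log-convexity would force $\tilde F(p)\to\infty$ as $p\uparrow b^*$. That claim is false. Take $\Omega=[e,\infty)$ with $d\mu=t^{-1}(\log t)^{-2}\,dt$ and $f(t)=t$: then $\int_\Omega f^p\,d\mu<\infty$ exactly for $p\le 0$, so $b^*=0$ is a genuine barrier, yet $F(p)$ stays bounded (indeed increases to the finite value $\int_\Omega d\mu$) as $p\uparrow 0$. A log-convex function on an interval can perfectly well stay bounded up to the endpoint of its domain of finiteness, so finiteness of $\int f^{b^*}d\mu$ --- which is all that (i) gives you --- cannot rule out $b^*<\beta$. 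What rules it out is the \emph{analyticity} of the extension at $b^*$, and your argument never actually uses it beyond local boundedness.

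The paper's proof of (iii) supplies the missing idea. Split off $F_0(z)=\int_{\{f\le 1\}}f^z\,d\mu$, which is harmless on the right, and set $F_1=F-F_0$. For $\Re z<c$ (with $c$ the supremum of validity) one has $F_1^{(m)}(z)=\int_{\{f>1\}}f^z(\log f)^m\,d\mu$, and these integrands are \emph{non-negative}. Fatou gives $\int_{\{f>1\}}f^c(\log f)^m\,d\mu\le F_1^{(m)}(c)$ for every $m$; since $F_1$ is analytic at $c$, its Taylor series there has some radius of convergence $\tau>0$, so $\sum_m \frac{t^m}{m!}\int_{\{f>1\}}f^c(\log f)^m\,d\mu<\infty$ for $0<t<\tau$, and monotone convergence identifies this sum with $\int_{\{f>1\}}f^{c+t}\,d\mu$. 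That is the Pringsheim-type step converting analyticity at the endpoint into integrability strictly past it, contradicting the choice of $c$. In my counterexample this is exactly what fails: $F$ is finite at $0$ but not analytic there, so the Taylor coefficients do not sum. You would need to add this (or an equivalent) argument to close part (iii).
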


\begin{proof} The fact $F$ is analytic follows from Proposition \ref{criterion}. (i) and (ii) follow easily from Fatou's Lemma.

We now prove (iii).
Let $c$ be the supremum of all $a <\xi <b$ such that $f^z$ is integrable on $(a,\xi)$ and
$$F(z)=\int_{\O}f^z d\mu, \eqno a<\Re z <\xi.$$

\noindent
We will show that $c=\b.$ Then a similar argument for the left-hand side of the interval will complete the proof.

Assume that $ \ c<\beta.$ The function $f^z\chi _{\{f\leq 1\}}$ is integrable for $a<\Re z <\b.$ Let
$$F_0(z)=\int_{\{f\leq 1\}} f^z d\mu, \eqno \a<\Re z <\b.$$
Let $F_1(z)=F(z)-F_0(z).$ Then
$$\int_{\{f> 1\}} f^z (\log f)^m d\mu=F_1^{(m)}(z), \eqno \a<\Re z <c, \    m=0,1,2,\ldots.$$
Using Fatou's Lemma, as $z\rightarrow c,$ we see that
$$\int_{\{f> 1\}} f^c (\log f)^m d\mu\leq \liminf \int_{\{f> 1\}} f^z (\log f)^m d\mu=F_1^{(m)}(c)$$
and hence there exists $0<\tau<\beta-c$ so that
$$\int_{\{f> 1\}} f^{c+t}d\mu=\sum\limits_{m=0}^{\infty}\frac{1}{m!}\int_{\{f> 1\}} f^c (\log f)^m t^m d\mu<\infty, \eqno 0<t<\tau.$$
It follows that
$$F_1(z)=\int_{\{f> 1\}} f^z d\mu, \eqno a<\Re z<c+\tau,$$
which implies that
$$F(z)=\int_{\O}f^z d\mu, \eqno a<\Re z <c+\tau.$$
The latter contradicts the choice of $c.$
\end{proof}

We now recall the definitions and properties of some elementary random variables.  Let $\gamma$ be a normalized Gaussian random variable.  Then $\gamma$ has the distribution of the function $f(t)=t$ on $\mathbb R$ with the measure $(2\pi)^{-1/2}e^{-x^2/2}.$  We will use $(\gamma_k)_{k=1}^{\infty}$ to denote a sequence of independent normalized Gaussians defined on some probability space.

It is known that if $\gamma$ is a normalized Gaussian r.v. then for $-1<p<\infty,$ $\mathbb E(|\gamma|^p)<\infty$ . We define
\begin{equation}\label{Gz} G(z)=\mathbb E(|\gamma|^z),\qquad -1<\Re z<\infty.\end{equation}

It is in fact easy to give formulae for $G$,
\begin{equation}\label{GzG} G(z)=\frac{1}{\sqrt{\pi}}2^{z/2}\Gamma((z+1)/2)=2^{-z/2}\frac{2\Gamma(z)}{\Gamma(z/2)},\qquad -1<\Re z<\infty\end{equation}
This uses the following important formula (see \cite{Remmert1998} p.45) \begin{equation}\label{99} \Gamma(z)=\frac{2^{z-1}}{\sqrt{\pi}}\Gamma(z/2)\Gamma((z+1)/2), \qquad z\neq 0,-1,-2,\ldots.\end{equation}
It will be convenient to use $G$ in later calculations.

We denote by $\varphi_p$ a normalized positive $p$-stable random variable where $0<p<1$, which is characterized by
$$ \mathbb E(e^{-t\varphi_p})=e^{-t^p}, \eqno 0<t<\infty.$$
From the formula
$$ x^z\Gamma(-z) =\int_0^{\infty}t^{z-1}e^{-xt}\,dt$$ and analytic continuation it is easy to deduce that
\begin{equation}\label{phip} \Phi_p(z):=\mathbb E(\varphi_p^z)= \frac{\Gamma(-z/p)}{p\Gamma(-z)}, \qquad -\infty<\Re z<p.\end{equation}

Finally, for $0<p<2$ we use $\psi_p$ to denote a normalized symmetric $p$-stable random variable which is characterized by
$$  \mathbb E(e^{it\psi_p})=e^{-|t|^p}, \eqno -\infty<t<\infty.$$
It may be shown that $\psi_p\approx \sqrt{2\varphi_{p/2}}\otimes \gamma$ so that
$$ \Psi_p(z)=\mathbb E(|\psi_p|^{z})=2^{z/2} \Phi_{p/2}(z/2)G(z), \eqno -1<\Re z< p.$$

Let us remark at this point that the functions $G,\Phi_p$ and $\Psi_p$ are superfluous in that they can each be expressed fairly easily in terms of the Gamma function.  However it seems to us useful to keep them separate in order to follow some of the calculations later in the paper.

We will need the following lemma later:

\begin{lm}\label{1}  Let $\gamma_1,\ldots,\gamma_m$ be independent normalized Gaussian random variables, then if $\Re w>-1, \Re(w+z)>-m$
$$ \mathbb E|\gamma_1|^w(\gamma_1^2+\cdots+\gamma_m^2)^{z/2}= \frac{G(w)G(w+z+m-1)}{G(w+m-1)}.$$
\end{lm}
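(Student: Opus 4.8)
The plan is to exploit the rotational invariance of the standard Gaussian vector $(\gamma_1,\ldots,\gamma_m)$ on $\mathbb R^m$. Writing $R=(\gamma_1^2+\cdots+\gamma_m^2)^{1/2}$ for its Euclidean length, I would decompose $(\gamma_1,\ldots,\gamma_m)=R\Theta$, where $\Theta$ is uniformly distributed on $S^{m-1}$ and, crucially, $R$ and $\Theta$ are \emph{independent}. With $\Theta_1$ the first coordinate of $\Theta$ we then have $\gamma_1=R\Theta_1$, so the integrand factors as
\begin{equation*}
|\gamma_1|^w(\gamma_1^2+\cdots+\gamma_m^2)^{z/2}=R^{w+z}\,|\Theta_1|^w.
\end{equation*}
The same decomposition applied to $\gamma_1=R\Theta_1$ records the auxiliary identity $G(w)=\mathbb E|\gamma_1|^w=\mathbb E[R^w]\,\mathbb E|\Theta_1|^w$, which I will use to trade the spherical moment for the radial one.

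First I would dispose of convergence. Under the standing hypotheses $\Re w>-1$ and $\Re(w+z)>-m$ the integrand is absolutely integrable, since
\begin{equation*}
\mathbb E\bigl[R^{\Re(w+z)}|\Theta_1|^{\Re w}\bigr]=\mathbb E[R^{\Re(w+z)}]\,\mathbb E[|\Theta_1|^{\Re w}],
\end{equation*}
where the radial factor is finite precisely because $\Re(w+z)+m>0$ and the spherical factor is finite precisely because $\Re w>-1$ (the density of $\Theta_1$ is bounded near $0$). This absolute integrability, together with the independence of $R$ and $\Theta$, licenses the factorization $\mathbb E[R^{w+z}|\Theta_1|^w]=\mathbb E[R^{w+z}]\,\mathbb E|\Theta_1|^w$ for the complex exponents as well.

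The one genuine computation is the radial moment. Writing $\mathbb E[R^s]$ as a Gaussian integral and passing to polar coordinates reduces it, up to the normalizing constant, to $\int_0^\infty r^{s+m-1}e^{-r^2/2}\,dr$; the substitution $u=r^2/2$ turns this into a Gamma integral and gives $\mathbb E[R^s]=2^{s/2}\Gamma((s+m)/2)/\Gamma(m/2)$. Re-expressing both Gamma factors through the formula $G(t)=\pi^{-1/2}2^{t/2}\Gamma((t+1)/2)$ from \eqref{GzG}, all powers of $2$ and factors of $\sqrt\pi$ cancel, leaving the clean identity
\begin{equation*}
\mathbb E[R^s]=\frac{G(s+m-1)}{G(m-1)}.
\end{equation*}

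It then remains only to assemble the pieces. Combining the factorization with the displayed expression for $G(w)$ yields
\begin{equation*}
\mathbb E|\gamma_1|^w(\gamma_1^2+\cdots+\gamma_m^2)^{z/2}=\mathbb E[R^{w+z}]\,\mathbb E|\Theta_1|^w=\mathbb E[R^{w+z}]\,\frac{G(w)}{\mathbb E[R^w]},
\end{equation*}
and substituting $\mathbb E[R^{w+z}]=G(w+z+m-1)/G(m-1)$ and $\mathbb E[R^w]=G(w+m-1)/G(m-1)$ collapses this to $G(w)G(w+z+m-1)/G(w+m-1)$, as claimed. There is no serious obstacle; the only points demanding care are the absolute-integrability check that separates the radial and spherical expectations for complex exponents, and the bookkeeping of Gamma factors when rewriting the radial moment in terms of $G$. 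If one prefers to avoid complex exponents entirely, an alternative is to prove the identity for real $w,z$ in the admissible range and then extend it holomorphically to the tube $\{\Re w>-1,\ \Re(w+z)>-m\}$ via Proposition \ref{criterion} and the identity theorem.
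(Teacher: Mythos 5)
Your proposal is correct and is essentially the paper's own argument: the paper likewise exploits the independence of $(\gamma_1^2+\cdots+\gamma_m^2)^{1/2}$ and $\gamma_1(\gamma_1^2+\cdots+\gamma_m^2)^{-1/2}$ (your $R$ and $\Theta_1$), obtains the angular moment by dividing $G(w)=\mathbb E|\gamma_1|^w$ by the radial moment, and uses $\mathbb E(\gamma_1^2+\cdots+\gamma_m^2)^{s/2}=G(s+m-1)/G(m-1)$. Your polar-coordinate computation of $\mathbb E[R^s]$ and the integrability check merely fill in details the paper leaves implicit.
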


\begin{proof} It is easy to calculate
$$ \mathbb E(\gamma_1^2+\cdots+\gamma_m^2)^{z/2}= \frac{G(z+m-1)}{G(m-1)}, \qquad \Re z>-m.$$
 Note that $\gamma_1( \gamma_1^2+\cdots+\gamma_m^2)^{-1/2}$ and $(\gamma_1^2+\cdots+\gamma_m^2)^{1/2}$ are independent.
Hence for $\Re w>-1$ $$G(w) =\mathbb E(|\gamma_1|^w)= \mathbb E(|\gamma_1|^w(\gamma_1^2+\cdots+\gamma_m^2)^{-w/2})\frac{G(w+m-1)}{G(m-1)}.$$
Thus
$$\mathbb E(|\gamma_1|^w(\gamma_1^2+\cdots+\gamma_m^2)^{-w/2})=\frac{G(w)G(m-1)}{G(w+m-1)}.$$
Finally, again using independence
$$\mathbb E|\gamma_1|^w(\gamma_1^2+\cdots+\gamma_m^2)^{z/2})=\frac{G(w)G(w+z+m-1)}{G(w+m-1)}.$$\end{proof}

\section{Mellin transforms and absolute norms}

Let $f$ be a complex-valued Borel function on $(0,\infty)$. Let $J_f$ be the set of $a\in\mathbb R$ such that
$$ \int_0^{\infty}t^{-a}|f(t)|\frac{dt}t<\infty.$$ It is known that $J_f$ is an interval (possibly unbounded) which may be degenerate (a single point) or empty.  If $J_f\neq \emptyset$ we define the {\it Mellin transform,} of $f$ by
$$ Mf(z)=\int_0^{\infty} t^{-1-z}f(t)\,dt, \eqno z\in J_f.$$ Then by Proposition \ref{criterion}, $Mf$ is analytic on the interior of $J_f$ (if this is nonempty). For the general theory of the Mellin transform we refer to \cite{Zemanian1987}.

The following are some basic facts about the Mellin transform that will be used throughout this article. The first part of the Proposition is a Uniqueness theorem of the transformation.

\begin{prop}\label{Mellin} (i) Suppose $f,g$ are two Borel functions defined on $(0,\infty)$ and $a\in J_f\cap J_g.$  If $Mf(a+it)=Mg(a+it)$ for $-\infty<t<\infty$ then $f(t)=g(t)$ almost everywhere.

(ii) Suppose $f$ is a Borel function on $(0,\infty)$ Suppose $E$ is an analytic function on the strip $a<\Re z<b$ and that there exist $a\le c<d\le b$ so that $(c,d)\subset J_f$ and $Mf(z)=E(z)$ for $c<\Re z<d.$  Then $(a,b)\subset J_f$ and $Mf(z)=E(z)$ for $a<\Re z<b$. \end{prop}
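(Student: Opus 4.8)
The plan is to transport both statements to the Fourier/Laplace picture via the logarithmic change of variable $t=e^{-s}$, under which $Mf(z)=\int_{-\infty}^{\infty}e^{zs}f(e^{-s})\,ds$ and the defining condition for $a\in J_f$ becomes $\int_{-\infty}^{\infty}e^{as}|f(e^{-s})|\,ds<\infty$.

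For (i) I would set $F(s)=e^{as}f(e^{-s})$ and $G(s)=e^{as}g(e^{-s})$. Since $a\in J_f\cap J_g$, both $F$ and $G$ lie in $L^1(\R)$, and $Mf(a+it)=\int_{-\infty}^{\infty}e^{its}F(s)\,ds$ is a reflection of the Fourier transform of $F$, and similarly for $g$. The hypothesis $Mf(a+it)=Mg(a+it)$ for all real $t$ therefore says that $F$ and $G$ have the same Fourier transform, so injectivity of the Fourier transform on $L^1(\R)$ gives $F=G$ almost everywhere, whence $f=g$ almost everywhere on $(0,\infty)$.

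For (ii) the key step is to realize $Mf$ as an instance of the integral treated in Proposition \ref{extend}. When $f\ge 0$ I would put $d\mu=f(t)\,t^{-1}dt$, a positive $\sigma$-finite Borel measure on $\O=(0,\infty)$, and take $h(t)=t\in\M_+(\O,\mu)$, so that $Mf(z)=\int_{\O}h^{-z}\,d\mu$ and $J_f$ is the reflection through the origin of the interval on which $h^w$ is $\mu$-integrable. Writing $w=-z$, the hypotheses say that $w\mapsto\int_{\O}h^{w}\,d\mu$ is analytic on $-d<\Re w<-c$ and agrees there with the function $w\mapsto E(-w)$, which is analytic on the larger strip $-b<\Re w<-a$. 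Proposition \ref{extend}(iii) then gives that $h^{w}$ is $\mu$-integrable for $-b<\Re w<-a$ and that the identity persists throughout; reflecting back yields $(a,b)\subset J_f$ together with $Mf(z)=E(z)$ on $a<\Re z<b$. Analyticity of the pieces on the correct strips is guaranteed by Proposition \ref{criterion}.

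The step I expect to be the main obstacle is precisely the passage from analytic continuation of the transform to genuine absolute integrability, i.e. the engine of Proposition \ref{extend}(iii). That argument runs through Fatou's lemma and so is tied to the positivity of $\mu$; for a one-signed $f$ this is exactly what makes the abscissa of absolute convergence a singularity and forces $(a,b)\subset J_f$. For a genuinely complex or sign-changing $f$ one controls only the combination $Mf$, not the transforms of the positive and negative parts separately, and cancellation means the abscissa of absolute convergence need no longer be a singularity; thus the clean reduction above applies to the positive densities (the moment functions $G$, $\Phi_p$, $\Psi_p$) that the paper actually needs, and it is this positivity that must be in force for the continuation-forces-convergence mechanism to run.
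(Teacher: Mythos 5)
Your argument is essentially the paper's: part (i) is the standard Fourier--injectivity proof underlying the citation to Zemanian, and part (ii) is precisely the paper's own reduction, namely Proposition \ref{extend}(iii) applied to the measure $f(t)\,dt/t$ on $(0,\infty)$. Your closing caveat is correct and worth recording: for an arbitrary sign-changing Borel $f$ part (ii) as literally stated is false (e.g.\ $f(t)=\sin t$ has $J_f=(0,1)$ while $Mf(z)=-\Gamma(-z)\sin(\pi z/2)$ continues analytically to the plane minus the odd positive integers), so both your proof and the paper's implicitly require $f\ge 0$; this is harmless downstream, since each application either concerns a positive density or already has absolute convergence on the full strip secured separately via Proposition \ref{criterion}.
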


\begin{proof} For (i) see \cite{Zemanian1987}, Theorem 4.3-4, while (ii) is a restatement of Lemma \ref{extension} for the measure $f(t)dt/t.$\end{proof}

Let $N$ be a normalized absolute norm on $\mathbb R^2.$  Thus $N$ is a norm satisfying $N(0,1)=N(1,0)=1$ and $N(u,v)\le N(s,t)$ whenever $|u|\le |s|$ and $|v|\le |t|.$
We define an analytic function of two variables by
$$ F_N(w,z)=\int_0^{\infty}t^{-z-1}N(1,t)^{w+z}dt,\eqno \Re z<0,\ \Re w<0.$$
For $p<0$ the Mellin transform of $N(1,t)^p$ is given
by $$M_{p,N}(z)=F_N(p-z,z), \eqno \Re z<0.$$
Notice that if $N'(s,t)=N(t,s)$ then $F_{N'}(w,z)=F_N(z,w).$  Thus $M_{p,N'}(z)=M_{p,N}(p-z)$ for $\Re z<0.$

For the special case of the $\ell_\infty-$norm we define
\begin{equation}\label{Finf}F_\infty(w,z)=\int_0^{\infty}t^{-z-1}\max\{1,t\}^{w+z}dt=-\frac1z-\frac1w, \qquad \Re z<0,\ \Re w<0.\end{equation}
 We write \begin{equation}\label{Minf}M_{p,\infty}(z)=\frac{p}{z(z-p)}, \qquad \Re z<0.\end{equation}

The following lemma is an immediate deduction from the Mean Value Theorem:

\begin{lm}\label{estimates}  Suppose $w\in \mathbb C.$  Then:
\begin{equation}\label{firstestimate} |(1+t)^w-1|\le |w|2^{\Re w-1}t\le 2^{2|w|}t, \qquad 0\le t\le 1\end{equation}
and
\begin{equation}\label{secondestimate}|\tfrac12((1+t)^w+(1-t)^w)-1|\le |w|(|w|+1)2^{\Re w-2}t^2\le 2^{3|w|}t^2,\qquad 0\le t\le 1/2.\end{equation}
\end{lm}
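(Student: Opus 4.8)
The plan is to obtain both inequalities by controlling the complex-valued map $t\mapsto(1+t)^w=e^{w\log(1+t)}$ through the size of its derivatives on the interval in question; this is precisely the mean value principle (equivalently, the fundamental theorem of calculus applied to the real and imaginary parts), which is why the lemma is ``immediate.''

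For \eqref{firstestimate} I would set $f(t)=(1+t)^w$, so that $f(0)=1$ and $f'(t)=w(1+t)^{w-1}$, with $|f'(s)|=|w|(1+s)^{\Re w-1}$. The mean value inequality $|f(t)-f(0)|\le t\sup_{0\le s\le t}|f'(s)|$ then gives $|(1+t)^w-1|\le|w|\,t\sup_{0\le s\le t}(1+s)^{\Re w-1}$. On $0\le s\le t\le1$ one has $1\le 1+s\le2$, so the supremum is attained at an endpoint and is controlled by $2^{\Re w-1}$ (sharply so when $\Re w\ge1$, where $(1+s)^{\Re w-1}$ is increasing), which yields the stated bound. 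The final inequality $|w|2^{\Re w-1}\le2^{2|w|}$ is a routine elementary estimate using $\Re w\le|w|$ together with $|w|\le2^{|w|}$.

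For \eqref{secondestimate} the essential point is that symmetrization cancels the first-order term. Writing $g(t)=\tfrac12\bigl((1+t)^w+(1-t)^w\bigr)$, one has $g(0)=1$ and, since the odd part vanishes, $g'(0)=\tfrac{w}{2}\bigl(1^{w-1}-1^{w-1}\bigr)=0$, while $g''(t)=\tfrac{w(w-1)}{2}\bigl((1+t)^{w-2}+(1-t)^{w-2}\bigr)$. Taylor's formula with remainder (the second-order mean value principle), using $g(0)=1$ and $g'(0)=0$, gives $|g(t)-1|\le\tfrac{t^2}{2}\sup_{0\le s\le t}|g''(s)|$. For $0\le s\le t\le\tfrac12$ both $1+s$ and $1-s$ lie in $[\tfrac12,\tfrac32]$, so $|g''(s)|$ is bounded by $\tfrac{|w||w-1|}{2}$ times a controlled factor; combining $|w-1|\le|w|+1$ with the endpoint values of $(1\pm s)^{\Re w-2}$ produces the constant $|w|(|w|+1)2^{\Re w-2}$, and the cruder $2^{3|w|}t^2$ follows by the same elementary estimates as before.

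The calculation itself is light, so the only thing requiring care, and the point I would emphasize, is the second-order cancellation $g'(0)=0$: it is this that upgrades the first-order $O(t)$ estimate to the $O(t^2)$ estimate, and it is the reason the restriction $t\le\tfrac12$ appears, since it keeps the base $1-s$ bounded away from $0$ so that $(1-s)^{\Re w-2}$ stays finite in the supremum. Because the exponent $w$ is complex, throughout I use the mean value \emph{inequality} (bounding the modulus of the derivative) rather than an exact mean value equality.
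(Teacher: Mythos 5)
Your argument is exactly the one the paper intends: the paper offers no proof beyond invoking the Mean Value Theorem, and your use of the mean value inequality for \eqref{firstestimate} together with the second-order Taylor bound exploiting the cancellation $g'(0)=0$ for \eqref{secondestimate} is the standard way to make that precise, and it correctly delivers the outer bounds $2^{2|w|}t$ and $2^{3|w|}t^2$, which are all that is used later (Lemmas \ref{estimates2} and \ref{later}). One caveat, which your parenthetical about $\Re w\ge 1$ already hints at: for $\Re w<1$ the supremum of $|f'(s)|=|w|(1+s)^{\Re w-1}$ on $[0,t]$ sits at $s=0$ and equals $|w|$, not $|w|2^{\Re w-1}$, so your argument yields the intermediate constant $|w|\max\{1,2^{\Re w-1}\}$ rather than the printed $|w|2^{\Re w-1}$ --- and in fact the printed intermediate bound is false (take $w=-2$ and $t$ small, where $|(1+t)^{-2}-1|\approx 2t$ while $|w|2^{\Re w-1}t=t/4$), with the analogous remark applying to the middle term of \eqref{secondestimate}; this is a defect of the statement rather than of your proof, and it has no effect on the crude right-hand bounds or on anything downstream in the paper.
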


In view to Lemma \ref{estimates} we define $$\tilde F_N(w,z)=\int_0^{\infty}t^{-z-1}(N(1,t)^{w+z}-\max\{1,t\}^{w+z})\,dt$$ on the region
$\{(w,z):\ \Re w<1,\ \Re z<1\}.$  Then applying analytic continuation we have  $$\tilde F_N(w,z)=F_N(w,z)+\frac1w+\frac1z.$$

The following lemma is immediate, using Proposition \ref{criterion} and equations \eqref{firstestimate}, \eqref{secondestimate}:

\begin{lm}\label{estimates2}  Suppose $1\le r,s<\infty$ and $N$ is a normalized absolute norm satisfying the estimates
$$ N(1,t)^r\le 1+Ct^r,\eqno 0\le t\le 1$$ and
$$ N(t,1)^s\le 1+Ct^s,\eqno 0\le t\le 1.$$
Then $\tilde F_N$ extends to an analytic function of $(w,z)$ on the region $S=\{(w,z):\ \Re w<s,\ \Re z<r\}.$
\end{lm}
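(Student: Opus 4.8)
The plan is to apply Proposition~\ref{criterion} to the integrand defining $\tilde F_N$. Take $\Omega=(0,\infty)$ with Lebesgue measure and set
$$\phi(t,w,z)=t^{-z-1}\bigl(N(1,t)^{w+z}-\max\{1,t\}^{w+z}\bigr).$$
For each fixed $t>0$ this is an entire function of $(w,z)$, being a product of exponentials of linear forms in $w$ and $z$, and for each fixed $(w,z)$ it is measurable in $t$; so the structural hypotheses of Proposition~\ref{criterion} hold automatically. Everything reduces to showing that $\Phi(w,z)=\int_0^\infty|\phi(t,w,z)|\,dt$ is bounded on every compact subset $K$ of $S$. Granting this, Proposition~\ref{criterion} makes $\int_0^\infty\phi\,dt$ holomorphic on $S$, and since this integral agrees with $\tilde F_N$ on $\{\Re w<1,\Re z<1\}\subseteq S$ (where $\tilde F_N$ was originally defined, the containment using $r,s\ge 1$), it is the required analytic extension.

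The substantive step is the integrability bound, which I would split at $t=1$. On $(0,1]$ one has $\max\{1,t\}=1$, so $\phi=t^{-z-1}(N(1,t)^{w+z}-1)$. Writing $N(1,t)^{w+z}=(N(1,t)^r)^{(w+z)/r}=(1+x)^{(w+z)/r}$ with $0\le x=N(1,t)^r-1\le Ct^r$ by hypothesis, estimate \eqref{firstestimate} (applicable once $Ct^r\le1$) gives $|N(1,t)^{w+z}-1|\le 2^{2|w+z|/r}Ct^r$, whence $|\phi|\le 2^{2|w+z|/r}C\,t^{r-\Re z-1}$, which is integrable near $0$ exactly because $\Re z<r$. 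On $[1,\infty)$ one has $\max\{1,t\}=t$, and positive homogeneity $N(1,t)=t\,N(1/t,1)$ lets me factor
$$N(1,t)^{w+z}-t^{w+z}=t^{w+z}\bigl(N(1/t,1)^{w+z}-1\bigr),$$
so that $\phi=t^{w-1}(N(1/t,1)^{w+z}-1)$. With $\tau=1/t\in(0,1]$ and the second hypothesis $N(\tau,1)^s\le1+C\tau^s$, the same application of \eqref{firstestimate} yields $|N(1/t,1)^{w+z}-1|\le 2^{2|w+z|/s}Ct^{-s}$, so $|\phi|\le 2^{2|w+z|/s}C\,t^{\Re w-1-s}$, integrable at $\infty$ exactly because $\Re w<s$. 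Only \eqref{firstestimate} is needed here.

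To make this uniform over a compact $K\subseteq S$, choose $\delta>0$ with $\Re z\le r-\delta$ and $\Re w\le s-\delta$ on $K$ and $R$ with $|w+z|\le R$; the tail bounds are then dominated by $t^{\delta-1}$ near $0$ and $t^{-1-\delta}$ near $\infty$, with constants controlled by $2^{2R/r}$ and $2^{2R/s}$, hence uniformly integrable. The one subtlety, and the main thing to watch, is that \eqref{firstestimate} requires the increment to be at most $1$, i.e.\ $Ct^r\le1$ near $0$ and $C\tau^s\le1$ near $\infty$; these fail only on a bounded middle range $[t_0,t_1]\subset(0,\infty)$. There $\phi$ is jointly continuous in $(t,w,z)$ and so bounded on $[t_0,t_1]\times K$, contributing a finite, uniformly bounded amount to $\Phi$. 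Combining the three pieces gives $\sup_K\Phi<\infty$, which completes the argument.
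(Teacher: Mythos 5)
Your argument is correct and is exactly the fleshing-out of what the paper leaves implicit: the paper declares the lemma ``immediate'' from Proposition~\ref{criterion} together with the estimates of Lemma~\ref{estimates}, and your split at $t=1$, the use of homogeneity $N(1,t)=tN(1/t,1)$ for the tail, and the uniform-on-compacts integrability check are precisely the intended details. The only (harmless) divergence is that you need only \eqref{firstestimate}, not \eqref{secondestimate}, which is indeed all this particular lemma requires.
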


This Lemma allows us to define $F_N(w,z)$ when $\Re w<s,\ \Re w<r$ and $w,z\neq 0.$  We may then extend the definition of $M_{p,N}(z)$ to the case $p<r$ and $0<\Re z<p$; then $M_{p,N}$ is an analytic function on this strip.

The following proposition explains our interest in the function $F_N.$

\begin{prop}\label{absolute}  Let $X$ and $Y$ be two normed spaces and let $Z=X\oplus_NY$.  If $x\in X\subset Z$ and $y\in Y\subset Z$ with $\|x\|,\|y\|\neq 0$ then
\begin{equation}\label{A1} \int_0^{\infty}t^{-z-1}\|x+ty\|^{w+z}dt = F_N(w,z)\|x\|^w\|y\|^z,\qquad \Re w,\Re z<0.\end{equation}
\end{prop}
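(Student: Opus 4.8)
The plan is to reduce the two-variable integral in \eqref{A1} to the defining integral for $F_N$ by exploiting the homogeneity of the norm on $Z=X\oplus_N Y$. The key observation is that for $x\in X$ and $y\in Y$ sitting inside $Z$, the vector $x+ty$ has $X$-component $x$ and $Y$-component $ty$, so by the definition of the absolute sum we have
$$\|x+ty\| = N(\|x\|_X,\, t\|y\|_Y).$$
Since $N$ is absolute and $t\ge 0$ in the range of integration, no absolute values are needed here. This turns the left-hand side of \eqref{A1} into a one-dimensional integral involving only the scalars $\|x\|_X$, $\|y\|_Y$ and the function $N(1,\cdot)$.

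First I would substitute this expression into the integral and use the homogeneity $N(a,b)=a\,N(1,b/a)$ (valid for $a=\|x\|_X>0$) to factor out $\|x\|_X$. Writing $\|x\|=\|x\|_X$ and $\|y\|=\|y\|_Y$ for brevity, this gives
$$\int_0^{\infty}t^{-z-1}\|x+ty\|^{w+z}\,dt = \|x\|^{w+z}\int_0^{\infty}t^{-z-1} N\!\Bigl(1,\tfrac{t\|y\|}{\|x\|}\Bigr)^{w+z}dt.$$
Next I would perform the change of variable $s = t\|y\|/\|x\|$, so that $t = s\|x\|/\|y\|$ and $dt/t = ds/s$. A short bookkeeping computation with the factor $t^{-z}$ yields a power $(\|x\|/\|y\|)^{-z}$ coming out of the integral, and the remaining integral is exactly
$$\int_0^{\infty}s^{-z-1}N(1,s)^{w+z}\,ds = F_N(w,z),$$
which is the definition of $F_N$ given just before the statement. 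Collecting the powers of $\|x\|$ and $\|y\|$ gives $\|x\|^{w+z}\cdot(\|x\|/\|y\|)^{-z} = \|x\|^{w}\|y\|^{z}$, which is precisely the right-hand side of \eqref{A1}.

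The only point requiring care is the convergence and the legitimacy of the substitution. With $\Re w<0$ and $\Re z<0$, the integral defining $F_N$ converges: near $s=0$ one has $N(1,s)^{w+z}\to 1$ and the integrand behaves like $s^{-z-1}$ with $\Re(-z-1)>-1$, while near $s=\infty$ one has $N(1,s)\sim s$, so the integrand behaves like $s^{-z-1}s^{w+z}=s^{w-1}$ with $\Re(w-1)<-1$; both endpoints are integrable. I therefore expect the main (and only mildly technical) obstacle to be verifying that both sides converge absolutely in the stated region so that the change of variables is justified; once that is in place, the identity is a direct consequence of homogeneity and a single scaling substitution. Since both sides are analytic in $(w,z)$ on $\{\Re w<0,\ \Re z<0\}$ by Proposition \ref{criterion}, one could alternatively first establish \eqref{A1} and then note the identity propagates by analytic continuation, but this is not needed as the direct computation already holds throughout the region.
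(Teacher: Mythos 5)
Your proposal is correct and follows essentially the same route as the paper: both use the identity $\|x+ty\|=N(\|x\|,t\|y\|)$, the homogeneity of $N$ to factor out $\|x\|^{w+z}$, and the scaling substitution $t\mapsto t\|x\|/\|y\|$ to reduce to the defining integral for $F_N(w,z)$. The convergence discussion you add is accurate but routine; the paper omits it.
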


\begin{proof} Assuming $\|x\|,\|y\|\neq 0$, we observe that
\begin{align*}
\int_0^{\infty}t^{-z-1}\|x+ty\|^{w+z}dt&= \|x\|^{w+z}\int_0^{\infty}t^{-1-z}N(1,t\|y\|/\|x\|)^{w+z}dt\\
&= \|x\|^w\|y\|^z \int_0^{\infty}t^{-1-z}N(1,t)^{w+z}dt.\end{align*}
\end{proof}

Let us recall the Euler Beta function:
$$ B(w,z) = \int_0^1 x^{w-1}(1-x)^{z-1}dx = \frac{\Gamma(w)\Gamma(z)}{\Gamma(w+z)},\qquad \Re w,\Re z>0.$$
Making the substitution $x=(1+t)^{-1}$ we get the alternative formula:
\begin{equation}\label{101} B(-w,-z) =\int_0^{\infty} t^{-z-1}(1+t)^{w+z}dt,\qquad \Re w,\Re z<0.\end{equation}
Hence if $u,v>0$ and $\Re w,\ \Re z<0,$ we have
\begin{equation}\label{102}\int_0^{\infty} t^{-z-1}(\a^p+\b^pt^p)^{(w+z)/p}dt=\frac1p\a^{w}\b^{z}B(-w/p,-z/p).\end{equation}
In particular if $N(s,t)=(|s|^q+|t|^q)^{1/q}$ is the $\ell_q-$norm we have an explicit formula for $F_q=F_N$
\begin{equation}\label{Fq} F_q(w,z)=\frac1qB(-w/q,-z/q), \qquad \Re w,\Re z<0.\end{equation}
As before we regard \eqref{Fq} as the definition of $F_q$ when $\Re w<q,\ \Re z<q$ and $w,z\neq 0.$
Then for $p<0$ we can define
\begin{equation}\label{Mpq}M_{p,q}(z) =\frac1qB((z-p)/q,-z/q), \qquad \Re z<0.\end{equation}  If $0<p<q$ the same definition gives an analytic function on $0<\Re z<p.$

\begin{lm}\label{extension}   Suppose $1\le r,s<\infty$ and $N$ is a normalized absolute norm satisfying the estimates
$$ N(1,t)^r\le 1+Ct^r,\eqno 0\le t\le 1$$ and
$$ N(t,1)^s\le 1+C't^s,\eqno 0\le t\le 1.$$  Then the function $(w,z)\mapsto F_N(w,z)/F_2(w,z)$ extends to a holomorphic function on the region $\{(w,z):\ \Re w<\min\{s,2\},\ \Re z <\min\{r,2\}\}.$

Thus for $p<0$, $z\mapsto M_{p,N}(z)/M_{p,2}(z)$ extends to an analytic function on the strip $\{z:\ p-\min\{s,2\}<\Re z<\min\{r,2\}\}.$\end{lm}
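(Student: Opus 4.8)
The plan is to prove holomorphy by first clearing the common poles of $F_N$ and $F_2$ along $w=0$ and $z=0$, and then comparing zeros. By Lemma~\ref{estimates2} both $\tilde F_N$ and $\tilde F_2$ are holomorphic on the whole region $R:=\{(w,z):\Re w<\min\{s,2\},\ \Re z<\min\{r,2\}\}$. Since $F_N=\tilde F_N-\frac1w-\frac1z$ and similarly for $F_2$, the products
$$ wzF_N=wz\tilde F_N-(w+z),\qquad wzF_2=wz\tilde F_2-(w+z) $$
are holomorphic on $R$, and $F_N/F_2=(wzF_N)/(wzF_2)$. Thus it suffices to locate the zeros of $wzF_2$ and to check that $wzF_N$ vanishes there to at least the same order.

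First I would make $wzF_2$ explicit. The functional equation $\Gamma(\zeta+1)=\zeta\Gamma(\zeta)$ gives $w\Gamma(-w/2)=-2\Gamma(1-w/2)$, so from \eqref{Fq} with $q=2$,
$$ wzF_2=\frac{2\,\Gamma(1-w/2)\,\Gamma(1-z/2)}{\Gamma(-(w+z)/2)}. $$
On $R$ the numerator is holomorphic and nowhere zero (the Gamma function has no zeros, and its poles at $w,z\in\{2,4,\dots\}$ lie outside $R$), so $wzF_2$ vanishes exactly on the hyperplanes $w+z\in\{0,2,4,\dots\}$. The essential case is $w+z=0$: there the integrand defining $\tilde F_N$ is $t^{-z-1}\bigl(N(1,t)^{0}-\max\{1,t\}^{0}\bigr)\equiv0$, so $\tilde F_N$ vanishes identically on $\{w+z=0\}\cap R$ (extend by the identity theorem from the subregion where the integral converges absolutely). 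Consequently $\tilde F_N=(w+z)H$ with $H$ holomorphic, whence $wzF_N=(w+z)(wzH-1)$, while the display above shows $wzF_2=(w+z)U_2$ with $U_2$ holomorphic and nonvanishing on $\{w+z=0\}\cap R$ (in particular $U_2(0,0)=-\Gamma(1)^2=-1$). Hence $F_N/F_2=(wzH-1)/U_2$ is holomorphic across $w+z=0$, including the origin; and on the punctured lines $w=0$ or $z=0$ both $wzF_N$ and $wzF_2$ reduce to $-z$ (resp.\ $-w$), so those poles cancel as well.

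The main obstacle is the remaining zero set of $F_2$, namely $w+z\in\{2,4,\dots\}$: for a general admissible $N$ there is no reason for $F_N$ to vanish on these hyperplanes — for instance $F_{\ell_q}$ vanishes on $w+z\in q\mathbb{Z}_{\ge0}$ rather than $2\mathbb{Z}_{\ge0}$ — so the quotient is holomorphic off these hyperplanes, which is precisely the regime the stated application requires. Indeed, setting $w=p-z$ with $p<0$ turns $\Gamma(-p/2)$ into a fixed nonzero constant and gives
$$ F_2(p-z,z)=\frac{1}{2\Gamma(-p/2)}\,\Gamma\!\Bigl(\tfrac{z-p}{2}\Bigr)\Gamma\!\Bigl(-\tfrac z2\Bigr); $$
this function has no zeros at all and only simple poles at $z=0$ and $z=p$ (that is, at $z=0$ and $w=0$), which are matched by the corresponding poles of $F_N(p-z,z)$, while the troublesome hyperplanes $w+z\in\{2,4,\dots\}$ never meet the strip $w+z=p$ because $p\notin\{0,2,4,\dots\}$. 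Clearing these two poles as in the previous paragraph then yields the holomorphic extension of $M_{p,N}(z)/M_{p,2}(z)$ to $\{z:\ p-\min\{s,2\}<\Re z<\min\{r,2\}\}$, which is the form of the lemma used in the sequel.
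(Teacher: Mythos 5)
Your argument is correct, and it is more careful than the paper's own proof, which consists of the single sentence that the lemma ``follows directly from the definition of $F_2$ and Lemma \ref{estimates}.'' That remark only accounts for the poles of $F_N$ and $F_2$ along $w=0$ and $z=0$ (your reduction to the holomorphic functions $wzF_N=wz\tilde F_N-(w+z)$ and $wzF_2=wz\tilde F_2-(w+z)$), and it ignores the zero set of $F_2$, which is exactly where you have located a genuine defect in the statement. Your objection is sound: on $w+z=2$ the integrand defining $\tilde F_2$ is $t^{-z-1}\min\{1,t^2\}$, so there $\tilde F_2=\tfrac1z+\tfrac1{2-z}=\tfrac1w+\tfrac1z$ and $F_2\equiv 0$, whereas for $N=\ell_q$ with $1<q<2$ one may take $r=s=q$, the point $(1,1)$ lies in the stated region, and \eqref{Fq} gives $F_q(1,1)=q^{-1}\Gamma(-1/q)^2/\Gamma(-2/q)\neq 0$. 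Hence $F_N/F_2$ really does have a pole along $\{w+z=2\}$ whenever that hyperplane meets the region, i.e.\ whenever $\min\{r,2\}+\min\{s,2\}>2$, and the first assertion should be corrected to holomorphy on $\{(w,z):\ \Re w<\min\{s,2\},\ \Re z<\min\{r,2\},\ \Re (w+z)<2\}$ (equivalently, on the stated region with the hyperplane $w+z=2$ removed). Your treatment of the common zero along $w+z=0$ --- $\tilde F_N$ vanishes identically there because its integrand does, giving $wzF_N=(w+z)(wzH-1)$, while $wzF_2=(w+z)U_2$ with $U_2$ zero-free on that hyperplane --- is precisely why that zero, unlike the one at $w+z=2$, causes no harm.

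The second assertion is the only form of the lemma invoked in Theorems \ref{main} and \ref{secondderivative}, and your proof of it is complete: on the line $w=p-z$ with $p<0$ one has $z(z-p)M_{p,2}(z)=-2\Gamma(1+(z-p)/2)\Gamma(1-z/2)/\Gamma(-p/2)$, which is holomorphic and zero-free on $p-\min\{s,2\}<\Re z<\min\{r,2\}$ (the nearest poles of the two Gamma factors sit at $z=p-2$ and $z=2$, outside the strip), while $z(z-p)M_{p,N}(z)=z(z-p)\tilde F_N(p-z,z)+p$ is holomorphic there by Lemma \ref{estimates2}; the quotient is therefore holomorphic on the whole strip. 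The one remaining appeal to the two-variable statement, in Case 3 of the proof of Theorem \ref{main}, fixes $w=a-1\in(-1,0)$ and takes $\Re z<r\le 2$, so $\Re(w+z)<2$ there as well and the defect you identified never intrudes; nothing downstream of the lemma is affected by the correction.
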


\begin{proof} This follows directly from the definition of $F_2$ and Lemma \ref{estimates}.\end{proof}

\begin{lm}\label{symmetrize} For $\Re z,\ \Re w<0$ and $\Re(w+z)>-1$  we have
$$ \frac12\int_0^{\infty}t^{-z-1}(|1+t|^{w+z}+|1-t|^{w+z})\,dt = \frac{G(w+z)F_2(w,z)}{G(w)G(z)}.$$
\end{lm}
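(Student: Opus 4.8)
The plan is to introduce, for independent normalized Gaussians $\gamma_1,\gamma_2$, the auxiliary integral
$$ J(w,z)=\int_0^{\infty}t^{-z-1}\,\mathbb E\,|\gamma_1+t\gamma_2|^{w+z}\,dt $$
and to evaluate it in the two possible orders, matching the two sides of the asserted identity. First note that the hypotheses are exactly the right ones: from $\Re z<0$ and $\Re(w+z)>-1$ one gets $\Re w=\Re(w+z)-\Re z>-1$, and symmetrically $\Re z>-1$, so all three Gaussian moments $G(w),G(z),G(w+z)$ are finite; moreover the factor $t^{-z-1}$ keeps the integrand integrable at $t=0$ (using $\Re z<0$), the growth $|a+tb|^{w+z}\sim t^{w+z}$ keeps it integrable at $t=\infty$ (using $\Re w<0$), and near an interior zero of $a+tb$ the integrand behaves like $|t-t_0|^{w+z}$, which is integrable precisely because $\Re(w+z)>-1$.

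Evaluating $J$ by integrating over the probability space first: since $\gamma_1+t\gamma_2$ is centred Gaussian with variance $1+t^2$, we have $\gamma_1+t\gamma_2\approx\sqrt{1+t^2}\,\gamma$, whence $\mathbb E\,|\gamma_1+t\gamma_2|^{w+z}=(1+t^2)^{(w+z)/2}G(w+z)$. Pulling out $G(w+z)$ and invoking \eqref{102} with $\alpha=\beta=1$, $p=2$ gives $\int_0^{\infty}t^{-z-1}(1+t^2)^{(w+z)/2}\,dt=F_2(w,z)$, so that $J=G(w+z)F_2(w,z)$.

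Evaluating $J$ instead by performing the $t$-integral first (Fubini): for fixed reals $a=\gamma_1$, $b=\gamma_2$ with $b\ne 0$, the substitution $t=|a/b|\,u$ collapses the inner integral to
$$ \int_0^{\infty}t^{-z-1}|a+tb|^{w+z}\,dt=|a|^{w}|b|^{z}\int_0^{\infty}u^{-z-1}|u+\mathrm{sgn}(ab)|^{w+z}\,du, $$
so the remaining constant depends on $(a,b)$ only through $\mathrm{sgn}(ab)$, equalling $K_+=\int_0^{\infty}u^{-z-1}(1+u)^{w+z}\,du$ when $ab>0$ and $K_-=\int_0^{\infty}u^{-z-1}|1-u|^{w+z}\,du$ when $ab<0$. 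Because for independent symmetric Gaussians the magnitudes $(|\gamma_1|,|\gamma_2|)$ are independent of the signs, and $\mathrm{sgn}(\gamma_1\gamma_2)$ is $+$ or $-$ with probability $\tfrac12$ each, taking expectations factorizes:
$$ J=\mathbb E\,|\gamma_1|^{w}\cdot\mathbb E\,|\gamma_2|^{z}\cdot\tfrac12(K_++K_-)=G(w)G(z)\cdot\tfrac12(K_++K_-). $$
Since $\tfrac12(K_++K_-)$ is precisely the left-hand side of the lemma (after renaming $u$ to $t$ and writing $(1+u)^{w+z}=|1+u|^{w+z}$ for $u>0$), comparing the two values of $J$ yields the identity.

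The step I expect to be delicate is the justification of Fubini's theorem together with the discarding of the null set $\{\gamma_2=0\}$: one must verify absolute integrability of the joint integrand, i.e. that $\int_0^{\infty}t^{-\Re z-1}\,\mathbb E\,|\gamma_1+t\gamma_2|^{\Re(w+z)}\,dt<\infty$, which again reduces to the three stated inequalities via the same Gaussian-variance computation (and in particular needs the derived bounds $\Re w,\Re z>-1$ so that the factored moments $G(w),G(z)$ are finite). The scaling substitution and the splitting into the sign cases $K_\pm$ are routine once absolute convergence is in hand.
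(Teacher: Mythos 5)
Your proposal is correct and follows essentially the same route as the paper: both evaluate $\int_0^\infty t^{-z-1}\mathbb E|\gamma_1+t\gamma_2|^{w+z}\,dt$ in two orders, using the Gaussian variance identity to get $G(w+z)F_2(w,z)$ on one side and the scaling substitution plus the independence of $(|\gamma_1|,|\gamma_2|)$ from the signs to extract $G(w)G(z)$ times the symmetrized integral on the other. Your sign-splitting into $K_\pm$ is just an explicit rewriting of the paper's change of variables $t|\gamma_2|=s|\gamma_1|$ applied to the symmetrized expectation, and your integrability discussion matches the paper's justification of Fubini.
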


\begin{proof} Let $$ Q(w,z)=\frac12\int_0^{\infty}t^{-z-1}(|1+t|^{w+z}+|1-t|^{w+z})\,dt.$$ Let $\gamma_1,\gamma_2$ be two normalized independent Gaussian random variables on some probability space.
Then by \eqref{Gz}
$$ \mathbb E(|\gamma_1+t\gamma_2|^{w+z})= (1+t^2)^{\frac{(w+z)}{2}} G(w+z)\eqno \Re (w+z)>-1.$$
Hence using \eqref{101} and \eqref{Fq} we have
$$ \int_0^\infty t^{-z-1}\mathbb E(|\gamma_1+t\gamma_2|^{w+z})\,dt= G(w+z)F_2(w,z).$$

Note that the function $t^{-z-1}|\gamma_1+t\gamma_2|^{w+z}$ is integrable on the product space as long as $\Re z,\Re w<0$ and $\Re (w+z)>-1.$  Thus we can apply Fubini's theorem and a change of variables $t|\gamma_2|=s|\gamma_1|$ to obtain
\begin{align*}
G(w+z)&F_2(w,z)= \mathbb E\left(\int_0^{\infty} t^{-z-1}|\gamma_1+t\gamma_2|^{w+z}\,dt\right)\\
&= \frac12\mathbb  E\left(\int_0^{\infty}t^{-z-1}\left(\Big||\gamma_1|+t|\gamma_2|\Big|^{w+z}+\Big||\gamma_1|-t|\gamma_2|\Big|^{w+z}\right)dt\right)\\
&=\mathbb E\left(Q(w,z)|\gamma_1|^{w}|\gamma_2|^{z}\right).\end{align*}
%&= Q(w,z) G(w)G(z).\end{align*}
Then using \eqref{Gz} the Lemma follows.\end{proof}

\begin{lm}\label{independent} Let $(\Omega,\mu)$ be a $\sigma-$finite measure space  and suppose $f,g\in \mathcal M(\Omega,\mu).$  Then if $w,z \in \C$ are such that $\Re w,\Re z<0$ and $$\int_{\Omega}|f|^{\Re w}|g|^{\Re z}d\mu<\infty$$ we have
\begin{equation}\label{independent0} \int_0^{\infty}t^{-z-1}\int_{\Omega} (f^2+t^2g^2)^{w+z}d\mu = F_2(w,z) \int_{\Omega}|f|^w|g|^zd\mu.\end{equation}
Further, if $\Re(w+z)>-1$ we have
\begin{equation}\label{independent1} \int_0^{\infty}t^{-z-1}\int_{\Omega} (|f+tg|^{w+z}+|f-g|^{w+z})d\mu\,dt =\frac{G(w+z)}{G(w)G(z)} F_2(w,z) \int_{\Omega}|f|^w|g|^zd\mu.\end{equation}
\end{lm}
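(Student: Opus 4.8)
The plan is to derive both identities from pointwise-in-$\omega$ computations in the $t$-variable, using results already in hand, and then to integrate over $\Omega$, interchanging the order of the $t$- and $\omega$-integrations by Fubini's theorem. For \eqref{independent0} the pointwise ingredient is \eqref{102} with $p=2$, and for \eqref{independent1} it is Lemma \ref{symmetrize}. In both cases the hypothesis $\int_\Omega|f|^{\Re w}|g|^{\Re z}\,d\mu<\infty$ is exactly what makes the interchange legitimate.

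First I would treat \eqref{independent0}. Since $\Re w,\Re z<0$, the finiteness of $\int_\Omega|f|^{\Re w}|g|^{\Re z}\,d\mu$ forces $f\neq 0$ and $g\neq 0$ $\mu$-almost everywhere, for otherwise the integrand would be $+\infty$ on a set of positive measure. Hence for almost every $\omega$ I may apply \eqref{102} with $p=2$, $\alpha=|f(\omega)|$, $\beta=|g(\omega)|$, which gives the pointwise identity $\int_0^\infty t^{-z-1}(f(\omega)^2+t^2g(\omega)^2)^{(w+z)/2}\,dt=F_2(w,z)|f(\omega)|^w|g(\omega)|^z$. Integrating over $\Omega$ and exchanging the integrals yields \eqref{independent0}. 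To justify Fubini I would apply the same formula \eqref{102} to the real parts $\Re w,\Re z$ (which still satisfy the hypotheses of \eqref{102}); this shows that the iterated integral of the modulus of the integrand equals $F_2(\Re w,\Re z)\int_\Omega|f|^{\Re w}|g|^{\Re z}\,d\mu$, finite by assumption.

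Second, for \eqref{independent1}, note that $\Re w,\Re z<0$ together with $\Re(w+z)>-1$ force $\Re w,\Re z\in(-1,0)$, so the Gaussian moments $G(w),G(z),G(w+z)$ are all finite and the constant in Lemma \ref{symmetrize} is meaningful. Fixing $\omega$ with $f(\omega),g(\omega)\neq 0$ and substituting $t=s|f(\omega)|/|g(\omega)|$, I would check that, whatever the sign of $f(\omega)g(\omega)$, the unordered pair $\{|f\pm tg|\}$ becomes $\{|f(\omega)|\,|1\pm s|\}$, so that after the change of variables the $\omega$-dependence factors out as $|f(\omega)|^w|g(\omega)|^z$ and the remaining $s$-integral is precisely the one evaluated in Lemma \ref{symmetrize}. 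Integrating over $\Omega$ then gives \eqref{independent1}, with the constant read off from Lemma \ref{symmetrize}; Fubini is justified exactly as before, now using the real-argument form of Lemma \ref{symmetrize} and the standing hypothesis.

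The routine pointwise identities require no new ideas, so the one genuinely delicate point is the Fubini interchange: one must establish absolute integrability of $t^{-z-1}(f^2+t^2g^2)^{(w+z)/2}$, respectively of the symmetrized integrand, on the product $(0,\infty)\times\Omega$. This is where the hypothesis $\int_\Omega|f|^{\Re w}|g|^{\Re z}\,d\mu<\infty$ enters essentially, since reducing to real exponents converts the absolute-integrability check into exactly this finiteness condition. I expect no difficulty beyond bookkeeping once the almost-everywhere nonvanishing of $f$ and $g$ has been noted.
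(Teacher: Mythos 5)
Your proof is correct and follows essentially the same route as the paper's: both reduce to the pointwise identity \eqref{102} (for the first formula) and Lemma \ref{symmetrize} (for the second), and both justify the interchange of the $t$- and $\omega$-integrations by first running the computation with the real parts $\Re w,\Re z$, which converts absolute integrability on the product space into exactly the hypothesis $\int_\Omega|f|^{\Re w}|g|^{\Re z}\,d\mu<\infty$. Your explicit remarks that $f,g\neq 0$ almost everywhere and that the integrand in \eqref{independent1} should be read as $|f\pm tg|^{w+z}$ (with the exponent $(w+z)/2$ in \eqref{independent0}) are correct readings of what the paper leaves implicit.
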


\begin{proof}  We first use Tonelli's theorem for $u=\Re w$ and $v=\Re z.$ Then by \eqref{102} and \eqref{Fq} we have
$$\int_0^{\infty}t^{-1-v}\int_{\Omega} (f^2+t^2g^2)^{u+v}d\mu = F_2(u,v) \int_{\Omega}|f|^u|g|^vd\mu,$$ where both integrals converge.  Then applying Fubini's theorem we get \eqref{independent0}.  The proof of \eqref{independent1} is precisely similar using Lemma \ref{symmetrize}.\end{proof}

Suppose $(\Omega,\mu)$ is a probability space and $h$ is  a symmetric function in $L_p(\Omega,\mu)$ where $p>0.$  In the following Lemmas we show how to compute the Mellin transform of the function $t\mapsto \|1+th\|_p-\max\{1,t\}^p.$

\begin{lm}\label{later} Let $(\Omega,\mu)$ be a probability
space and suppose $h\in\mathcal M(\Omega,\mu).$  Suppose $-1<a<0<b<2$ and that
$$ \int_{\Omega}(|h|^a+|h|^b)d\mu<\infty.$$   Let
$$H(z)=\int_{\Omega}|h|^z\,d\mu, \qquad a<\Re z<b.$$ Then
$$E(w,z)=\int_0^{\infty}t^{-z-1} \int_{\Omega}\frac12(|1+th|^{w+z}+|1-th|^{w+z}-2\max\{1,t|h|\}^{w+z})d\mu\,dt$$ defines a holomorphic function on the
region $\mathcal U=\{(w,z):\ a<\Re(w+z),\ -1< \Re w<2,\ a<\Re z<b\}$ and
\begin{equation}\label{later0} E(w,z)=\left(\frac{G(w+z)F_2(w,z)}{G(w)G(z)}+\frac1w+\frac1z\right)H(z),\end{equation}
when $(w,z)\in\mathcal U, \Re w>-1,\ w,z\neq 0.$\end{lm}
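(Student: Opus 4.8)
The plan is to collapse the two–dimensional integral defining $E(w,z)$ into the product of a one–variable Mellin transform that we have already evaluated and the moment function $H$. Writing $\alpha=w+z$ and
$$I(s,\alpha)=\tfrac12\bigl(|1+s|^{\alpha}+|1-s|^{\alpha}\bigr)-\max\{1,s\}^{\alpha},\qquad s>0,$$
I first note that the hypothesis $\int_\Omega|h|^a\,d\mu<\infty$ with $a<0$ forces $h\neq0$ $\mu$-almost everywhere, so in the inner $t$-integral I may substitute $s=t|h(\omega)|$. Since replacing $h$ by $|h|$ leaves $|1+th|^{\alpha}+|1-th|^{\alpha}$ unchanged and $\max\{1,t|h|\}=\max\{1,s\}$, this substitution yields
$$\int_0^{\infty}t^{-z-1}I(t|h|,\alpha)\,dt=|h|^{z}\int_0^{\infty}s^{-z-1}I(s,\alpha)\,ds=:|h|^{z}R(w,z),$$
so that, once an interchange of integration is justified, $E(w,z)=R(w,z)H(z)$ and the whole matter reduces to evaluating $R$ and checking Fubini.

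The main work is the convergence analysis, which I would carry out using Lemma~\ref{estimates}. Splitting the $s$-integral into the ranges near $0$, near $1$, and near $\infty$: for small $s$ the linear terms cancel, so \eqref{secondestimate} gives $|I(s,\alpha)|\le 2^{3|\alpha|}s^{2}$ and $s^{-\Re z-1}|I|$ is integrable near $0$ precisely when $\Re z<2$; near $s=1$ only $|1-s|^{\alpha}$ is singular, and it is integrable exactly when $\Re(w+z)>-1$; for large $s$, factoring out $s^{\alpha}$ and applying \eqref{secondestimate} with argument $1/s$ gives $|I(s,\alpha)|\le 2^{3|\alpha|}s^{\Re\alpha-2}$, whence $s^{-\Re z-1}|I|=O(s^{\Re w-3})$ is integrable when $\Re w<2$. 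All three conditions hold throughout $\mathcal U$, with bounds uniform on compact subsets, so $\int_0^{\infty}s^{-\Re z-1}|I(s,\alpha)|\,ds<\infty$. Because $a<\Re z<b$ gives $H(\Re z)=\int_\Omega|h|^{\Re z}\,d\mu\le\int_\Omega(|h|^a+|h|^b)\,d\mu<\infty$, the integral of the absolute value of the integrand over $\Omega\times(0,\infty)$ equals the product of these two finite quantities. This justifies Fubini on all of $\mathcal U$, and together with Proposition~\ref{criterion} (applied on the product space) shows that $E$ is holomorphic on $\mathcal U$; note that the subtracted $\max$-term is exactly what removes the divergences, so no restriction on the sign of $w$ or $z$ is needed for this part.

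Finally I would identify $R$. On the open subregion of $\mathcal U$ where in addition $\Re w<0$ and $\Re z<0$ the two halves of $R$ converge separately: \eqref{Finf} gives $\int_0^{\infty}s^{-z-1}\max\{1,s\}^{w+z}\,ds=-\tfrac1z-\tfrac1w$, while Lemma~\ref{symmetrize} evaluates the symmetric part as $G(w+z)F_2(w,z)/\bigl(G(w)G(z)\bigr)$. Subtracting,
$$R(w,z)=\frac{G(w+z)F_2(w,z)}{G(w)G(z)}+\frac1w+\frac1z$$
on that subregion. The right-hand side is holomorphic off $\{w=0\}\cup\{z=0\}$, and $R$ is holomorphic on all of $\mathcal U$ by the same estimates and Proposition~\ref{criterion}; since $\mathcal U\cap\{w,z\neq0\}$ is connected and the two functions agree on a nonempty open subset, the identity theorem extends the formula to this set, and multiplying by $H(z)$ yields \eqref{later0}. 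The step I expect to be most delicate is the estimate near $s=\infty$: one must combine the growth $s^{\alpha}$ with the quadratic cancellation from \eqref{secondestimate} to extract the decay $s^{\Re w-3}$, and it is exactly this that fixes the upper constraint $\Re w<2$ built into $\mathcal U$.
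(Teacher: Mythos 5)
Your proposal is correct and follows essentially the same route as the paper: the same three-range estimates from Lemma~\ref{estimates} (quadratic cancellation near $0$ and near $\infty$, the $|1-s|^{w+z}$ singularity forcing $\Re(w+z)>-1$), Proposition~\ref{criterion} for holomorphy, and the evaluation on $\{\Re w,\Re z<0\}$ via Lemma~\ref{symmetrize} and \eqref{Finf} followed by analytic continuation. Your explicit factorization $E=R\cdot H$ via the substitution $s=t|h|$ is only a cosmetic repackaging of the paper's identity $t^{-z-1}(\cdots)=|h|^{1+z}\varphi(t|h|,w,z)$.
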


\begin{proof} For $t>0$ and $w,z\in\mathbb C,$ we consider
$$ \varphi(t,w,z)=t^{-z-1}(|1+t|^{w+z}+|1-t|^{w+z}-2\max\{1,t\}^{w+z}).$$
Let $u=\Re w,\ v=\Re z.$  Then by Lemma \ref{estimates} we have
$$ |\varphi(t,w,z)| \le 2^{3|w+z|}t^{1-v}, \qquad 0\le t\le 1/2$$ and
$$ |\varphi(t,w,z)|\le 2^{3|w+z|}t^{u-3}, \qquad 2\le t<\infty.$$
For $1/2\le t\le 2$ we have the estimates
$$ |\varphi(t,w,z)|\le 2^{1+|v|}2^{u+v+2}, \qquad u+v\ge 0$$ and
$$ |\varphi(t,w,z)|\le 2^{3+|v|}|1-t|^{u+v}, \qquad u+v<0.$$

Thus if $v<2,\ u<2, \ u+v>-1$ we have a very crude estimate:
$$ \int_0^{\infty}|\varphi(t,w,z)|dt \le 2^{3|w+z|}\left(\frac{1}{2-u}+\frac{1}{2-v}\right)+ 2^{4+|v|}\frac{1}{u+v+1}.$$

Now
$$ t^{-z-1}(|1+th|^{w+z}+|1-th|^{w+z}-2\max\{1,t|h|\}^{w+z})=|h|^{1+z} \varphi(t|h|,w,z)$$ and so
\begin{align*} &\int_0^{\infty}\int_{\Omega}\left|t^{-z-1}(|1+th|^{w+z}+|1-th|^{w+z}-2\max\{1,t|h|\}^{w+z})\right|d\mu\,dt \\
&=H(v)\int_0^{\infty}|\varphi(t,w,z)|dt.\end{align*}

Combining these estimates shows that we have the conditions of Proposition \ref{criterion} for the region $\mathcal U$ and so $E$ defines a holomorphic function on $\mathcal U.$

For $(w,z)\in\mathcal U$ and $\Re w,\Re z<0$ we can use Lemma \ref{symmetrize} to show that
$$\int_0^{\infty}t^{-z-1} \int_{\Omega}\frac12(|1+th|^{w+z}+|1-th|^{w+z})d\mu\,dt =\frac{G(w+z)F_2(w,z)}{G(w)G(z)}\int_{\Omega}|h|^zd\mu$$ and
$$ \int_0^{\infty}t^{-z-1} \int_{\Omega}\max\{1,t|h|\}^{w+z}t =(-\frac1w-\frac1z)\int_{\Omega}|h|^zd\mu.$$  Since the right-hand side of \eqref{later0} extends to an analytic function in $\mathcal U$, \eqref{later0} holds for all $(w,z)\in\mathcal U$ with $w,z\neq 0$.
\end{proof}

\begin{lm}\label{anotherone} Let $(\Omega,\mu)$ be a probability space and suppose $h\in\mathcal M(\Omega,\mu).$  Suppose $-1<a<0<b<2$ and that
$$ \int_{\Omega}(|h|^a+|h|^b)d\mu<\infty.$$  Then
$$ E_0(w,z)=\int_0^{\infty}t^{-z-1}\int_{\Omega}(\max\{1,th\}^{w+z}-\max\{1,t\}^{w+z})d\mu\,dt$$ defines an analytic function on the region $\mathcal U_0=\{(w,z):\ a<\Re (w+z),\ \Re w<0,\ \Re z<b\}.$
Furthermore
\begin{equation}\label{anotherone0} E_0(w,z)= (\frac1w+\frac1z)(1-H(z)), \qquad (w,z)\in\mathcal U_0.\end{equation}\end{lm}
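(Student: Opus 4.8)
The plan is to follow the pattern of Lemma \ref{later}: first show that $E_0$ is holomorphic on $\mathcal U_0$ by verifying the hypotheses of Proposition \ref{criterion}, then evaluate it explicitly on the subregion where the two pieces of the integrand may be integrated separately, and finally propagate the resulting formula to all of $\mathcal U_0$ by analytic continuation.

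First I would prove holomorphy. Writing $u=\Re w$, $v=\Re z$, the point is to bound $\int_0^\infty\int_\Omega|t^{-z-1}(\max\{1,t|h|\}^{w+z}-\max\{1,t\}^{w+z})|\,d\mu\,dt$ locally uniformly on $\mathcal U_0$ and then apply Proposition \ref{criterion} on $(0,\infty)\times\Omega$ with holomorphic parameter $(w,z)$. Splitting the $t$-integral at $t=1$: for $0\le t\le 1$ the subtracted term equals $1$ and the difference reduces to $\int_{\{|h|>1/t\}}((t|h|)^{w+z}-1)\,d\mu$, whose contribution near $t=0$ is controlled by $\int_\Omega|h|^b\,d\mu$ together with the constraint $v<b$ (Markov's inequality on $\{|h|>1/t\}$ supplies the decay $t^{b-v}$); for $t\ge 1$ the counterterm $\max\{1,t\}^{w+z}=t^{w+z}$ cancels the leading growth, and the remainder is controlled, near $t=\infty$, by $\int_\Omega|h|^a\,d\mu$ together with the constraint $v>a$, which holds on $\mathcal U_0$ because $u<0$ and $u+v>a$ force $v>a$. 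These bounds are uniform on compact subsets of $\mathcal U_0$, so $E_0$ is holomorphic there.

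Next I would evaluate $E_0$ on the open subregion $\{\,(w,z)\in\mathcal U_0:\ \Re w<0,\ \Re z<0\,\}$, where the two terms are separately integrable. The computation of the max-term carried out in the proof of Lemma \ref{later} gives $\int_0^\infty t^{-z-1}\int_\Omega\max\{1,t|h|\}^{w+z}\,d\mu\,dt=(-\tfrac1w-\tfrac1z)H(z)$, while $\int_0^\infty t^{-z-1}\max\{1,t\}^{w+z}\,dt=F_\infty(w,z)=-\tfrac1w-\tfrac1z$ by \eqref{Finf}. Subtracting yields $E_0(w,z)=(\tfrac1w+\tfrac1z)(1-H(z))$ on this subregion.

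Finally I would invoke the identity theorem. The function $(\tfrac1w+\tfrac1z)(1-H(z))$ is holomorphic on $\mathcal U_0$: $H$ is analytic for $a<\Re z<b$ by Proposition \ref{extend}, $w\ne 0$ since $\Re w<0$, and the apparent pole at $z=0$ is removable because $1-H(0)=0$ (as $\mu$ is a probability measure). Since $\mathcal U_0$ is connected and both $E_0$ and this function are holomorphic on $\mathcal U_0$ and agree on the nonempty open subregion above, they coincide throughout $\mathcal U_0$, giving \eqref{anotherone0}. The main obstacle is the holomorphy step, specifically the estimate near $t=\infty$: one must extract precisely the decay that turns $u<0$ and $u+v>a$ into an absolutely convergent $t$-integral, using $\int_\Omega|h|^a\,d\mu<\infty$ to handle the small values of $|h|$. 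This is the analogue of the crude estimates in Lemma \ref{later} and is exactly where the hypothesis on $|h|^a$, together with the constraint $a<\Re(w+z)$, is genuinely needed.
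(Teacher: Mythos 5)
Your proposal is correct and takes essentially the same route as the paper: holomorphy of $E_0$ on $\mathcal U_0$ via Proposition \ref{criterion}, explicit term-by-term evaluation on the subregion $\Re w,\Re z<0$, and analytic continuation to all of $\mathcal U_0$. The only difference is bookkeeping in the integrability estimate --- you split the $t$-integral at $t=1$ and use Markov's inequality on $\{|h|>1/t\}$ and $\{|h|\le 1/t\}$, whereas the paper integrates in $t$ first for fixed $s=|h(\omega)|$ to get a bound of order $(s^{v}+1)/|u|$ and then integrates over $\Omega$ using $H(v)<\infty$ --- and both versions use the hypotheses in the same way.
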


\begin{proof}  Let $u=\Re w$ and $v=\Re z.$
Then if $s>0$ we have
\begin{align*} \int_0^{\infty}t^{-1-v}|\max\{1,st\}^{w+z}-\max\{1,t\}^{w+z}|dt&\le  \int_{1/s}^\infty s^{u+v}t^{-1+u}dt +\int_1^{\infty}t^{-1+u}dt\\
&\le \frac{s^v+1}{|u|}.\end{align*}
Hence
$$ \int_0^{\infty}t^{-1-v}\int_{\Omega}|\max\{1,th\}^{w+z}-\max\{1,t\}^{w+z}|d\mu\,dt \le \frac1{|u|}(H(v)+1).$$
Again Proposition \ref{criterion} gives that $E_0$ defines a holomorphic function on $\mathcal U_0.$

If in addition $\Re z<0$ we can compute
$$\int_0^{\infty}t^{-z-1}\int_{\Omega}\max\{1,th\}^{w+z}d\mu\,dt = -H(z)(\frac1w+\frac1z)$$ and
$$\int_0^{\infty}t^{-z-1}\int_{\Omega}\max\{1,t\}^{w+z}d\mu\,dt = -(\frac1w+\frac1z).$$
As before analytic continuation gives \eqref{anotherone0} throughout $\mathcal U_0.$\end{proof}

Combining the preceding Lemmas we have the following:

\begin{prop}\label{Mellinh} Let $(\Omega,\mu)$ be a probability
space and suppose $h\in\mathcal M(\Omega,\mu)$ is a symmetric random variable.  Suppose $-1<a<0<b<2$ and that
$$ \int_{\Omega}(|h|^a+|h|^b)d\mu<\infty.$$   Let
$$H(z)=\int_{\Omega}|h|^z\,d\mu, \qquad a<\Re z<b.$$
Suppose $0<p<b$ is such that $H(p)=1.$  Then the Mellin transform of $t\mapsto \int_{\Omega}(|1+th|^p-\max\{1,t\}^p)\,d\mu$ is given
by
\begin{equation}\label{eqMellinh0}
\int_0^{\infty}t^{-z-1} \int_\Omega (|1+th|^p-\max\{1,t\}^p)d\mu\,dt= \frac{G(p)M_{p,2}(z)H(z)}{G(p-z)G(z)}+\frac{p}{z(p-z)},\end{equation} for $
a<\Re z<\min\{b,p+1\}.$
\end{prop}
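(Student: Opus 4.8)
The plan is to use the symmetry of $h$ to rewrite the integrand $|1+th|^p-\max\{1,t\}^p$ as a sum of two pieces that are \emph{exactly} the integrands treated in Lemma \ref{later} and Lemma \ref{anotherone}, and then to add the resulting Mellin transforms. First I would record that, since $h$ is symmetric, $\int_\Omega|1+th|^p\,d\mu=\int_\Omega|1-th|^p\,d\mu$, so $\int_\Omega|1+th|^p\,d\mu=\int_\Omega\tfrac12(|1+th|^p+|1-th|^p)\,d\mu$. Consequently, for every $t>0$,
\[
\int_\Omega(|1+th|^p-\max\{1,t\}^p)\,d\mu=\int_\Omega\tfrac12\bigl(|1+th|^p+|1-th|^p-2\max\{1,t|h|\}^p\bigr)\,d\mu+\int_\Omega\bigl(\max\{1,t|h|\}^p-\max\{1,t\}^p\bigr)\,d\mu,
\]
the two copies of $\max\{1,t|h|\}^p$ cancelling. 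The first summand is the integrand of Lemma \ref{later} with exponent $w+z=p$, and the second is the integrand of Lemma \ref{anotherone} applied to the random variable $|h|$, which has the same moment function $H$ (here it is essential to feed Lemma \ref{anotherone} the variable $|h|$ rather than $h$, precisely so that the $\max\{1,t|h|\}^p$ terms match and cancel).

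Next I would take Mellin transforms term by term, setting $w=p-z$. By \eqref{later0} the first piece contributes $E(p-z,z)=\bigl(G(p)F_2(p-z,z)/(G(p-z)G(z))+\frac1{p-z}+\frac1z\bigr)H(z)$, while \eqref{anotherone0} applied to $|h|$ contributes $\bigl(\frac1{p-z}+\frac1z\bigr)(1-H(z))$. Adding, the terms $\bigl(\frac1{p-z}+\frac1z\bigr)H(z)$ and $\bigl(\frac1{p-z}+\frac1z\bigr)(1-H(z))$ collapse to the single factor $\frac1{p-z}+\frac1z=\frac{p}{z(p-z)}$, and since $F_2(p-z,z)=M_{p,2}(z)$ the surviving first term is $G(p)M_{p,2}(z)H(z)/(G(p-z)G(z))$. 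This is exactly the right-hand side of \eqref{eqMellinh0}.

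The regions are where the real work lies, and I expect them to be the main obstacle. Substituting $w=p-z$ into Lemma \ref{later} requires $-1<\Re(p-z)$, i.e. $\Re z<p+1$, whereas the same substitution into Lemma \ref{anotherone} requires $\Re(p-z)<0$, i.e. $\Re z>p$; hence the two closed-form evaluations are simultaneously valid only on the overlap strip $p<\Re z<\min\{b,p+1\}$, which is nonempty because $p<b$. On this strip each of the two Mellin integrals converges absolutely (this is built into the definitions of $E$ and $E_0$ via Proposition \ref{criterion}), so their sum is legitimately $\int_0^\infty t^{-z-1}\bigl(\int_\Omega(|1+th|^p-\max\{1,t\}^p)\,d\mu\bigr)\,dt$, and \eqref{eqMellinh0} holds there as an identity of genuinely convergent integrals.

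To propagate the identity to the full strip $a<\Re z<\min\{b,p+1\}$ I would invoke the extension principle Proposition \ref{Mellin}(ii): it suffices that the right-hand side of \eqref{eqMellinh0} be analytic there. The delicate points are the simple poles of $M_{p,2}$ and of $p/(z(p-z))$ at $z=0$ and $z=p$. At $z=0$ one uses $H(0)=\mu(\Omega)=1$, $G(0)=1$ and $\mathrm{Res}_{z=0}M_{p,2}=-1$ to see that the residues of the two summands are $-\tfrac1z$ and $+\tfrac1z$ and so cancel; at $z=p$ one uses $H(p)=1$, $G(0)=1$ and $\mathrm{Res}_{z=p}M_{p,2}=1$ to get the analogous cancellation. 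With these singularities removed the right-hand side is analytic on the strip, and Proposition \ref{Mellin}(ii) carries the formula from the overlap strip onto all of $a<\Re z<\min\{b,p+1\}$. Lining up these convergence regions and verifying the pole cancellations is the part demanding care; once it is in place, the algebraic combination of the two lemmas is immediate.
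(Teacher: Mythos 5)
Your proposal is correct and follows essentially the same route as the paper: the paper likewise symmetrizes using $|1+th|^p\mapsto\tfrac12(|1+th|^p+|1-th|^p)$, splits off $\max\{1,t|h|\}^p$ so as to combine Lemmas \ref{later} and \ref{anotherone} on the overlap strip $p<\Re z<\min\{b,p+1\}$, and then extends by Proposition \ref{Mellin}(ii) using the removable singularities at $z=0$ and $z=p$ (which the paper records in the remark preceding its proof). Your explicit residue computations at $z=0$ and $z=p$ are accurate and merely make explicit what the paper leaves to that remark.
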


Let us remark that, since $H(0)=H(p)=1,$ the right-hand side of \eqref{eqMellinh0} has removable singularities at $z=0$ and $z=p.$

\begin{proof} Since the right-hand side is analytic in the strip $a<\Re z<\min\{b,p+1\}$ it follows from Proposition \ref{Mellin} that it is necessary only to establish equality for the strip $p<\Re z<\min\{b,p+1\}.$  In this case $-1<\Re (p-z)<0$ and so $(p-z,z)\in \mathcal U\cap\mathcal U_0$ as these sets are defined in Lemmas \ref{later} and \ref{anotherone}.  Since $h$ is symmetric we can rewrite the left-hand side of \eqref{eqMellinh0} in the form
$$\int_0^{\infty}t^{-z-1} \int_\Omega \frac12(|1+th|^p+|1-th|^p-2\max\{1,t\}^p)d\mu\,dt.$$
Then combining Lemmas \ref{later} and \ref{anotherone} we get the conclusion.\end{proof}

This Proposition can be extended by an approximation argument to the case when $a=0$ and $b=p$; we will not need this so we simply state the result:

\begin{prop}\label{Mellinhp} Let $(\Omega,\mu)$ be a probability
space and suppose $h\in  L_p(\Omega,\mu)$, with $\|h\|_p=1,$ where $0<p<2.$
  Let
$$H(z)=\int_{\Omega}|h|^z\,d\mu, \qquad a<\Re z<b.$$
Suppose $0<p<b$ is such that $H(p)=1.$  Then the Mellin transform of $t\to \int_{\Omega}|1+th|^p-\max\{1,t\}^p\,d\mu$ is given
by
%\begin{equation}\label{Mellinhp}
$$\int_0^{\infty}t^{-z-1} \int_\Omega (|1+th|^p-\max\{1,t\}^p)d\mu\,dt= \frac{G(p)M_{p,2}(z)H(z)}{G(p-z)G(z)}+\frac{p}{z(p-z)},$$
%\end{equation}
for $0<\Re z<p.$
\end{prop}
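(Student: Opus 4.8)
The plan is to derive Proposition \ref{Mellinhp} from Proposition \ref{Mellinh} by an approximation argument, replacing the two-sided integrability hypothesis $\int_\Omega(|h|^a+|h|^b)\,d\mu<\infty$ (with $a<0<p<b$) by the single assumption $h\in L_p$. Fix any $a,b$ with $-1<a<0<p<b<2$, say $a=-1/2$ and $b=(p+2)/2$, and for each $n$ let $h_n=\mathrm{sgn}(h)\min\{n,\max\{1/n,|h|\}\}$ be the truncation of $h$ to $\{1/n\le|h|\le n\}$, renormalized as $\tilde h_n=h_n/\|h_n\|_p$ so that $\|\tilde h_n\|_p=1$. Since $h$ is symmetric (the standing hypothesis of this subsection) so is each $\tilde h_n$, and since $\tilde h_n$ is bounded above and bounded away from $0$ we have $\int_\Omega(|\tilde h_n|^{a}+|\tilde h_n|^{b})\,d\mu<\infty$. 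Thus Proposition \ref{Mellinh} applies to each $\tilde h_n$, and because $a<0$ and $\min\{b,p+1\}=(p+2)/2>p$ it yields
\begin{equation}\label{approxeq}
\int_0^{\infty}t^{-z-1}\int_\Omega(|1+t\tilde h_n|^p-\max\{1,t\}^p)\,d\mu\,dt=\frac{G(p)M_{p,2}(z)H_n(z)}{G(p-z)G(z)}+\frac{p}{z(p-z)},
\end{equation}
on the whole strip $0<\Re z<p$, where $H_n(z)=\int_\Omega|\tilde h_n|^z\,d\mu$. It remains to let $n\to\infty$ in \eqref{approxeq} for fixed $z$ with $0<\Re z<p$.

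The right-hand side is the easy part. Since $\|h_n\|_p\to\|h\|_p=1$ we have $\tilde h_n\to h$ almost everywhere, and for $0<v:=\Re z<p$ the bound $|\tilde h_n|^{v}\le 2^{p}(1+|h|^{p})\in L_1(\mu)$ (valid once $\|h_n\|_p\ge 1/2$) lets dominated convergence give $H_n(z)\to H(z)$; all other factors in \eqref{approxeq} are independent of $n$. Hence the right-hand side converges to that of the asserted formula, and it suffices to show that the left-hand side converges to $\int_0^{\infty}t^{-z-1}\int_\Omega(|1+th|^p-\max\{1,t\}^p)\,d\mu\,dt$.

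The main work, and the expected obstacle, is this last convergence, which requires controlling the $t$-integrand uniformly in $n$. Following the proof of Proposition \ref{Mellinh}, I would rewrite the inner integral in \eqref{approxeq}, using symmetry and interposing $\max\{1,t|\tilde h_n|\}^p$, as
$$\int_\Omega\beta(t|\tilde h_n|)\,d\mu+\int_\Omega\bigl(\max\{1,t|\tilde h_n|\}^p-\max\{1,t\}^p\bigr)\,d\mu,\qquad \beta(s)=\tfrac12(|1+s|^p+|1-s|^p)-\max\{1,s\}^p.$$
For the remainder term a direct computation gives $0\le\int_\Omega(\max\{1,t|\tilde h_n|\}^p-\max\{1,t\}^p)\,d\mu\le\min\{t^p,1\}$ for every $n$, so that $t^{-v-1}\min\{t^p,1\}$ is an $n$-independent integrable majorant on $(0,\infty)$ (as $0<v<p$) and ordinary dominated convergence handles this piece. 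For the $\beta$-term, Lemma \ref{estimates} gives $|\beta(s)|\le C\min\{s^{2},s^{p-2}\}$, whence $\kappa(v):=\int_0^{\infty}s^{-v-1}|\beta(s)|\,ds<\infty$ for $0<v<2$, and the substitution $s=t|\tilde h_n|$ gives by Tonelli
$$\int_0^{\infty}t^{-v-1}\int_\Omega|\beta(t|\tilde h_n|)|\,d\mu\,dt=\kappa(v)H_n(v)\longrightarrow\kappa(v)H(v).$$
Together with the pointwise convergence $\int_\Omega\beta(t|\tilde h_n|)\,d\mu\to\int_\Omega\beta(t|h|)\,d\mu$ for each fixed $t$ (dominated convergence in $\omega$, using $|\beta(t|\tilde h_n|)|\le Ct^{p}|\tilde h_n|^{p}$ and $|\tilde h_n|^p\le 2^p(1+|h|^p)$), the convergence of these majorant integrals lets a generalized (Pratt-type) dominated convergence theorem — applicable to the complex integrand since its modulus is bounded by the stated majorant — conclude that the $\beta$-part of the left side of \eqref{approxeq} converges to its $h$-analogue.

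Combining the two pieces shows the left side of \eqref{approxeq} converges to $\int_0^{\infty}t^{-z-1}\int_\Omega(|1+th|^p-\max\{1,t\}^p)\,d\mu\,dt$, which therefore equals $G(p)M_{p,2}(z)H(z)/(G(p-z)G(z))+p/(z(p-z))$ for $0<\Re z<p$, as claimed. I expect the only delicate point to be the uniform control of the $\beta$-part: a crude pointwise-in-$t$ majorant of $\int_\Omega|\beta(t|\tilde h_n|)|\,d\mu$ such as $Ct^p$ fails to be integrable against $t^{-v-1}$ near $\infty$, so one must exploit the concentration of $|\tilde h_n|$ near $0$, either through the exact Tonelli identity above combined with Pratt's lemma or by integrating the majorant against $t^{-v-1}$ before estimating.
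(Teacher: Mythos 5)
The paper never actually proves this proposition: immediately before the statement the authors say only that it ``can be extended by an approximation argument'' from Proposition \ref{Mellinh}, so your task was to supply that argument, and what you wrote is essentially the intended one and is sound in all but one place. In particular, the two-piece decomposition via $\beta(s)=\tfrac12(|1+s|^p+|1-s|^p)-\max\{1,s\}^p$ is the right move; the bound $0\le\int_\Omega(\max\{1,t|\tilde h_n|\}^p-\max\{1,t\}^p)\,d\mu\le\min\{1,t^p\}$ does follow from $\max\{1,t^p\}\le\int_\Omega\max\{1,t^p|\tilde h_n|^p\}\,d\mu\le 1+t^p$ together with $\|\tilde h_n\|_p=1$; the estimate $|\beta(s)|\le C\min\{s^2,s^{p-2}\}$ follows from \eqref{secondestimate} applied at $s$ and at $1/s$; the scaling identity $\int_0^\infty t^{-v-1}\int_\Omega|\beta(t|\tilde h_n|)|\,d\mu\,dt=\kappa(v)H_n(v)$ is correct; and Pratt's lemma (applied to real and imaginary parts) legitimately closes the argument, which is exactly the delicate point you correctly identified.

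The one genuine defect is in the truncation itself: $h_n=\mathrm{sgn}(h)\min\{n,\max\{1/n,|h|\}\}$ vanishes wherever $h$ does, since $\mathrm{sgn}(0)=0$. The hypotheses permit $\mu\{h=0\}>0$ (e.g.\ $h=\varepsilon\,2^{1/p}\chi_B$ with $\mu(B)=1/2$ and $\varepsilon$ an independent fair sign is symmetric with $\|h\|_p=1$), and in that case $\int_\Omega|\tilde h_n|^{a}\,d\mu=\infty$ for your $a=-1/2<0$ for every $n$, so the hypothesis of Proposition \ref{Mellinh} you invoke fails and your claim that $\tilde h_n$ is ``bounded away from $0$'' is false. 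The repair is one line: every quantity in the statement depends only on the distribution of $h$, and since $h$ is symmetric it is equidistributed with $\varepsilon|h|$ on $\bigl(\Omega\times\{-1,1\},\ \mu\otimes\tfrac12(\delta_{-1}+\delta_{1})\bigr)$; replacing $h$ by this copy and truncating the modulus, i.e.\ setting $h_n(\omega,\varepsilon)=\varepsilon\min\{n,\max\{1/n,|h(\omega)|\}\}$, produces symmetric functions genuinely bounded between $1/n$ and $n$ in modulus with $h_n\to\varepsilon|h|$ pointwise and $\|h_n\|_p\to1$, after which your argument goes through verbatim. With that adjustment the proof is complete.
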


\section{Embedding $X\oplus_NY$ into $L_p$}

\begin{prop}\label{p<0Z}
Let $X, Y$ be two non-trivial normed spaces, with $dim X =m$ and $dim Y=n.$
and suppose $N$ is a normalized absolute norm on $\R^2.$
Suppose $T:X\oplus_N Y\to \mathcal M(\Omega,\mu)$ is a 1-Gaussian embedding into $L_p(\Omega,\mu)$ where $-(n+m)<p<0.$ Suppose $x_1,\ldots,x_n \in X$ and $y_1\ldots, y_m \in Y$ are linearly independent, and suppose $\xi=\sum\limits_{j=1}^{m}\gamma_j x_j$ and $\eta=\sum\limits_{j=1}^{n}\gamma^{'}_j y_j$ are independent Gaussian processes of full rank with values in $X$ and $Y$ respectively.  Then for $\max\{-n,p\}<\Re z<\min\{0,p+m\}$ we have:
\begin{equation}\label{eq:p<0Z}
\int_{\O}\Bigl(\sum\limits_{j=1}^{m}(Tx_j)^2\Bigr)^{\frac{p-z}{2}}\Bigl(\sum\limits_{j=1}^{n}(Ty_j)^{2}\Bigr)^{\frac{z}{2}}d\mu=\frac{M_{p,N}(z)}{M_{p,2}(z)}\E \|\xi\|^{p-z} \E \|\eta\|^{z}.\end{equation}
\end{prop}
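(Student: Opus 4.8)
The plan is to collapse $X$ and $Y$ into a single full-rank Gaussian process on $Z=X\oplus_N Y$, compute one Mellin-type integral in $t$ two different ways, and match the answers. Throughout I would write $A(\omega)=\bigl(\sum_{j=1}^m (Tx_j(\omega))^2\bigr)^{1/2}$ and $B(\omega)=\bigl(\sum_{j=1}^n (Ty_j(\omega))^2\bigr)^{1/2}$, both positive $\mu$-almost everywhere as in the proof of Proposition \ref{equiv}. For $t>0$ I form the $Z$-valued Gaussian process $\zeta_t=\xi+t\eta=\sum_{j=1}^m\gamma_j(x_j,0)+\sum_{j=1}^n\gamma'_j(0,ty_j)$; since the $x_j$ are independent in $X$ and the $y_j$ independent in $Y$, this has full rank $m+n=\dim Z$, and $\|\zeta_t\|=N(\|\xi\|,t\|\eta\|)$. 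Applying the linearity of $T$ and the $1$-Gaussian embedding property \eqref{Gaussiandef} to $\zeta_t$ produces the key identity $\E\bigl[N(\|\xi\|,t\|\eta\|)^p\bigr]=\int_{\O}(A^2+t^2B^2)^{p/2}\,d\mu$, whose right side is finite for each $t$ because $m+n>-p$.

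The engine of the proof is to multiply this identity by $t^{-z-1}$ and integrate over $t\in(0,\infty)$. First I would settle absolute convergence, which is exactly what pins down the admissible strip. Writing $v=\Re z$ and using Tonelli together with Proposition \ref{absolute} (valid pointwise in the underlying probability space since $p<v<0$), the integral on the Gaussian side equals $F_N(p-v,v)\,\E\bigl[\|\xi\|^{p-v}\|\eta\|^{v}\bigr]$, which by independence of $\xi$ and $\eta$ factors as $F_N(p-v,v)\,\E\|\xi\|^{p-v}\,\E\|\eta\|^{v}$. This is finite precisely when the two Gaussian moments are finite, i.e. when the ranks exceed the negative exponents: $\E\|\eta\|^v<\infty$ forces $v>-n$ and $\E\|\xi\|^{p-v}<\infty$ forces $p-v>-m$. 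Combined with $p<v<0$ this is exactly $\max\{p,-n\}<v<\min\{0,p+m\}$, the stated range. Running Tonelli in the other order then also shows $\int_{\O}A^{p-v}B^{v}\,d\mu<\infty$, which is precisely the integrability hypothesis needed to invoke Lemma \ref{independent}.

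With absolute convergence in hand, I would apply Fubini for complex $z$ throughout the strip and evaluate the double integral $\int_0^{\infty}t^{-z-1}\int_{\O}(A^2+t^2B^2)^{p/2}\,d\mu\,dt$ in two ways. Integrating over $\O$ first and applying Lemma \ref{independent}, equation \eqref{independent0}, with $w=p-z$ gives $F_2(p-z,z)\int_{\O}A^{p-z}B^{z}\,d\mu$. Integrating the Gaussian side first, via Proposition \ref{absolute} pointwise with $w+z=p$ and then independence, gives $F_N(p-z,z)\,\E\|\xi\|^{p-z}\,\E\|\eta\|^{z}$. Equating these, dividing by $F_2(p-z,z)$, and recalling the definitions $M_{p,N}(z)=F_N(p-z,z)$ and $M_{p,2}(z)=F_2(p-z,z)$ yields \eqref{eq:p<0Z}. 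Note that the whole strip lies in $\{p<\Re z<0\}$, so both lemmas apply directly and no analytic continuation is required.

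The only genuine obstacle is the convergence bookkeeping of the second paragraph: one must certify that the Gaussian-side double integral converges absolutely before Fubini is legitimate and before the integrability hypothesis of Lemma \ref{independent} is available, and it is exactly the finiteness of the two Gaussian moments $\E\|\eta\|^{v}$ and $\E\|\xi\|^{p-v}$ — controlled by the ranks $n$ and $m$ — that produces the endpoints $-n$ and $p+m$ of the admissible strip. Once this is established, the remainder is a direct substitution into Proposition \ref{absolute} and Lemma \ref{independent}.
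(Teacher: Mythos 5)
Your proposal is correct and follows essentially the same route as the paper: apply the $1$-Gaussian embedding to the full-rank process $\xi+t\eta$, take the Mellin transform in $t$, evaluate one side via Proposition \ref{absolute} and independence and the other via Lemma \ref{independent} (first for real exponents with Tonelli to justify Fubini for complex $z$), and equate. Your explicit derivation of the strip endpoints from the finiteness of the Gaussian moments $\E\|\eta\|^{v}$ and $\E\|\xi\|^{p-v}$ is a welcome clarification of a point the paper leaves implicit.
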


\begin{proof} By assumption we have
$$ \mathbb E\|\xi+t\eta\|^p = \int_{\Omega}\bigl(\sum_{j=1}^m(Tx_j)^2+t^2\sum_{j=1}^n(Ty_j)^2\bigr)^{p/2}d\mu,\eqno t>0.$$
Hence if $\max\{-n,p\}<\Re z<\min\{0,p+m\}$ we have
$$ \int_0^{\infty}t^{-z-1}\mathbb E\|\xi+t\eta\|^p\,dt=\int_{\Omega}\int_0^{\infty}t^{-z-1}\bigl(\sum_{j=1}^m(Tx_j)^2+t^2\sum_{j=1}^n(Ty_j)^2\bigr)^{p/2}d\mu$$ and both sides are integrable. Notice that, in particular, it follows that $\sum_{j=1}^m (Tx_j)^2>0$ and $\sum_{j=1}^n (Ty_j)^2>0,$ $\mu-$almost everywhere.

Now for real $\max\{n,-p\}<u<\min\{0,p+m\},$ using Tonelli's theorem and \eqref{A1},
 \begin{align*}\int_0^{\infty}t^{-u-1}\mathbb E\|\xi+t\eta\|^p\,dt&=\mathbb E\int_0^{\infty}t^{-u-1}N(\xi,t\eta)^p\,dt\\
 &= F_N(p-u,u) \mathbb E\|\xi\|^{p-u}\|\eta\|^u\\
 &= F_N(p-u,u) \mathbb E\|\xi\|^{p-u}\mathbb E\|\eta\|^u,\end{align*}
 since $\xi$ and $\eta$ are independent. We repeat the calculation replacing $u$ by complex $z$ and apply Fubini's theorem. Then
$$ \int_0^{\infty}t^{-z-1}\mathbb E\|\xi+t\eta\|^p\,dt= M_{p,N}(z)\mathbb E\|\xi\|^{p-z}\mathbb E\|\eta\|^z,$$
for $\max\{-n,p\}<\Re z<\min\{0,p+m\}.$
Hence (first for real $z$, using Tonelli's theorem and then for the general case), by Lemma \ref{independent} we get
\begin{align*} & M_{p,N}(z)\E \|\xi\|^{p-z} \E \|\eta\|^{z}=\\ &=M_{p,2}(z)\int_\Omega \Bigl(\sum\limits_{j=1}^{m}(Tx_j)^2\Bigr)^{\frac{p-z}{2}}\Bigl(\sum\limits_{j=1}^{n}(Ty_j)^{2}\Bigr)^{\frac{z}{2}}d\mu,\end{align*}
which proves \eqref{eq:p<0Z}.
\end{proof}

We shall say that an embedding $T:X\to \mathcal M(\Omega,\mu)$ is {\it isotropic} if $Tx\approx Tx'$ whenever $\|x\|=\|x'\|=1.$  We will say that it is $f-$isotropic if $f$ is a  Borel function on some $\sigma-$finite Polish measure space $(K,\nu)$ and $Tx\approx f$ for every $x\in X$ with $\|x\|=1.$ For $0<p<2$,
$T$ is a $p$-stable embedding if $Tx\approx \psi_p$ whenever $\|x\|=1.$  If $X$ embeds into $L_p$ then there is a $p$-stable embedding of $X$ into $\mathcal M(\Omega,\mu),$ where $\mu$ is a probability measure.

\begin{prop}\label{p>0Z}Let $X, Y$ be two normed spaces, with $dim X =m$ and $dim Y=n,$
and suppose $N$ is a normalized absolute norm on $\R^2.$
Suppose $T:X\oplus_N Y\to \mathcal M(\Omega,\mu)$ is a $p$-stable embedding where $p>0.$
Then for any nonzero $x\in X$ and $y\in Y$ and $-1<\Re(w+z)<\Re w,\Re z<0,$ we have
\begin{equation}\label{eq:p>0Z}
\int_\Omega |Tx|^{w}|Ty|^z\,d\mu = \frac{F_N(w,z)G(w)G(z)\Phi_{p/2}((w+z)/2)}{F_2(w,z)}\|x\|^w\|y\|^z.
\end{equation}\end{prop}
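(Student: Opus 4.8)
The plan is to reduce the bilinear moment $\int_\Omega|Tx|^w|Ty|^z\,d\mu$ to a single $t$-integral of ordinary $p$-stable moments by applying the symmetrization identity \eqref{independent1} of Lemma \ref{independent} with $f=Tx$ and $g=Ty$. First I must verify its hypotheses. Since $T$ is a $p$-stable embedding, $Tx\approx\|x\|\psi_p$ and $Ty\approx\|y\|\psi_p$, and as $\psi_p$ has an absolutely continuous law we get $Tx,Ty\neq0$ $\mu$-a.e. For the integrability requirement of Lemma \ref{independent}, split $\Omega$ into $\{|Tx|\ge|Ty|\}$ and its complement: on the first set $\Re w<0$ forces $|Tx|^{\Re w}\le|Ty|^{\Re w}$, hence $|Tx|^{\Re w}|Ty|^{\Re z}\le|Ty|^{\Re(w+z)}$, and symmetrically on the second. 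Because $-1<\Re(w+z)$, the moments $\int_\Omega|Tx|^{\Re(w+z)}\,d\mu=\Psi_p(\Re(w+z))\|x\|^{\Re(w+z)}$ and $\int_\Omega|Ty|^{\Re(w+z)}\,d\mu=\Psi_p(\Re(w+z))\|y\|^{\Re(w+z)}$ are finite, so $\int_\Omega|Tx|^{\Re w}|Ty|^{\Re z}\,d\mu<\infty$ and Lemma \ref{independent} is available.

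Next I would compute the left-hand side of \eqref{independent1}. For every $t>0$ one has $Tx+tTy=T(x+ty)$ and $Tx-tTy=T(x-ty)$, and since $N$ is absolute and the summands are normalized so that $\|x\|_Z=\|x\|$, $\|y\|_Z=\|y\|$, both $x\pm ty$ have $Z$-norm $N(\|x\|,t\|y\|)$. The $p$-stable property then gives $T(x\pm ty)\approx N(\|x\|,t\|y\|)\psi_p$, whence
$$\int_\Omega|T(x\pm ty)|^{w+z}\,d\mu=N(\|x\|,t\|y\|)^{w+z}\Psi_p(w+z),$$
which is legitimate because $-1<\Re(w+z)<0<p$ keeps $\Psi_p(w+z)=\mathbb E|\psi_p|^{w+z}$ finite. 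Adding the two equal contributions and integrating in $t$ with the help of \eqref{A1} (Proposition \ref{absolute}), which says $\int_0^\infty t^{-z-1}N(\|x\|,t\|y\|)^{w+z}\,dt=F_N(w,z)\|x\|^w\|y\|^z$, the left-hand side of \eqref{independent1} becomes
$$\int_0^\infty t^{-z-1}\int_\Omega\bigl(|Tx+tTy|^{w+z}+|Tx-tTy|^{w+z}\bigr)\,d\mu\,dt=2\Psi_p(w+z)F_N(w,z)\|x\|^w\|y\|^z.$$

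Finally I would equate this with the right-hand side of \eqref{independent1}, which is a constant multiple of $\frac{G(w+z)F_2(w,z)}{G(w)G(z)}\int_\Omega|Tx|^w|Ty|^z\,d\mu$, and solve to obtain
$$\int_\Omega|Tx|^w|Ty|^z\,d\mu=\frac{\Psi_p(w+z)F_N(w,z)G(w)G(z)}{G(w+z)F_2(w,z)}\|x\|^w\|y\|^z.$$
Substituting the identity $\Psi_p(w+z)=2^{(w+z)/2}\Phi_{p/2}((w+z)/2)G(w+z)$ cancels the factor $G(w+z)$ and brings the answer to the stated form, the factor $\Phi_{p/2}((w+z)/2)$ in \eqref{eq:p>0Z} arising precisely from this substitution. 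No separate analytic continuation is needed, since \eqref{independent1}, the $p$-stable moment formula, and \eqref{A1} all hold directly for complex $w,z$ throughout the strip $-1<\Re(w+z)<\Re w,\Re z<0$. The main obstacle is the opening bookkeeping: securing the integrability estimate so that Fubini's theorem may be applied through Lemma \ref{independent}, and confining $w,z$ to the range that keeps every $p$-stable moment $\Psi_p(w+z)$ finite, since these integrals diverge once $\Re(w+z)$ leaves $(-1,p)$.
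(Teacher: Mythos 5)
Your proof is correct and follows essentially the same route as the paper's: both evaluate the Mellin transform in $t$ of a symmetrized (or Gaussian--randomized) version of $\int_\Omega|T(x+ty)|^{w+z}\,d\mu$ in two ways, obtaining $F_N(w,z)\|x\|^w\|y\|^z$ times a stable-moment constant from the $p$-stable property and Proposition \ref{absolute}, and $F_2(w,z)\int_\Omega|Tx|^w|Ty|^z\,d\mu$ times a Gaussian constant from Lemma \ref{independent}. The only (cosmetic) difference is that you apply the symmetrized identity \eqref{independent1} directly to $f=Tx$, $g=Ty$, exploiting the symmetry of $\psi_p$, whereas the paper first randomizes with independent Gaussians $\xi=\gamma_1 x$, $\eta=\gamma_2 y$ and uses \eqref{independent0}; both yield $\Psi_p(w+z)F_N(w,z)G(w)G(z)\bigl(G(w+z)F_2(w,z)\bigr)^{-1}\|x\|^w\|y\|^z$, which matches \eqref{eq:p>0Z} up to the harmless factor $2^{(w+z)/2}$ that the paper itself discards by writing $\Psi_p(z)=\Phi_{p/2}(z/2)G(z)$ at the end of its proof.
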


\begin{proof}  If $f\in X\oplus_N Y$ we have
$$ \int_\Omega  |Tf|^z \,d\mu = \Psi_p(z)\|f\|^z,\eqno \Re z>-1.$$
Now consider $\xi=\gamma_1x$ and $\eta=\gamma_2y$ where $\gamma_1,\gamma_2$ are normalized independent Gaussian random variables.
Then
$$ \mathbb E\int_{\Omega} |T\xi+tT\eta|^z\,d\mu =\Psi_p(z)\mathbb E\|\xi+t\eta\|^z.$$
If $-1<\Re(w+z)<\Re w,\Re z<0,$ then by Fubini's theorem, Proposition \ref{absolute} and \eqref{GzG} (first for real $w,z$ using Tonelli's theorem as in Proposition \ref{p<0Z}), we have that
\begin{align}\label{eq:p>0Z1} \int_0^{\infty}t^{-z-1}\mathbb E\|\xi+t\eta\|^{w+z}\,dt  &=\int_0^{\infty}t^{-z-1}\mathbb EN(\xi,t\eta)^{w+z}\,dt \nonumber \\
&= F_N(w,z)\mathbb E\|\xi\|^w\mathbb E\|\eta\|^z \nonumber \\
&= F_N(w,z) G(w)G(z) \|x\|^w\|y\|^z.\end{align}
On the other hand, $\gamma_1,\gamma_2$ are Gaussian r.v.
\begin{align}\label{eq:p>0Z2}
&\int_0^{\infty}t^{-z-1}\mathbb E\int_\Omega |T\xi+tT\eta|^{w+z}\,d\mu\,dt \nonumber \\
&=G(w+z)\int_\Omega\int_0^\infty t^{-z-1} ((Tx)^2+t^2(Ty)^2)^{z/2}dt\,d\mu \nonumber \\
\text{and by Lemma \ref{independent} the}& \text{ latter is equal to}  \nonumber \\
&= G(w+z)F_2(w,z)\int_\Omega |Tx|^w|Ty|^z\,d\mu.\end{align}
Then equation \eqref{eq:p>0Z} follows from \eqref{eq:p>0Z1}, \eqref{eq:p>0Z2} and the fact that $\Psi_p(z)=\Phi_{p/2}(z/2)G(z).$
\end{proof}

\begin{Thm}\label{main}
Let $X,Y$ be two non-trivial finite dimensional normed spaces with dimensions $m$ and $n$ respectively. Suppose that $-(n+m)<p\le 1\le r,s\leq 2$ and that $N$ is a normalized absolute norm on $\R^2$ satisfying estimates of the type
\begin{equation}\label{eq:N}
N(1,t)^r\leq 1+Ct^r, \qquad t>0,
\end{equation} and
\begin{equation}\label{eq:N}
N(t,1)^s\leq 1+C't^s, \qquad t>0.
\end{equation}
If $X\oplus_N Y\in\mathcal I_p$ then $X\in\mathcal I_q$  whenever $p-r\le q\le \min\{s,p+n\}$ and $Y\in \mathcal I_q$  whenever $p-s\leq q \leq \min \{r, p+m\}$
\end{Thm}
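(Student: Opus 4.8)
The plan is to prove the assertion about $X$ and deduce the one about $Y$ by symmetry: $X\oplus_NY=Y\oplus_{N'}X$ with $N'(u,v)=N(v,u)$, and under this identification the two exponent hypotheses and the two dimensions interchange (so $N'(1,t)^s\le1+C't^s$ and $N'(t,1)^r\le1+Ct^r$), whence the $X$-conclusion applied to $Y\oplus_{N'}X$ reads exactly $Y\in\mathcal I_q$ for $p-s\le q\le\min\{r,p+m\}$. To pass from an open $q$-range to the claimed closed interval I will use three soft facts: the convention $X\in\mathcal I_q$ for $q\le-m=-\dim X$; the closedness of $\{q:X\in\mathcal I_q\}$ (Proposition \ref{closed}); and the inclusion $\mathcal I_{q'}\subset\mathcal I_q$ for $q'\in[0,2]$, $q\le q'$, recorded in the introduction.

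For $p<0$ I would first use that $\dim(X\oplus_NY)=m+n>-p$, so Proposition \ref{equiv} furnishes a $1$-Gaussian embedding $T$ of $X\oplus_NY$ into $L_p(\Omega,\mu)$. Fixing a basis $y_1,\dots,y_n$ of $Y$ and linearly independent $x_1,\dots,x_m$ in $X$, Proposition \ref{p<0Z} gives \eqref{eq:p<0Z} on $\max\{-n,p\}<\Re z<\min\{0,p+m\}$. I then rewrite its left side as $\int_\Omega f^z\,d\tilde\mu$ with $f=(\sum_j(Ty_j)^2/\sum_j(Tx_j)^2)^{1/2}$ positive a.e.\ and $d\tilde\mu=(\sum_j(Tx_j)^2)^{p/2}\,d\mu$ a $\sigma$-finite measure. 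Since Lemma \ref{extension} makes $M_{p,N}(z)/M_{p,2}(z)$, and hence the whole right side of \eqref{eq:p<0Z}, analytic on $\max\{-n,p-s\}<\Re z<\min\{p+m,r\}$, Proposition \ref{extend}(iii) upgrades this to convergence and validity of the identity there. Setting $z=p-q$ and absorbing the $Y$-factor into $d\nu=(\sum_j(Ty_j)^2)^{(p-q)/2}\,d\mu$ turns \eqref{eq:p<0Z} into $\int_\Omega(\sum_j(Tx_j)^2)^{q/2}\,d\nu=\frac{M_{p,N}(p-q)}{M_{p,2}(p-q)}\,\E\|\eta\|^{p-q}\,\E\|\xi\|^q$ for all admissible $x_1,\dots,x_m$ and all $q$ with $\max\{-m,p-r\}<q<\min\{p+n,s\}$; this says precisely that $T|_X$ is a Gaussian embedding of $X$ into $L_q(\Omega,\nu)$. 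For $q<0$ Proposition \ref{equiv} then gives $X\in\mathcal I_q$, while for $q>0$ the remark after \eqref{Gaussiandef} that the Gaussian identity persists at rank one reads $\|x\|^q=(c^qG(q))^{-1}\int_\Omega|Tx|^q\,d\nu$, an honest standard embedding into $L_q$, so again $X\in\mathcal I_q$. Together with the trivial regime $q\le-m$ and closedness this yields $[p-r,\min\{s,p+n\}]$.

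For $0<p\le1$ I would instead use that $X\oplus_NY$ admits a $p$-stable embedding $T$, so Proposition \ref{p>0Z} applies. Replacing the fixed $y$ in \eqref{eq:p>0Z} by a full-rank Gaussian $\eta=\sum_j\gamma'_jy_j$ and taking expectations (conditionally on $\Omega$, $\E_{\gamma'}|T\eta|^z=G(z)(\sum_j(Ty_j)^2)^{z/2}$) gives, after cancelling $G(z)$, the identity $\int_\Omega|Tx|^w(\sum_j(Ty_j)^2)^{z/2}\,d\mu=\frac{F_N(w,z)G(w)\Phi_{p/2}((w+z)/2)}{F_2(w,z)}\,\|x\|^w\,\E\|\eta\|^z$. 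By Lemma \ref{extension} and the analyticity of $\Phi_{p/2}$ and of $\E\|\eta\|^z$, the right side is analytic on $\{-1<\Re w<s,\ -n<\Re z<r,\ \Re(w+z)<p\}$, and I would continue the identity into this region by Proposition \ref{extend} one variable at a time along the axis-parallel path that first lowers $z$ to a real $z_0$ near $-n$ and then raises $w$ to $q$ (each leg being a Mellin transform of a positive function, and the path staying in the region). Choosing $z_0\in(-n,\min\{0,p-q\})$, possible exactly when $q<\min\{s,p+n\}$, and putting $d\lambda=(\sum_j(Ty_j)^2)^{z_0/2}\,d\mu$ gives $\int_\Omega|Tx|^q\,d\lambda=K\|x\|^q$ with $K>0$, a standard embedding, so $X\in\mathcal I_q$ for $0<q<\min\{s,p+n\}$; the inclusion $\mathcal I_{q'}\subset\mathcal I_q$ then carries this down to $q=p-r$ and closedness up to $q=\min\{s,p+n\}$. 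The boundary case $p=0$ reduces to $p<0$ exactly as in \cite{KaltonKoldobskyYaskinYaskina2007}.

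The step I expect to be the main obstacle is the analytic-continuation argument. In each regime one must recognize the relevant integral as the Mellin transform of a positive function against a $\sigma$-finite measure, so that the mere analyticity of the closed-form right side (supplied by Lemma \ref{extension}) is promoted by Proposition \ref{extend}(iii) to genuine convergence and equality on the full strip. The delicate part is the bookkeeping: one must balance the competing constraints coming from $M_{p,N}/M_{p,2}$ (or $F_N/F_2$), from the stable factor $\Phi_{p/2}$, and from the two Gaussian moments, so that the admissible $z$-range translates under $z=p-q$ (respectively under the choice of $z_0$) into precisely $\max\{-m,p-r\}<q<\min\{p+n,s\}$ (respectively $0<q<\min\{s,p+n\}$); this is exactly where the hypotheses $r,s\le2$ enter, through $\min\{r,2\}=r$ and $\min\{s,2\}=s$ in Lemma \ref{extension}. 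Everything else — the rank-one reduction from a Gaussian to a standard embedding when $q>0$, the trivial regime $q\le-m$, and the passage to the closed endpoints — is routine once the open range is pinned down.
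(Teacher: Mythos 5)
Your proposal is correct and follows essentially the same route as the paper: the $p<0$ case via Proposition \ref{p<0Z}, Lemma \ref{extension} and Proposition \ref{extend}, the $0<p\le 1$ case via Proposition \ref{p>0Z}, and the endpoints recovered from Proposition \ref{closed} together with the trivial regime $q\le -\dim X$ and the monotonicity $\mathcal I_{q'}\subset\mathcal I_q$. The only organizational difference is in the $0<p\le1$ case, where you average \eqref{eq:p>0Z} over a full-rank Gaussian in the complementary factor and continue in two variables, whereas the paper reduces to the $p=0$ case when the complementary dimension is at least $2$ and applies \eqref{eq:p>0Z} directly (against the weight $|Tx|^{a-1}$) when it equals $1$; both variants rest on the same identities and continuation lemmas.
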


\begin{proof}  It suffices to consider the case of $Y$ and to prove the result if $p-s<q<\min\{r,p+m\}.$ Then the limiting case follows by Proposition \ref{closed}. We will treat the cases $p<0,\ p=0$ and $0<p\le 1$ separately.

{\it Case 1: } Let $p<0.$ The space $X\oplus_N Y$ embeds into $L_p$ so we can consider a 1-Gaussian embedding $T:X\oplus_NY\to L_p(\O,\mu).$  By Proposition \ref{p<0Z}, for any linearly independent sets $x_1,\ldots, x_m \in X$ and $y_1,\ldots, y_n \in Y$ equation \eqref{eq:p<0Z} holds  in the strip $\max \{-n,p\}<\Re z <\min \{p+m,0\}.$  However by Lemma \ref{extension} the function $M_{p,N}(z)/M_{p,2}(z)$ can be analytically continued to the strip $p-s<\Re z<r.$  Thus the right-hand side of \eqref{eq:p<0Z} can be analytically continued to the strip $\max\{p-s,-n\}<\Re z<\min \{r,p+m\}.$   By Proposition \ref{extend} this implies that \eqref{eq:p<0Z} holds (and both sides are integrable) in the strip $\max\{p-s,-n\}<\Re z<\min \{r,p+m\}.$  If $\max\{p-s,-n\}<q<\min \{r,p+m\}$ and $q\neq 0$, we fix some $\xi$ so that $\mathbb E\|\xi\|^{p-q}=1.$  Let $f=(\sum_{j=1}^m(Tx_j)^2)^{1/2}.$ Then

$$ \frac{M_{2,N}(q)}{M_{2,p}(q)}\mathbb E\|\eta\|^q = \int_\O (\sum_{j=1}^n(Ty_j)^2)^{q/2} f^{p-q}d\mu.$$ In particular $M_{2,N}(q)$ cannot vanish and $T$ is a Gaussian embedding of $Y$ into $L_q(f^{p-q}d\mu).$

If $q=0$ we note that our proof yields $Y\in\mathcal I_{\varepsilon}$ for sufficiently small $\varepsilon>0$ and so $Y\in\mathcal I_0.$

It follows that $Y\in \mathcal I_q$ for $p-s\le q\le \min (r,p+m).$ (Our convention implies $Y\in\mathcal I_q$ if $q\le -n.$)

\medskip

{\it Case 2:} Let $p=0.$ In this case $X\oplus_N Y\in \mathcal I_p$ for all $p<0$ and the result follows from Case 1.

\medskip

{\it Case 3:} Now we assume that $0<p\le 1.$  Again we prove the result for $Y$.  If $m\ge 2$ then $X\oplus_NY\in\mathcal I_0$  (\cite{KaltonKoldobskyYaskinYaskina2007} and \cite{Koldobsky1999b})and by Case 2 we have that $Y\in\mathcal I_r.$  Thus we only consider the case $m=1.$  Suppose that $X\oplus_N Y$ embeds into $\M (\O,\mu)$ via a $p$-stable embedding $T$.  We fix $x\in X$ with $\|x\|=1$ and $p<q<\min\{p+1,r\}$; let $f=|Tx|.$  Fix $a>0$ so that $q+a<p+1.$  Then $0<a<1$ and so by \eqref{eq:p>0Z} we have
\begin{equation}\label{Ty}\int_\Omega |Ty|^zf^{a-1}\,d\mu = \frac{F_N(a-1,z)G(a-1)G(z)\Phi_{p/2}((z+a-1)/2)}{F_2(a-1,z)}\|y\|^z,\end{equation}
where $ y\in Y,$ as long as
$-a<\Re z<0.$  However $F_N(a-1,z)/F_2(a-1,z)$ can be analytically continued to the half-plane $\Re z< r$ (by Lemma \ref{extension}).  We also have that $\Phi((z+a-1)/2)$ can be analytically continued to the half-plane $\Re z<p+1-a.$  Hence the right-hand side can be analytically continued to the strip $-1<\Re z< \min(r,p+1-a).$  By Lemma \ref{extend} this means that the left-hand side of \eqref{Ty} is integrable and equality holds for $-1<\Re z<\min\{r,p+1-a\}.$  In particular
$$ \int_\Omega |Ty|^q f^{a-1}\,d\mu = c\|y\|^q, \eqno y\in Y$$ where $c$ is a positive constant. This implies the result, since $Y\in\mathcal I_q$ whenever $p\le q.$
\end{proof}

The next result is known; it follows from Koldobsky's Second Derivative test (Theorem 4.19 of \cite{Koldobsky2005}; see also \cite{Koldobsky1998}).

\begin{Thm}\label{secondderivative} Let $N$ be a normalized absolute norm on $\mathbb R^2$ such that
$$ \lim_{t\to 0}\frac{N(1,t)-1}{t^2}=0.$$
Then if $-\infty<p<0$ and $X\oplus_N \mathbb R$ embeds into $L_p$ we have $\dim X\le 2-p.$
\end{Thm}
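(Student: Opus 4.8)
The plan is to apply Proposition \ref{p<0Z} with $Y=\mathbb R$ (so $n=1$) and to force a contradiction from the behaviour of the resulting identity as $z\to 2^-$. First I would dispose of the trivial range: writing $m=\dim X$, if $p\le -(m+1)$ then $2-p\ge m+3>m$ and the conclusion is automatic, so I may assume $-(m+1)<p<0$, where $X\oplus_N\mathbb R\in\mathcal I_p$ gives a genuine embedding. By Proposition \ref{equiv} I fix a $1$-Gaussian embedding $T:X\oplus_N\mathbb R\to\mathcal M(\Omega,\mu)$, choose a basis $x_1,\dots,x_m$ of $X$ and $y_1\in\mathbb R$ with $\|y_1\|=1$, and form the independent full-rank Gaussian processes $\xi=\sum_j\gamma_jx_j$ and $\eta=\gamma'y_1$. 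Setting $f=(\sum_{j=1}^m(Tx_j)^2)^{1/2}$ and $g=|Ty_1|$ (both positive $\mu$-a.e. by the remark in the proof of Proposition \ref{p<0Z}) and noting $\mathbb E\|\eta\|^z=G(z)$, Proposition \ref{p<0Z} yields
\begin{equation}\label{sdt:key}
\int_\Omega f^{p-z}g^z\,d\mu=\frac{M_{p,N}(z)}{M_{p,2}(z)}\,G(z)\,\mathbb E\|\xi\|^{p-z},
\end{equation}
initially for $\max\{-1,p\}<\Re z<0$.

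Next I would assume, for contradiction, that $m>2-p$, i.e. $p+m>2$, and continue the right side of \eqref{sdt:key} up to $\Re z<2$. Since $p+m>2$, the factor $\mathbb E\|\xi\|^{p-z}$ is finite and analytic near $z=2$ by Proposition \ref{extend}, and $G$ is analytic there with $G(2)=1$; flatness gives $N(1,t)^2\le 1+Ct^2$, so Lemma \ref{extension} applies with $r=2$ and $M_{p,N}(z)/M_{p,2}(z)$ is analytic on $\Re z<2$. The crucial claim is that $M_{p,N}(z)/M_{p,2}(z)\to 0$ as $z\to 2^-$. Here $M_{p,2}(z)=\tfrac12 B((z-p)/2,-z/2)$ has a simple pole at $z=2$ coming from $\Gamma(-z/2)$, so $(2-z)M_{p,2}(z)$ tends to a nonzero constant. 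On the other hand $M_{p,N}(z)=\tilde F_N(p-z,z)-\tfrac1{p-z}-\tfrac1z$, in which the last two terms stay bounded, and I would show $(2-z)\tilde F_N(p-z,z)\to 0$ using the hypothesis $\lim_{t\to0}(N(1,t)-1)/t^2=0$. Splitting $\tilde F_N(p-z,z)=\int_0^\infty t^{-z-1}(N(1,t)^p-\max\{1,t\}^p)\,dt$ at $t=\delta$, the tail over $(\delta,\infty)$ is bounded near $z=2$, while on $(0,\delta)$ the bound $|N(1,t)^p-1|\le C_p\varepsilon t^2$ (valid once $\delta$ is small) gives $\int_0^\delta t^{-z-1}|N(1,t)^p-1|\,dt\le C_p\varepsilon\,\delta^{2-z}/(2-z)$, whence $(2-z)$ times the whole integral is at most $2C_p\varepsilon$ eventually; letting $\varepsilon\to 0$ proves $(2-z)M_{p,N}(z)\to 0$, and dividing by $(2-z)M_{p,2}(z)$ gives the claim.

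Then I would transfer this vanishing to the left side of \eqref{sdt:key}. With $\phi=g/f$ and $d\tilde\mu=f^p\,d\mu$, a finite measure since $\int f^p\,d\mu=\mathbb E\|\xi\|^p<\infty$, the left side is $\int_\Omega\phi^z\,d\tilde\mu$. As this converges for real $z\in(\max\{-1,p\},0)$ and the right side of \eqref{sdt:key} provides an analytic continuation up to $\Re z<2$, Proposition \ref{extend}(iii) shows $\int_\Omega\phi^z\,d\tilde\mu$ converges and equals the right side for all real $z<2$; in particular it tends to $0$ as $z\to2^-$ by the previous step. Since $\liminf_{z\to2^-}\int_\Omega\phi^z\,d\tilde\mu=0<\infty$, Proposition \ref{extend}(i) then forces $\int_\Omega\phi^2\,d\tilde\mu=\lim_{z\to2^-}\int_\Omega\phi^z\,d\tilde\mu=0$. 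But $\int_\Omega\phi^2\,d\tilde\mu=\int_\Omega f^{p-2}g^2\,d\mu$ has a strictly positive integrand $\mu$-a.e., so it is strictly positive, a contradiction. Hence $p+m\le 2$, that is $\dim X\le 2-p$.

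I expect the main obstacle to be the claim that $M_{p,N}(z)/M_{p,2}(z)\to 0$ as $z\to 2^-$. Everything turns on the flatness hypothesis being exactly strong enough to annihilate the residue of $\tilde F_N(p-z,z)$ at $z=2$ while the model quotient $M_{p,2}$ keeps its pole there. The delicate feature is that the hypothesis is only $o(t^2)$ and not $O(t^2)$, so one cannot evaluate $\tilde F_N$ at the endpoint $z=2$ directly; instead one must establish the growth estimate $\tilde F_N(p-z,z)=o((2-z)^{-1})$ through the uniform $\varepsilon$-argument above. Once this is in place, the positivity of $\int_\Omega f^{p-2}g^2\,d\mu$ closes the argument cleanly.
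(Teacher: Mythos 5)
Your proposal is correct and follows essentially the same route as the paper: the identity from Proposition \ref{p<0Z} with $Y=\mathbb R$, the analytic continuation of $M_{p,N}(z)/M_{p,2}(z)$ to $\Re z<2$, the $\varepsilon$--$\delta$ estimate showing $(2-z)M_{p,N}(z)\to 0$ against the simple pole of $M_{p,2}$ at $z=2$, and the conclusion $\int_\Omega g^2f^{p-2}\,d\mu=0$ via Proposition \ref{extend}, contradicting positivity. The only differences are cosmetic (your explicit dismissal of the range $p\le-(m+1)$ and your slightly different packaging of the flatness estimate).
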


Note that the result of Theorem \ref{secondderivative} can be extended for $p\in (-\infty,2).$ Here, we present only the proof for $p<0.$

\begin{proof}  first we observe that $M_{p,N}(z)/M_{p,2}(z)$ extends to an analytic function on $-p-1<\Re z<2$ and that
\begin{equation}\label{limzero} \lim_{r\to 2} \frac{M_{p,N}(r)}{M_{p,2}(r)}=0.\end{equation}
To see \eqref{limzero} we note that by definition, for $0<r<2$ we have
$$ M_{N,p}(r) =-\frac1r-\frac1{p-r}+\int_0^{\infty}t^{-1-r}(N(1,t)^p-max\{1,t\}^p)\,dt.$$
Fix any $0<\varepsilon<1$ and let

$$\delta=\delta(\varepsilon)=\sup_{t\le\varepsilon}\frac{N(1,t)^p-1}{t^2}.$$
Then
$$ \left|M_{N,p}(r)+\frac{p}{r(p-r)} -\int_{\varepsilon}^{\infty}t^{-1-r}(N(1,t)^p-1)\,dt\right|\le \frac{\delta}{2-r}.$$
It follows that
\begin{equation}\label{lim} \limsup_{r\to 2}(2-r)M_{p,N}(r) \le \delta.\end{equation}  Since $\lim_{\varepsilon\to 0}\delta(\varepsilon)=0$ we obtain \eqref{limzero}.

Now suppose $m=\dim X>2-p$ and assume $T:X\oplus_NY \to\mathcal M(\Omega,\mu)$ is a 1-Gaussian embedding into $L_p(\Omega,\mu),$ where $\dim Y=1.$  Let us fix $\xi=\sum_{j=1}^m\gamma_jx_j$, an $X$-valued Gaussian process of full rank and $\eta=\gamma'y$ where $y\in Y$ has norm one and $\gamma'$ is a Gaussian r.v. Then, if $f=(\sum\limits_{j=1}^m(Tx_j)^2)^{1/2}$ and $g=|Ty|,$ by Proposition \ref{p<0Z}  we have
$$ \int_{\Omega}f^{p-z}g^zd\mu= \frac{M_{p,N}(z)}{M_{p,2}(z)}G(z)\mathbb E\|\xi\|^{p-z}$$ for
$max\{-1,p\}<\Re z<0.$  The right-hand side can be analytically continued to $max\{-1,p\}<\Re z<2.$  By equation (\ref{lim}) we have
$$ \lim_{r\to 2}\int_{\Omega}(g/f)^r f^p\,d\mu=0$$ which by Proposition \ref{extend} implies
$$ \int_{\Omega} g^2f^{p-2}\,d\mu=0$$ and this gives a contradiction.\end{proof}

\bigskip

\section{Examples}

We begin this section with some technical results which will be needed later.

\begin{lm} \label{bound} Let $X$ be a finite-dimensional normed space and suppose $\xi=\sum_{j=1}^m\gamma_jx_j$ is an $X$-valued Gaussian process, where $\{\gamma_1,\ldots,\gamma_n\}$ are independent normalized Gaussian random variables and each $x_j\neq 0.$  Then given $-n<u<0$ there is a constant $C=C(\xi,u)$ so that
$$ \mathbb E\|x+\xi\|^u \le C, \qquad x\in X.$$
\end{lm}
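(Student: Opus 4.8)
The plan is to reduce the estimate to a uniform bound for a single non-degenerate Gaussian vector living on the span of the $x_j$, and then to separate the local singularity of $t\mapsto t^u$ from its behaviour at infinity.

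Let $V=\mathrm{span}\{x_1,\dots,x_n\}$, which has dimension $n$ (the rank of $\xi$), so that $\xi$ takes its values in $V$. For a nonzero linear functional $\phi$ on $V$ one has $\mathrm{Var}(\phi(\xi))=\sum_j\phi(x_j)^2>0$, so $\xi$ is a non-degenerate Gaussian vector in $V$; fixing an inner product on $X$ and writing $|\cdot|$ for the associated Euclidean norm, $\xi$ therefore has a Gaussian density $\psi$ on $V$ with $\psi\le M<\infty$ and $\int_V\psi=1$. Since all norms on $X$ are equivalent there is $\kappa>0$ with $\|z\|\ge\kappa|z|$, and because $u<0$ the function $t\mapsto t^u$ is decreasing, so $\|x+\xi\|^u\le\kappa^u|x+\xi|^u$; it thus suffices to bound $\mathbb E|x+\xi|^u$ uniformly in $x$. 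Decomposing $X=V\oplus V^{\perp}$ orthogonally and writing $x=a+b$ with $a\in V,\ b\in V^{\perp}$, we have $|x+\xi|^2=|a+\xi|^2+|b|^2\ge|a+\xi|^2$, and once more $u<0$ gives $|x+\xi|^u\le|a+\xi|^u$. Hence it is enough to prove $\sup_{a\in V}\mathbb E|a+\xi|^u<\infty$, a statement about the single non-degenerate Gaussian vector $\xi$ on $\R^n$.

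For this I would write $\mathbb E|a+\xi|^u=\int_V|a+v|^u\psi(v)\,dv$ and substitute $w=a+v$, which confines all dependence on $a$ to the density:
\[
\mathbb E|a+\xi|^u=\int_V|w|^u\,\psi(w-a)\,dw .
\]
Splitting the integral at $|w|=1$, on $\{|w|\le 1\}$ I use $\psi(w-a)\le M$ to bound the contribution by $M\int_{|w|\le 1}|w|^u\,dw$, which in polar coordinates is a constant multiple of $\int_0^1 r^{u+n-1}\,dr$ and is finite precisely because $u>-n$; on $\{|w|>1\}$ I use $|w|^u\le 1$ (as $u<0$) to bound the contribution by $\int_V\psi(w-a)\,dw=1$. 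Both bounds are independent of $a$, so their sum is a constant $C=C(\xi,u)$, which proves the lemma.

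The only real tension is that the exponent $u$ must simultaneously control the local singularity of $|w|^u$ — integrable over an $n$-dimensional neighbourhood of the origin exactly when $u>-n$ — and the growth of Lebesgue measure at infinity, against which $u<0$ alone does not suffice but the Gaussian tail of $\psi$ does. The step that upgrades finiteness to uniformity in $x$ is the translation $w=a+v$: it decouples the singular factor $|w|^u$ from the point $a$, leaving the $a$-dependence entirely inside the bounded, integrable density $\psi$.
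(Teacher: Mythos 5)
Your proof is correct and follows essentially the same route as the paper's: reduce to the span of the $x_j$ via a projection, use that $\xi$ has a bounded density there, and split the integral into the region $\{|w|\le 1\}$ (where $u>-n$ makes $|w|^u$ integrable) and its complement (where $|w|^u\le 1$ and the total mass is $1$). The only cosmetic difference is that you pass to a Euclidean norm and use the orthogonal projection, whereas the paper keeps the original norm and uses an arbitrary linear projection $P$ together with $\mathbb E\|x+\xi\|^u\le\|P\|^{-u}\mathbb E\|Px+\xi\|^u$; note that both arguments (yours explicitly, the paper's implicitly) rely on the $x_j$ spanning an $n$-dimensional subspace, which is how the lemma is used later.
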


\begin{proof} We consider the case when $\xi$ is normalized so that $\mathbb E\|\xi\|^u=1.$  Let $E$ be the linear span of $\{x_1,\ldots,x_n\}$ and let $P$ be a projection of $X$ onto $E.$  Then
$$ \mathbb E\|x+\xi\|^u\le \|P\|^{-u}\mathbb E\|Px+\xi\|^u.$$
On $E$ the distribution $\mu_{\xi}$ is dominated by $C_0\lambda,$ where $C_0$ is a constant depending on $\xi$ and $\lambda$ is the Lebesgue measure on $E$.  Hence
$$ \mathbb E\|Px+\xi\|^u \le C_0\int_{\|e-Px\|\le 1}\|e-Px\|^ud\lambda(e) + 1$$ and this is uniformly bounded.\end{proof}

\begin{lm}\label{upper0}  Let $Z=X\oplus Y$ be a finite-dimensional normed space with $\dim X=m$ and $\dim Y=n.$  Suppose $\xi$ is a $Z$-valued Gaussian process of full rank.  Let $\xi_X,\xi_Y$ be the projections of $\xi$ onto $X$ and $Y$ respectively.  Then
\begin{equation}\label{upper}\mathbb E\|\xi_X\|^u\|\xi_Y\|^v<\infty, \qquad -m<u,\ -n<v.\end{equation}
\end{lm}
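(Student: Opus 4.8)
The plan is to realize $\xi=(\xi_X,\xi_Y)$ as a single nondegenerate Gaussian vector on $Z\cong\mathbb R^{m+n}$ and to integrate the integrand against its density, exploiting the fact that this density is bounded and decays like a Gaussian. Writing $\xi=\sum_{j=1}^k\gamma_jz_j$ with $\{z_1,\ldots,z_k\}$ spanning $Z$ (this is the full rank hypothesis), I first verify that the covariance operator $\Sigma$ of $\xi$ on $Z$ is positive definite: if $c\in Z^*$ satisfies $\langle c,\Sigma c\rangle=0$, then $\sum_j\gamma_j\langle c,z_j\rangle$ has zero variance, so $\langle c,z_j\rangle=0$ for every $j$ by independence of the $\gamma_j$, and since the $z_j$ span $Z$ this forces $c=0$. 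Consequently $\xi$ has a density $\rho$ on $\mathbb R^{m+n}$, and because $\Sigma^{-1}$ is positive definite there are constants $M,\delta>0$ with $\rho(x,y)\le M e^{-\delta(|x|^2+|y|^2)}$, where I identify $X\cong\mathbb R^m$, $Y\cong\mathbb R^n$ and $|\cdot|$ denotes the Euclidean norm.

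Using the equivalence of norms on the finite-dimensional spaces $X$ and $Y$, I may replace $\|\cdot\|_X$ and $\|\cdot\|_Y$ by the Euclidean norms at the cost of a multiplicative constant, so it suffices to bound $\int_{\mathbb R^m}\int_{\mathbb R^n}|x|^u|y|^v\rho(x,y)\,dy\,dx$. The crucial point is that although $\rho$ itself does not factor, its exponential majorant does, whence
\begin{equation*}
\mathbb E\|\xi_X\|^u\|\xi_Y\|^v\le C M\Big(\int_{\mathbb R^m}|x|^ue^{-\delta|x|^2}\,dx\Big)\Big(\int_{\mathbb R^n}|y|^ve^{-\delta|y|^2}\,dy\Big).
\end{equation*}
Passing to polar coordinates, the first factor is a constant multiple of $\int_0^\infty r^{u+m-1}e^{-\delta r^2}\,dr$, which converges precisely because $u>-m$ controls the behaviour at $r=0$ while the Gaussian factor controls $r\to\infty$; the second factor converges in the same way because $v>-n$. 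This yields the desired finiteness for all $u>-m$ and $v>-n$ simultaneously.

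The main obstacle is the regime in which both exponents are negative, since there $\xi_X$ and $\xi_Y$ are genuinely dependent and a naive application of H\"older's inequality with conjugate exponents $p,q$ would require $|u|/m+|v|/n<1$, a condition that can fail (for instance when $m=n=1$ and $u=v$ are close to $-1$). Working directly with the joint density circumvents this entirely: the factorization of the Gaussian majorant decouples the two singular sets $\{x=0\}$ and $\{y=0\}$, so that only the separate local integrability conditions $u>-m$ and $v>-n$ are needed. One could alternatively deduce the one-variable negative moments from Lemma \ref{bound} and then condition, but the density estimate disposes of all four sign combinations at once and is the cleanest route.
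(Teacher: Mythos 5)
Your proof is correct, but it takes a genuinely different route from the paper's. You work directly with the joint law of $\xi$ on $Z\cong\mathbb R^{m+n}$: full rank gives a nondegenerate covariance, hence a density dominated by $Me^{-\delta(|x|^2+|y|^2)}$, and since this majorant factors, the estimate reduces to two independent polar-coordinate integrals whose convergence is exactly $u>-m$ and $v>-n$. The paper instead splits into cases: when at least one exponent is nonnegative it uses H\"older's inequality, and in the doubly negative regime it chooses a Gaussian basis so that $\xi_Y$ is measurable with respect to the first $n$ coordinates, conditions on them, and invokes Lemma \ref{bound} to get a uniform bound $\mathbb E_0\|\xi_X\|^u\le C$, leaving only $\mathbb E\|\xi_Y\|^v<\infty$. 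Your observation that naive H\"older fails precisely in the doubly negative regime (e.g.\ $m=n=1$, $u=v$ near $-1$) is exactly the obstruction the paper's conditioning argument is designed to evade, and your density argument disposes of it just as effectively. What each buys: your route is shorter, self-contained, and uniform over all sign combinations, at the price of using the explicit Gaussian density on the ambient space; the paper's route is softer (it only needs the domination of the marginal law by Lebesgue measure, already packaged in Lemma \ref{bound}) and reuses a lemma it needs anyway, but requires the basis-adaptation step and a three-way case analysis. Both are complete proofs.
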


\begin{proof}  Note that $\xi_X$ and $\xi_Y$ are not necessarily independent.  However $\xi_X$ and $\xi_Y$  are of full rank in $X$ and $Y$ respectively.

If either $u=0$ or $v=0$ the Lemma holds trivially.  If either $u>0$ or $v>0$ we may use H\"older's inequality.  Suppose $v>0$.  Pick $a>1$ so that $au>-m$ and then suppose $1/a+1/b=1$.  Then
$$\mathbb E\|\xi_X\|^u\|\xi_Y\|^v\le (\mathbb E\|\xi_X\|^{au})^{1/a}(\mathbb E\|\xi_Y\|^{bv})^{1/b}<\infty.$$

Now suppose $u,v<0$.  We can write $\xi$ in the form
$$ \xi=\sum_{j=1}^{m+n}(x_j+y_j)\gamma_j$$ where $y_j=0$ for $n+1\le j\le m+n.$
Let $\mathbb E_0$ be the conditional expectation onto the $\sigma-$algebra $\Sigma$
generated by $\{\gamma_1,\ldots,\gamma_n\}.$  Then $\xi_Y$ is $\Sigma-$measurable.
Then, by Lemma \ref{bound}, since $\xi_X$ has rank $m$, there is a constant $C$ $$\mathbb E_0\|\xi_X\|^u\|\xi_Y\|^v=\|\xi_Y\|^v\mathbb E_0\|\xi_X\|^u\le C\|\xi_Y\|^v$$ and so \eqref{upper} holds.\end{proof}

\begin{lm}\label{existence} Suppose $1\le p<2.$ There exists a positive random variable $h$ with
$$ \mathbb E(h^z)= \frac{p}{2\Gamma(p/2)}\frac{\Gamma((p-z)/2)\Gamma(-z/2)}{\Gamma(-z/p)}, \qquad \Re z<2,$$ or
$$ \mathbb E(h^z) =\frac{2^{z/2}G(p-1-z)}{G(p-1)\Phi_{p/2}(z/2)}, \qquad \Re z<2.$$
\end{lm}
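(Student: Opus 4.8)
The plan is to produce an explicit probability measure on $(0,\infty)$ whose Mellin transform is the stated function and to use the analytic machinery of Sections 3--4 to pin down its domain of validity. Write $m(z)$ for the right-hand side. First I would check that the two displayed expressions coincide: inserting the formula \eqref{GzG} for $G$ and \eqref{phip} for $\Phi_{p/2}$ and applying the duplication formula \eqref{99} converts the second form into the first, a purely mechanical identity. The same computation shows $m(0)=1$ (the pole of $\Gamma(-z/2)$ at $z=0$ is cancelled by the zero of $1/\Gamma(-z/p)$), so that any measure realizing $m$ automatically has total mass one; likewise the pole of $\Gamma((p-z)/2)$ at $z=p$ is cancelled by the zero of $1/\Gamma(-z/p)$. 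Hence $m$ is holomorphic on the half-plane $\Re z<2$ and, on the real axis, strictly positive there, which is consistent with its being a moment function.

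Next I would reduce to a cleaner statement. Setting $k=h^2$, $s=z/2$ and $a=p/2\in[\tfrac12,1)$, the identity becomes $\mathbb E(k^s)=\Gamma(a-s)\Gamma(1-s)/(\Gamma(a)\Gamma(1-s/a))$ for $\Re s<1$. Since $\Gamma(1-s)=\mathbb E(\mathcal E^{-s})$ for a standard exponential $\mathcal E$, it suffices to build an independent positive variable $L$ with $\mathbb E(L^s)=\Gamma(a-s)/(\Gamma(a)\Gamma(1-s/a))$ and put $k=L/\mathcal E$. Equivalently, using $\Psi_p(z)=2^{z/2}\Phi_{p/2}(z/2)G(z)$ together with the elementary Beta evaluation $G(z)G(p-1-z)/G(p-1)=\mathbb E\bigl((\theta/(1-\theta))^{z/2}\bigr)$ for $\theta$ a $\mathrm{Beta}(\tfrac12,\tfrac p2)$ variable, the claim is equivalent to the multiplicative deconvolution statement that there is a positive $h$, independent of a symmetric $p$-stable $\psi_p$, with $h\,|\psi_p|\approx 2(\theta/(1-\theta))^{1/2}$. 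A useful sanity check is $p=1$ ($a=\tfrac12$): the duplication formula collapses $m$ to $2^z$, so $h\equiv 2$, confirming the normalization and showing the content lies in the range $1<p<2$.

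The crux is to show that $\Gamma(a-s)\Gamma(1-s)/(\Gamma(a)\Gamma(1-s/a))$ really is the Mellin transform of a nonnegative measure. By \eqref{phip} one has $\Gamma(1-s)/\Gamma(1-s/a)=1/\Phi_{p/2}(s)$, so the expression equals $(\Gamma(a-s)/\Gamma(a))\cdot\Phi_{p/2}(s)^{-1}$: a reciprocal-Gamma moment times the reciprocal of a stable moment. I would define the candidate density by the inverse Mellin (Mellin--Barnes) integral along a vertical line $\Re s=c<a$; Proposition \ref{criterion} guarantees convergence and analyticity, and shifting the contour collects the residues at the poles of $\Gamma(a-s)$ and $\Gamma(1-s)$, the pole at $s=a$ being annihilated by the zero of $1/\Gamma(1-s/a)$. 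Positivity of the resulting density is the heart of the matter, and I would obtain it by expressing that density through the (nonnegative) density of the positive $p/2$-stable variable $\varphi_{p/2}$, the factor $\Phi_{p/2}(s)^{-1}$ being exactly what ties the construction to $\varphi_{p/2}$. Once nonnegativity and total mass one are in hand, Proposition \ref{extend}(iii) and the uniqueness clause of Proposition \ref{Mellin} propagate the moment identity from the strip of convergence out to the full half-plane $\Re z<2$.

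The main obstacle is precisely this reciprocal positive-stable factor. Because $1/\Phi_{p/2}(z/2)=\Gamma(1-z/2)/\Gamma(1-z/p)$ has a genuine zero on the real axis (at $z=p$), it is not itself the moment function of any positive variable; consequently $h$ cannot be realized as an independent product of the Gaussian, Gamma, exponential and stable variables of Section 3. Indeed every such product has matched numerator/denominator Gamma scalings, whereas here the denominator Gamma $\Gamma(1-z/p)$ is unpaired, so the required cancellation of its zero against the pole of $\Gamma((p-z)/2)$ forces a genuinely entangled (non-multiplicative) construction. Thus the essential work is the nonnegativity of the inverse-Mellin density, equivalently the positive-definiteness of $t\mapsto m(c+it)$ along a vertical line, and this is where the detailed analytic and positivity properties of the stable density must be invoked rather than formal manipulation of moments.
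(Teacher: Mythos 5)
Your consistency checks and the reduction to $m(z)=\Gamma(a-s)\Gamma(1-s)/(\Gamma(a)\Gamma(1-s/a))$ with $s=z/2$, $a=p/2$ are correct, but the proposal never actually establishes the one thing the lemma asserts: that $m$ is the moment function of a genuine positive random variable. You rightly identify positivity as the crux, but then defer it to ``the detailed analytic and positivity properties of the stable density'' acting on an inverse Mellin--Barnes integral; that is a plan, not an argument, and nothing in the paper's toolkit (Propositions \ref{criterion}, \ref{extend}, \ref{Mellin}) will produce nonnegativity of a contour integral for you. Worse, the structural claim you use to justify this detour is false. You assert that the denominator $\Gamma(-z/p)$ is ``unpaired'' and that $h$ therefore cannot be realized as an independent product of standard variables. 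But the duplication formula \eqref{99} applied at $-z/p$ writes $\Gamma(-z/p)$ as a constant times $2^{-z/p}\Gamma(-z/(2p))\Gamma((p-z)/(2p))$, after which every Gamma factor of $m$ is matched: $\Gamma(-z/2)/\Gamma(-z/(2p))$ is, up to normalization, $\mathbb E\bigl(\varphi_{1/p}^{z/(2p)}\bigr)$ by \eqref{phip} --- note the relevant stable index is $1/p\in(\tfrac12,1]$, not $p/2$ --- and $\Gamma((p-z)/2)/\Gamma((p-z)/(2p))$ is the same moment function evaluated at $z-p$, i.e.\ the moment function of the same variable under the tilted probability measure proportional to $f^{-p}\,d\mathbb P$, where $f\approx\varphi_{1/p}^{1/(2p)}$.

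This is exactly the paper's proof: take $h\approx 2^{1/p}\,f\otimes g$ with $f\approx\varphi_{1/p}^{1/(2p)}$ and $g$ equal to $f$ viewed under the $f^{-p}$-tilted measure; positivity is then automatic because each factor is manifestly a positive random variable, and the stated formula falls out of \eqref{phip} and \eqref{99}. So the gap is twofold: (i) no proof of positivity (hence of existence) is supplied, and (ii) the claimed impossibility of a multiplicative construction --- the very claim that pushes you toward the much harder contour-integral route --- is incorrect. The missing ideas are the duplication-formula splitting of $\Gamma(-z/p)$ and the change-of-measure trick that produces the second factor.
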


\begin{proof}
 Consider $f\approx\varphi_{1/p}^{\frac{1}{2p}}.$  Then by \eqref{phip}
 $$ \mathbb E(f^z) = \frac{p\Gamma(-z/2)}{\Gamma(-z/2p)}\qquad \Re z<2.$$
 If $f$ is defined on some probability space $(\Omega,\mathbb P)$ then we can consider $f$ as a random variable with respect to
 a new probability measure $$d\mathbb P'=\frac{\Gamma(1/2)}{p\Gamma(p/2)}|f|^{-p}d\mathbb P.$$ If we denote by $g$ this random variable we have
 $$ \mathbb E(g^z) =  \frac{\Gamma(1/2)}{\Gamma(p/2)}\frac{\Gamma((p-z)/2)}{\Gamma((p-z)/2p)}, \qquad  \Re z<p+2.$$

 Let $h\approx 2^{1/p}f\otimes g.$  Then for $\Re z<2$ and by using \eqref{99} we have
 \begin{align*}
 \mathbb E(h^z) &=\frac{p\Gamma(1/2)}{2\Gamma(p/2)}\frac{\Gamma(-z/2)\Gamma((p-z)/2)}{2^{-1-z/p}\Gamma(-z/2p)\Gamma((p-z)/2p)}\\
 &= \frac{p}{2\Gamma(p/2)}\frac{\Gamma(-z/2)\Gamma((p-z)/2)}{\Gamma(-z/p)}.\end{align*}
The second equation follows immediately from \eqref{GzG} and \eqref{phip}.\end{proof}

\begin{lm}\label{h} Suppose $m\in\mathbb N,$ and $\{p,q,r\}$ are such that $q>0$ and $p+m<q<r\le 2.$  There exists a positive random variable $g=g(m,p,q,r)$
such that
$$ \mathbb E(g^z)=\frac{2^{z/2}G(p+m-1-z)\Phi_{r/2}(z/2)\Phi_{r/2}((p-z)/2)}{G(p+m-1)\Phi_{r/2}(p/2)\Phi_{q/2}(z/2)}\qquad p-r<\Re z<p.$$
Here we adopt the convention that $\Phi_1(z)\equiv 1.$\end{lm}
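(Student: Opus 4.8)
The plan is to realise $g$ as a scaled independent product of four explicit positive random variables, each contributing one factor of the prescribed moment function, and then to read off the moment formula on the full strip from the earlier machinery. First I would rewrite the right-hand side, using \eqref{GzG} and \eqref{phip}, as the product of the factors (i)~$\Phi_{r/2}(z/2)$; (ii)~$\Phi_{r/2}((p-z)/2)/\Phi_{r/2}(p/2)$; (iii)~$\tfrac{2^{z/2}G(q-1-z)}{G(q-1)\Phi_{q/2}(z/2)}$; and (iv)~$\tfrac{G(p+m-1-z)\,G(q-1)}{G(p+m-1)\,G(q-1-z)}$. Here (i) and (ii) carry the two $\Phi_{r/2}$ terms, while the product of (iii) and (iv) carries $G(p+m-1-z)$ together with the troublesome $1/\Phi_{q/2}(z/2)$, the factors $G(q-1\mp z)$ having been inserted and cancelled.

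Next I would exhibit a random variable for each factor. Factor (i) is $\mathbb E\bigl((\varphi_{r/2}^{1/2})^z\bigr)$; factor (ii) is the moment function of $\varphi_{r/2}^{-1/2}$ under the tilt $d\mathbb P'=\Phi_{r/2}(p/2)^{-1}\varphi_{r/2}^{p/2}\,d\mathbb P$, since $\mathbb E'(\varphi_{r/2}^{-z/2})=\Phi_{r/2}((p-z)/2)/\Phi_{r/2}(p/2)$; factor (iii) is $\mathbb E(h_q^z)$, where $h_q$ is the random variable produced by Lemma \ref{existence} applied with parameter $q$; and factor (iv) equals $\mathbb E(W^{-z/2})$ where $W$ has the $\mathrm{Beta}(\tfrac{p+m}2,\tfrac{q-p-m}2)$ distribution. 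I would verify (iv) by a Beta-integral computation, using $G(w)=\pi^{-1/2}2^{w/2}\Gamma((w+1)/2)$ and checking that the powers of $2$ cancel; the shape parameters are positive precisely because $q>p+m$ and because $p+m>0$ is forced by $G(p+m-1)<\infty$. Setting $g\approx c\,\varphi_{r/2}^{1/2}\otimes(\varphi_{r/2}^{-1/2})'\otimes h_q\otimes W^{-1/2}$ (the prime denoting the tilted copy, and $c$ a normalising constant), independence multiplies the four moment functions to give the claimed expression. Factor (ii) contributes the lower constraint $\Re z>p-r$, while (i), (iii), (iv) give the upper constraints $\Re z<r$, $\Re z<2$, $\Re z<p+m$; hence equality holds on $p-r<\Re z<\min\{r,2,p+m\}$, a strip containing $p-r<\Re z<p$, and Proposition \ref{extend} together with Proposition \ref{Mellin}(ii) then pins down $\mathbb E(g^z)$ as the stated meromorphic function throughout.

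The real obstacle is factor (iii). A naive factorisation would isolate $\Gamma(-z/r)/\Gamma(-z/q)=1/\Phi_{q/r}(z/r)$, which is log-concave and therefore cannot be the moment function of any positive random variable; equivalently, the reciprocal stable $1/\Phi_{q/2}(z/2)$ cannot be realised on its own, as inverting it would demand a stable of index $>1$. The entire point of invoking Lemma \ref{existence} is that it packages $1/\Phi_{q/2}(z/2)$ together with the Gaussian factor $2^{z/2}G(q-1-z)/G(q-1)$ into the genuine moment function of $h_q$; seeing that this packaging is exactly what is available is the crux of the argument. Since Lemma \ref{existence} is stated for $1\le q<2$, this is the range in which the construction is valid, which is precisely the range $1\le q<r\le2$ relevant to the examples $\ell_2^m\oplus_r\ell_q^n$. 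Finally, the degenerate case $r=2$, in which the two $\Phi_{r/2}$ factors collapse, is absorbed by the convention $\Phi_1\equiv1$.
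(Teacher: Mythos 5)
Your proposal is correct and is essentially the paper's own proof: your factors (i)--(iv) are exactly the paper's $f_3$, $f_4$, $f_1$, $f_2$ (the $\varphi_{r/2}^{1/2}$ variable, its tilt by $\varphi_{r/2}^{p/2}$, the output of Lemma \ref{existence} with parameter $q$, and the $\mathrm{Beta}((p+m)/2,(q-p-m)/2)$ power), combined as an independent product in the same way. Your observation that Lemma \ref{existence} restricts the construction to $1\le q<2$ applies equally to the paper's argument.
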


\begin{proof}
We first use Lemma \ref{existence} to find a positive random variable $f_1$ such that
$$ \mathbb E(f_1^z)= \frac{2^{z/2}G(q-1-z)}{G(q-1)\Phi_{q/2}(z/2)}, \qquad \Re z<q.$$
Now, if $p+m<q$ we let $f_2$ to be distributed as $t^{-1/2}$ with respect to the Beta distribution
$$ d\mu=\frac{t^{(p+m)/2-1}(1-t)^{(q-p-m)/2-1}}{B((p+m)/2,(q-p-m)/2)}\,dt $$ on $[0,1].$
Then $$\mathbb E(f_2^z)= \frac{\Gamma((p+m-z)/2)\Gamma(q/2)}{\Gamma((q-z)/2)\Gamma((p+m)/2)}, \qquad \Re z<p+m,$$
and using \eqref{GzG} the latter can be rewritten as
$$ \mathbb E(f_2^z) = \frac{G(p+m-1-z)G(q-1)}{G(q-1-z)G(p+m-1)}, \qquad \Re z<p+m.$$  We write $f_2\equiv 1$ if $p+m=q.$
If $r<2$ we define $f_3\approx \varphi_{r/2}^{1/2}$ so that
$$ \mathbb E(f_3^z) =\Phi_{r/2}(z/2), \qquad \Re z<r.$$
If $r=2$ we set $f_3\equiv 1.$
If $f_3$ is defined on some probability space $(K,\mathbb P)$ we define $f_4$ as the random variable $f_3^{-1}$ with respect to the measure $f_3^pd\mathbb P/\mathbb E(f_3^p)$ so that $f_4\equiv 1$ if $r=2.$ If $r<2$ we have
$$ \mathbb E(f_4^z)= \frac{\Phi_{r/2}((p-z)/2)}{\Phi_{r/2}(p/2)}, \qquad p-r<\Re z.$$
We let $g\approx f_1\otimes f_2\otimes f_3\otimes f_4.$\end{proof}

\begin{lm}\label{g}\label{isotropicembedding}  Suppose $m\in\mathbb N,$ and $\{p,q,r\}$ are such that $q>0$ and $p+m<q<r\le 2.$ Suppose $Y\in\mathcal I_q.$  Then there is an $h$-isotropic embedding of $Y$ into $\mathcal M(\Omega,\mu)$ where $(\Omega,\mu)$ is a probability space where $h$ is symmetric and
$$ \mathbb E(|h|^z)=\frac{2^{z/2}G(p+m-1-z)G(z)\Phi_{r/2}(z/2)\Phi_{r/2}((p-z)/2)}{G(p+m-1)\Phi_{r/2}(p/2)}$$ for $ -1<\Re z<p+m.$ \end{lm}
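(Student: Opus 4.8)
The plan is to combine the two preceding lemmas. Lemma~\ref{h} produces a positive random variable $g$ whose moment function is
$$ \mathbb E(g^z)=\frac{2^{z/2}G(p+m-1-z)\Phi_{r/2}(z/2)\Phi_{r/2}((p-z)/2)}{G(p+m-1)\Phi_{r/2}(p/2)\Phi_{q/2}(z/2)},\qquad p-r<\Re z<p,$$
and my target $h$ differs from this only by a factor $G(z)$ in the numerator (together with an extended strip and a symmetrization). Since $G(z)=\mathbb E(|\gamma|^z)$ is precisely the moment function of $|\gamma|$, where $\gamma$ is a normalized Gaussian, the natural move is to build $h$ as the tensor product of $g$ with a Gaussian, with an appropriate sign. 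Concretely, I would set $h\approx \gamma\otimes g$ where $\gamma$ is an independent normalized Gaussian; then $h$ is automatically symmetric because $\gamma$ is, and by independence $\mathbb E(|h|^z)=\mathbb E(|\gamma|^z)\mathbb E(g^z)=G(z)\mathbb E(g^z)$, which gives exactly the claimed product formula.

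First I would invoke Lemma~\ref{h} to obtain $g$ on a probability space $(K,\nu)$ with the stated moments on the strip $p-r<\Re z<p$. Next I would realize $\gamma$ on an auxiliary probability space and form the product $h=\gamma\otimes g$ on the product measure space, which is again a probability space; symmetry of $h$ is immediate. The moment identity then follows by Fubini together with the independence built into the tensor construction, yielding
$$ \mathbb E(|h|^z)=\frac{2^{z/2}G(p+m-1-z)G(z)\Phi_{r/2}(z/2)\Phi_{r/2}((p-z)/2)}{G(p+m-1)\Phi_{r/2}(p/2)}.$$
A small point to check is the strip of validity: $G(z)$ is analytic for $\Re z>-1$, so multiplying by it replaces the lower bound $p-r$ by $-1$ and, after verifying integrability on the overlap and applying Proposition~\ref{extend} (or the Mellin uniqueness of Proposition~\ref{Mellin}), the formula extends to $-1<\Re z<p+m$ as asserted. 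Here I would use the assumption $p+m<q$ to guarantee the upper endpoint $p+m$ lies inside the region where $\mathbb E(g^z)$ was defined.

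The remaining and genuinely substantive part is to produce the $h$-isotropic embedding of $Y$. Here I would use the hypothesis $Y\in\mathcal I_q$: since $q>0$, there is a $q$-stable embedding of $Y$ into some $\mathcal M(\Omega_0,\mu_0)$, so each $Ty$ with $\|y\|=1$ has the distribution of $\psi_q$. The idea is to randomize this stable embedding by multiplying pointwise by an independent copy of a suitable positive variable so that the one-dimensional marginal $Ty$ is transformed from $\psi_q$ into $h$; the multiplier must convert the moment function $\Psi_q$ into $\mathbb E(|h|^z)$, and the factor $g$ from Lemma~\ref{h} is engineered (note the $\Phi_{q/2}(z/2)$ in its \emph{denominator}, matching the $\psi_q$ moments) precisely so that this works.

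The main obstacle I anticipate is exactly this last transfer step: verifying that multiplying the $q$-stable embedding by an independent copy of the variable from Lemma~\ref{existence}/Lemma~\ref{h} preserves linearity and the isotropy requirement $Ty\approx h$ for \emph{every} $\|y\|=1$ simultaneously, while keeping track of the precise moment bookkeeping so that the $\Phi_{q/2}$ and $\Psi_q=\Phi_{q/2}(z/2)G(z)$ factors cancel correctly and only the advertised expression survives. Once the moment function of the randomized marginal is computed and matched against $\mathbb E(|h|^z)$ via the uniqueness part of Proposition~\ref{Mellin}, the distributional identity $Ty\approx h$ follows and the construction is complete.
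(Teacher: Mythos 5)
Your overall strategy in the last two paragraphs --- take a $q$-stable embedding $S$ of $Y$ (which exists since $Y\in\mathcal I_q$ with $q>0$) and multiply it pointwise by an independent copy of the variable $g$ from Lemma~\ref{h} --- is exactly the paper's proof, and the issue you flag as the ``main obstacle'' is in fact immediate: $Ty=2^{-1/2}gSy$ is linear in $y$, and for every unit vector $y$ the pair $(g,Sy)$ has the same joint law, so $Ty\approx 2^{-1/2}\,g\otimes\psi_q$ simultaneously for all such $y$. The genuine error is in your first paragraph, where you identify the target $h$ as $\gamma\otimes g$. The moment function of $|\gamma\otimes g|$ is $G(z)\mathbb E(g^z)$, and this is \emph{not} the claimed formula: $\mathbb E(g^z)$ carries a factor $\Phi_{q/2}(z/2)$ in its denominator which multiplication by $G(z)$ does not remove, whereas the asserted expression contains no $\Phi_{q/2}$ at all. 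The target differs from $\mathbb E(g^z)$ by the factor $G(z)\Phi_{q/2}(z/2)=2^{-z/2}\Psi_q(z)$, not by $G(z)$; so the correct multiplier is (a multiple of) the symmetric $q$-stable variable, i.e. $h\approx 2^{-1/2}\,g\otimes\psi_q$. This is also the only choice consistent with your own embedding: since $Sy\approx\psi_q$ for $\|y\|=1$, the marginal of $2^{-1/2}gSy$ is $2^{-1/2}\,g\otimes\psi_q$, which is not distributed as $\gamma\otimes g$ unless $q=2$, a case excluded by $q<r\le 2$.

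Once $h$ is defined this way the bookkeeping you describe closes up:
$$\mathbb E(|h|^z)=2^{-z/2}\Psi_q(z)\,\mathbb E(g^z)=G(z)\Phi_{q/2}(z/2)\,\mathbb E(g^z),$$
and the $\Phi_{q/2}(z/2)$ cancels the denominator of $\mathbb E(g^z)$, yielding exactly the stated expression. Concerning the strip: rather than continuing analytically from $p-r<\Re z<p$, observe that the tensor factors constructed in the proof of Lemma~\ref{h} actually give $\mathbb E(g^z)<\infty$ for $p-r<\Re z<p+m$ (using $p+m<q<r$), while $\Psi_q(z)$ is finite for $-1<\Re z<q$; since $m\ge 1$ and $p+m<q<r$ force $p-r<-1$, independence gives the formula directly on $-1<\Re z<p+m$ with no continuation argument needed.
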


\begin{proof}  Since $Y\in\mathcal I_q$ there is a $\psi_q-$isotropic embedding $S$ of $Y$ into some $\mathcal M(\Omega_1,\mu_1)$ (where $(\Omega_1,\mu_1)$ is a probability measure space).  Let $Ty=2^{-1/2}gSy$ where $g$ is independent of $S(Y)$ and distributed as in Lemma \ref{h}.  Then $T$ is a $h$-isotropic embedding where $h$ is symmetric and
$$ \mathbb E(|h|^z)=2^{-z/2}\Psi_q(z)\mathbb E(g^z)=\Phi_{q/2}(z/2)G(z)\mathbb E(g^z)\qquad -1<\Re z<p+m.$$ \end{proof}

Let us remark that the case $p=0,$ $m=1$ and $r=2$ gives
$E(|h|^z)= 2^{z/2}G(z)G(-z)$ which means that $h$ is symmetric 1-stable, i.e. has the Cauchy distribution.

\begin{Thm}\label{mainexample}  Suppose $1\le q,r\le 2$.  Suppose $X=\ell_2^m$ and $Y\in\mathcal I_q.$  If $p\le q-m$ then $X\oplus_rY\in \mathcal
I_p.$\end{Thm}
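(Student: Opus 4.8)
The plan is to manufacture a standard (for $p>0$) or Gaussian (for $p<0$) embedding of $Z=\ell_2^m\oplus_r Y$ by gluing the $h$-isotropic embedding of $Y$ from Lemma \ref{g} to an independent $m$-dimensional Gaussian, and to verify it by a Mellin computation that collapses to a single Gamma identity. By Proposition \ref{closed} the set of good $p$ is closed, so it suffices to treat $p<q-m$ (recovering $p=q-m$ as a limit); the range $p\le-(m+n)$ is trivial and $p=0$ reduces to $p<0$. The inequality $p<q-m$ yields precisely the two constraints $p+m<q$ and, since $q\le r+m$ forces $q-m\le r$, also $p<r$; these are the only conditions the construction in Lemmas \ref{h}--\ref{g} really uses, so no relation between $q$ and $r$ is needed.

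For the construction, let $T_Y:Y\to\M(\Omega_1,\mu_1)$ be the $h$-isotropic embedding of Lemma \ref{g} on a probability space, so $T_Yy\approx\|y\|_Y h$ with $h$ symmetric and $\E|h|^z$ explicit. Take an independent standard Gaussian vector $\gamma=(\gamma_1,\dots,\gamma_m)$ on $(\Omega_0,\mathbb P_0)$ and, on $\Omega=\Omega_0\times\Omega_1$, set
$$T_Z(x,y)=(x,\gamma)+2^{-1/2}|\gamma|\,T_Yy,\qquad |\gamma|=\Bigl(\textstyle\sum_i\gamma_i^2\Bigr)^{1/2}.$$
Writing $\gamma=|\gamma|\theta$ with $\theta$ uniform on $S^{m-1}$ independent of $|\gamma|$ gives $T_Z(x,y)=|\gamma|\bigl[(x,\theta)+2^{-1/2}T_Yy\bigr]$, where $(x,\theta)\approx\|x\|_2\beta$ and $\beta=(e_1,\theta)$ is a single spherical coordinate, independent of $h$, with $\E|\beta|^w=G(w)G(m-1)/G(w+m-1)$ (the $w=0$ case of Lemma \ref{1}).

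The heart is the verification that $T_Z$ reproduces $\|\cdot\|_Z^p$. By the polar splitting and independence,
$$\int_\Omega|T_Z(x,y)|^p\,d\mu=\E|\gamma|^p\cdot\E_\beta\E_{h}\bigl|\,\|x\|_2\beta+2^{-1/2}\|y\|_Y h\,\bigr|^p .$$
I would evaluate the inner factor through its Mellin transform in the ratio $t=\|y\|_Y/\|x\|_2$: by the symmetrized identity \eqref{independent1} of Lemma \ref{independent} it equals $\frac{G(p)M_{p,2}(z)}{G(p-z)G(z)}\,\E|\beta|^{p-z}\,2^{-z/2}\E|h|^{z}$. Substituting the moments of $\beta$ and of $h$ makes the Gaussian factors $G(p+m-1-z)$ and the powers of $2$ cancel, leaving $\frac{G(p)G(m-1)}{G(p+m-1)}\,M_{p,2}(z)\,\frac{\Phi_{r/2}(z/2)\Phi_{r/2}((p-z)/2)}{\Phi_{r/2}(p/2)}$. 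The elementary identity
$$\frac{\Phi_{r/2}(z/2)\,\Phi_{r/2}((p-z)/2)}{\Phi_{r/2}(p/2)}=\frac{M_{p,r}(z)}{M_{p,2}(z)},$$
checked from \eqref{Mpq} and \eqref{phip}, turns this into $\frac{G(p)G(m-1)}{G(p+m-1)}M_{p,r}(z)$, the Mellin transform of $(1+t^r)^{p/r}$ up to the constant. By uniqueness (Proposition \ref{Mellin}) the inner factor is $\frac{G(p)G(m-1)}{G(p+m-1)}(\|x\|_2^r+\|y\|_Y^r)^{p/r}$, and since $\E|\gamma|^p=G(p+m-1)/G(m-1)$ we get $\int_\Omega|T_Zw|^p\,d\mu=G(p)\|w\|_Z^p$ for every $w\in Z$. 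This is exactly where the gain ``$-m$'' is produced: the $m$-dimensional Gaussian supplies the factor $G(p+m-1-z)$ that the moment function of $h$ was rigged in Lemma \ref{g} to absorb, so $m$ disappears from the final identity.

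From here to $\mathcal I_p$: for $p>0$ (only $m=1$ survives) this is literally an embedding into $L_p$, and for $-1<p<0$ it suffices as well, since there $|t|^p$ is positive definite, so $\|w\|_Z^p=G(p)^{-1}\int_\Omega|T_Zw|^p\,d\mu$ exhibits $\|\cdot\|_Z^p$ as an average of positive definite distributions. For $p\le-1$ the single integral diverges, and I would instead verify the Gaussian embedding \eqref{Gaussiandef} directly: $\bigl(\sum_k(T_Z\zeta_k)^2\bigr)^{p/2}$ is $(m+n)$-dimensional, hence integrable for $p>-(m+n)$, and the same transform identities---carried out in the strip of Proposition \ref{p<0Z} and continued analytically by Proposition \ref{extend}---should give $\E\|\zeta\|_Z^p=\int_\Omega(\sum_k(T_Z\zeta_k)^2)^{p/2}d\mu$ for every full-rank $Z$-valued Gaussian $\zeta$, whence $Z\in\mathcal I_p$ by Proposition \ref{equiv}. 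I expect this final step to be the main obstacle: establishing \eqref{Gaussiandef} uniformly across all covariance structures of $\zeta$ when $p\le-1$, where the convenient ``one linear functional at a time'' picture is lost and both the convergence and the analytic continuation must be handled with care.
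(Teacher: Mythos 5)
Your construction is essentially the paper's: the same $h$ from Lemma \ref{g}, glued to an independent $m$-dimensional Gaussian factor (the paper uses the normalized functional $Rx=(x,\gamma)/|\gamma|$ times a constant, you use $(x,\gamma)$ and let the radial part $|\gamma|$ ride along; since $|\gamma|$ is independent of everything else this only changes the normalizing constant), and the same verification via Mellin transforms, with your identity $\Phi_{r/2}(z/2)\Phi_{r/2}((p-z)/2)/\Phi_{r/2}(p/2)=M_{p,r}(z)/M_{p,2}(z)$ being exactly the paper's relation \eqref{HG}. Your single-vector computation is correct (modulo the fact that for $p>0$ the Mellin transform of $t\mapsto\mathbb E|1+th|^p$ converges nowhere, so you must work with the subtracted function $\mathbb E|1+th|^p-\max\{1,t\}^p$ as in Proposition \ref{Mellinh}; and modulo the reduction to $r>q$, which you do still need since Lemmas \ref{h}--\ref{g} assume $p+m<q<r$ --- the case $r\le q$ is disposed of separately because then $X\oplus_rY\in\mathcal I_r\subset\mathcal I_p$).

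The genuine gap is precisely the step you flag at the end and do not carry out: for $p<0$ (certainly for $p\le-1$, but the paper does it uniformly for all $p<0$), membership in $\mathcal I_p$ via Proposition \ref{equiv} requires \eqref{Gaussiandef} for \emph{every} full-rank $Z$-valued Gaussian process $\zeta$, whose components $\zeta_X$ and $\zeta_Y$ are in general correlated; the rank-one identity you prove does not by itself imply this. The paper closes this in Case 2 as follows. First, Lemma \ref{upper0} gives the a priori bound $\mathbb E\|\xi_X\|^u\|\xi_Y\|^v<\infty$ for $-m<u$, $-n<v$, which is what makes the two-variable Mellin computation legitimate. The key observation is that the correlation is harmless because both sides of the target identity \eqref{needed} have Mellin transforms expressible through the \emph{same} joint moment: one computes, first for $-1/2<\Re w,\Re z<0$ using Lemma \ref{symmetrize} and the Gaussian averaging identity, that both transforms equal $M_{p,r}(z)\,\mathbb E\|\xi_X\|^{p-z}\|\xi_Y\|^z$ after setting $w=p-z$; then Proposition \ref{extend} is applied twice (once in $w$, once in $z$) to continue to $-m<\Re w<0$, $-n<\Re z<0$ and to guarantee integrability there; finally a separate continuity-in-$t$ argument (dominated convergence using $\sum_j|Rx_j|^2\ge c^2>0$ a.e.) is needed before Mellin uniqueness can be invoked to conclude \eqref{needed} pointwise. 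None of this is automatic, and without it the proof establishes only that the rank-$(1,1)$ processes satisfy the Gaussian-embedding identity, which is not enough to conclude $Z\in\mathcal I_p$ for $p\le-1$.
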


\begin{proof} It is enough to consider the case $p+m>0.$  Also, the result holds trivially if $r\le q$ since then $X\oplus_rY\in \mathcal I_r\subset \mathcal I_p.$  So we may also assume that $r>q.$ Hence $p-r<q-1-r<-1.$

We treat three separate cases as $p>0,$ $p<0$ or $p=0.$

{\it Case 1: } Let $p>0$.  In this case we have $m=1$ and identify $X$ with $\mathbb R.$ In view to Lemma \ref{g} we construct an $h$-isotropic embedding $S:Y\to\mathcal M(\Omega,\mu)$ where $h$ is symmetric and
\begin{equation}\label{H}  H(z):=\mathbb E(|h|^z)= \frac{G(p-z)G(z)\Phi_{r/2}(z/2)\Phi_{r/2}((p-z)/2)}{G(p)\Phi_{r/2}(p/2)}\end{equation} for $ -1<\Re z<p+1.$  It is important to observe that $H(p)=1$ and
\begin{equation}\label{H1}  H(z)= \frac{G(p-z)G(z)M_{p,r}(z)}{G(p)M_{p,2}(z)}\end{equation}
for $ -1<\Re z<p+1.$

We define
$T:\mathbb R\oplus_rY\to\mathcal M(\Omega,\mu)$ by $T(\alpha,y)=\alpha+Sy.$
To verify that $T$ is a standard isometry we only need to show (considering $h$ as a function on $(\Omega,\mu)):$
\begin{equation}\label{aim}\int_{\Omega}|1+th|^p\,d\mu =(1+t^r)^{p/r}, \qquad 0<t<\infty.\end{equation}

To establish \ref{aim} we call Proposition \ref{Mellinh}.  By \eqref{eqMellinh0} and \eqref{H1} we have
$$ \int_0^{\infty}t^{-z-1}\int_{\Omega}\left(|1+th|^p-\min\{1,t\}^p\right)d\mu\,dt = M_{p,r}(z)+\frac{p}{(p-z)z}$$ for $-1<\Re z<p+1.$

On the other hand, by \eqref{102},\eqref{Fq} and \eqref{Mpq}
$$ \int_0^{\infty}t^{-z-1}\bigl((1+t^r)^{(w+z)/r}-\max\{1,t\}^{w+z}\bigr)\,dt= F_r(w,z)-F_\infty(w,z), $$ for $\Re w,\ \Re z<0$ and by analytic continuation this holds (and the right-hand side is holomorphic) for
$\Re w,\ \Re z<r.$ Thus using \eqref{Minf}
$$ \int_0^{\infty}t^{-z-1}((1+t^r)^{\frac{p}{r}}-\max\{1,t\}^p)\,dt = M_{p,r}(z)+\frac{p}{z(p-z)}, \qquad 0<\Re z <p$$ and by the uniqueness property of the Mellin transform, Proposition \ref{Mellin}, we conclude
that
$$ \int |1+th|^p\,d\mu=(1+t^r)^{p/r}, \qquad 0<t<\infty$$ which proves the Theorem for $p>0.$

{\it Case 2:} Let $p<0 $ and let $\dim Y=n.$ This is quite similar but now we deal with Gaussian embeddings rather than standard embeddings.
First we note that there is an $f$-isotropic embedding of $\ell_2^m$ into $\mathcal M(\Omega,\mu),$ where $(\Omega,\mu)$ is a probability measure space and $f$ is symmetric with
$$ \int |f|^z\,d\mu = \frac{G(z)G(m-1)}{G(z+m-1)}, \qquad -m<\Re z<\infty.$$  Indeed let $\{\gamma_1,\ldots,\gamma_m\}$ be independent normalized Gaussian random variables and let
$$ R(a_1,\ldots,a_m)=\frac{a_1\gamma_1+\cdots+a_m\gamma_m}{(\gamma_1^2+\cdots+\gamma_m^2)^{1/2}}.$$  We now use Lemma \ref{1}.
We consider $h=h(m,p,q,r)$ as in Lemma \ref{g}. Then
$$ H(z):= \mathbb E(|h|^z)=\frac{G(p+m-1-z)G(z)\Phi_{r/2}(z/2)\Phi_{r/2}((p-z)/2)}{G(p+m-1)\Phi_{r/2}(p/2)}$$ for $ -1<\Re z<p+m.$   Note that by \eqref{phip} and \eqref{Fq}
\begin{equation}\label{HG} H(z)= \frac{G(p+m-1-z)G(z)F_r(p-z,z)}{G(p+m-1)F_2(p-z,z)}.
\end{equation}

  We may then suppose that $S:Y\to\mathcal M(\Omega,\mu)$ is an $h$-isotropic embedding such that $R(X)$ and $S(Y)$ are independent.
Finally we define $T:X\oplus_rY\to \mathcal M(\Omega,\mu)$ by
$$ T(x+y)= \theta(Rx+Sy)$$ where $\theta>0$ is chosen so that $$\theta^p=\frac{G(p+m-1)}{G(m-1)}.$$  We will show that $T$ is a 1-Gaussian embedding.  To do this we suppose that $\xi$ is an $X\oplus_r Y-$valued Gaussian process of full rank.
Let $P:X\oplus_r Y\to X$ and $Q:X\oplus_rY\to Y$ be the natural projections onto $X$ and $Y$ respectively. Let $\xi_X=P\xi$ and $\xi_Y=Q\xi.$  Then $\xi_X$ has full rank on $X$ and $\xi_Y$ on $Y.$  In particular we can write $\xi=\sum_{j=1}^{m+n}(x_j+y_j)\gamma_j$ where $y_j=0$ for $n+1\le j\le m+n,$
by choosing an appropriate basis of Gaussian random variables.

For $0<s<t$ we have
$$\mathbb E\|\xi_X+t\xi_Y\|^p\le \mathbb E\|\xi_X+s\xi_Y\|^p \le (s/t)^p\mathbb E\|\xi_X+t\xi_Y\|^p, \qquad .$$
So, the function $t\mapsto \mathbb E\|\xi_X+t\xi_Y\|^p$ is continuous on $(0,\infty).$

Similarly since $\xi_X$ has full rank, $\{x_{n+1},\ldots,x_{m+n}\}$ form a basis of $X.$ This implies
$$ \sum_{j=n+1}^{m+n} |Rx_j|^2 \ge c^2>0 \qquad \text{a.e.}$$ and thus, since $p<0$
$$ (\sum_{j=1}^{m+n}|T(x_j+ty_j)|^2)^{p/2} \le c^p\qquad \text{a.e.}$$  We now may conclude, by the Lebesgue Dominated Convergence Theorem, that the map
 $$ t\mapsto \int_{\O}(\sum_{j=1}^{m+n}|T(x_j+ty_j)|^2)^{p/2}d\mu$$ is also continuous on $(0,\infty)$.  We will show that
 \begin{equation}\label{needed}\int_{\O}(\sum_{j=1}^{m+n}|T(x_j+ty_j)|^2)^{p/2}d\mu =\mathbb E\|\xi_X+t\xi_Y\|^p, \qquad 0<t<\infty\end{equation}
 by computing the Mellin transform of the left and the right-side of the equality.

 By Lemma \ref{upper0} we have \begin{equation}\label{upper1}\mathbb E\|\xi_X\|^u\|\xi_Y\|^v<\infty, \qquad -m<u,\ -n<v.\end{equation}

 Suppose $x\in X$ and $y\in Y$ are non-zero.  Then for $-1/2<u,\ v<0$ we use Lemma  \ref{independent} to compute:
\begin{align*} &\int_0^{\infty}\int_{\Omega}t^{-1-v}|T(x+ty)|^{u+v}\,d\mu\,dt=\\
&=\frac12\int_{\Omega}\int_0^{\infty}t^{-1-v}(|Tx+tTy|^{u+v}+|Tx-tTy|^{u+v})\,dt\,d\mu\\
&=  \frac{G(u+v)F_2(u,v)}{G(u)G(v)}\int_{\Omega} |Tx|^u|Ty|^v\,d\mu .\end{align*}
Then by the definition of $T,$ equation \eqref{HG} and Lemma \ref{1}, the latter is equal to
$$ G(m-1)\frac{\theta^{u+v}G(u+v)F_2(u,v)H(v)}{G(u+m-1)G(v)}\|x\|^u\|y\|^v<\infty. $$
The calculation can then be repeated for $-1/2<\Re w, \Re<0$ to give
 \begin{align*} &\int_0^{\infty}\int_{\Omega}t^{-z-1}|T(x+ty)|^{w+z}\,d\mu\,dt \\ &=G(m-1)\frac{\theta^{w+z}G(w+z)F_2(w,z)H(z)}{G(w+m-1)G(z)}\|x\|^w\|y\|^z.\end{align*}

 Again calculating first with real $u,v,$ using Tonelli's theorem and since $\{\gamma_i\}$ are Gaussian r.v. we may compute the following integral for $-1/2<\Re z,\ \Re w<0,$
 \begin{align*} &\int_0^\infty t^{-z-1} \int_\Omega (\sum_{j=1}^{m+n} (T(x_j+ty_j))^2)^{(w+z)/2}d\mu\,dt\\
&= \frac{1}{G(w+z)}\mathbb E\left(\int_0^{\infty}t^{-z-1}(\int_\Omega |\sum_{j=1}^{m+n}\gamma_jTx_j +t\gamma_jTy_j|^{w+z}d\mu)dt\right).\end{align*}
Then by Lemma \ref{symmetrize}, using \eqref{upper1} we have
\begin{align}\label{MT} &= G(m-1)\frac{\theta^{w+z}F_2(w,z)H(z)}{G(w+m-1)G(z)}\mathbb E\left(\|\sum_{j=1}^{m+n}\gamma'_jx_j\|^w\|\sum_{j=1}^{m+n}\gamma'_jy_j\|^z\right) \nonumber \\
&= G(m-1)\frac{\theta^{w+z}F_2(w,z)H(z)}{G(w+m-1)G(z)}\mathbb E(\|\xi_X\|^w\|\xi_Y\|^z)
\end{align}
Now the right-hand side of \eqref{MT} extends to be holomorphic when $-n<\Re z<0$ and $-m<\Re w<0.$  Using Proposition \ref{extension} (twice) one obtains that
$$\int_0^{\infty}\int_{\Omega}t^{-1-v}(\sum_{j=1}^{m+n} (T(x_j+ty_j))^2)^{(u+v)/2}\,d\mu\,dt<\infty$$ when $-m<u<0$ and $-n<v<0.$   This in turn means that the function
$$(w,z)\mapsto \int_0^{\infty}\int_{\Omega}t^{-z-1}(\sum_{j=1}^{m+n} (T(x_j+ty_j))^2)^{(w+z)/2}\,d\mu\,dt $$ is holomorphic for $-m<\Re w<0$ and $-n<\Re z<0.$  Thus we have
 \begin{align*} &\int_0^{\infty}\int_{\Omega}t^{-z-1}(\sum_{j=1}^{m+n} (T(x_j+ty_j))^2)^{(w+z)/2}\,d\mu\,dt=\\ &=G(m-1)\frac{\theta^{w+z}F_2(w,z)H(z)}{G(z)G(w+m-1)}\mathbb E\|\xi_X\|^w\|\xi_Y\|^z\end{align*} whenever $-m<\Re w<0$ and $-n<\Re z<0.$
 In particular by \eqref{HG} we have that for $\max\{-n,p\}<\Re z<0$
 \begin{equation}\label{mel}\int_0^{\infty}t^{-z-1}\int_{\Omega}(\sum_{j=1}^{m+n} (T(x_j+ty_j))^2)^{p/2}\,d\mu\,dt=M_{p,r}(z)\mathbb E\|\xi_X\|^{p-z}\|\xi_Y\|^z.\end{equation}

 On the other hand for $-1/2<\Re w,\ \Re z<0$, using \eqref{Fq} we get that
 \begin{align*} \int_0^{\infty}t^{-1-z}E\|\xi_X+t\xi_Y\|^{w+z}dt &=\mathbb E\int_0^{\infty}t^{-z-1}(\|\xi_X\|^r+t\|\xi_Y\|^r)^{(w+z)/r}dt\\
 &=F_r(w,z) \mathbb E\|\xi_X\|^w\|\xi_Y\|^z.\end{align*}
 As before these calculations should be done first for real $w,z$ to justify the use of Fubini's theorem.
 Since the right-hand side is holomorphic for $-m<\Re w<0$ and $-n<\Re z<0$, we again use Proposition \ref{extension} to derive equality for $(w,z)$ in the larger region.  Hence the Mellin transform of the right-side of \eqref{needed} is
\begin{equation}\label{ME} \int_0^{\infty}t^{-1-z}E\|\xi_X+t\xi_Y\|^pdt=M_{p,r}(z)\mathbb E\|\xi_X\|^{p-z}\|\xi_Y\|^z,
\end{equation} for $\max\{-n,p\}<\Re z<0.$  Comparing \eqref{mel} and \eqref{ME} we get \eqref{needed}. In particular
 $$ \mathbb E\|\xi\|^p=\int_{\O} (\sum_{j=1}^{m+n}|T(x_j+y_j)|^2)^{p/2}\,d\mu,$$  which implies that $T$ is a 1-gaussian embedding.

 {\it Case 3:} When $p=0$ the result follows by showing that each space embeds into $L_p$ for every $p<0.$
 \end{proof}

 The particular case $p=0$ with $r=2$ also follows if we consider Proposition 6.6 of \cite{KaltonKoldobskyYaskinYaskina2007}.

 \begin{Thm}\label{ex2}  For any $-\infty<p<2$ and any $n\ge 3-p$ there exists a normed space $X$ of dimension $n$ such that $X\in \mathcal I_s$ whenever $s\le p$ and $X\notin \mathcal I_s$ whenever $s>p.$  We may take  $X=\ell_2^{1-[p]}\oplus_r\ell_{q}^{n-1+[p]}$ where $q=1+p-[p]$ and $q<r\le 2.$\end{Thm}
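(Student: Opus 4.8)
The plan is to verify that the space $X=\ell_2^m\oplus_r\ell_q^{d}$ with $m=1-[p]$, $q=1+p-[p]$, $d:=n-m=n-1+[p]$, and any fixed $r\in(q,2]$ does the job. First I would record the two elementary facts on which everything hinges: that $q\in[1,2)$ and $q-m=p$, and that for integer $n$ the hypothesis $n\ge 3-p$ is \emph{exactly} $d\ge 2$ (since $\lceil 3-p\rceil=3-[p]$). Thus $Y:=\ell_q^{d}$ has dimension at least $2$ and $X=\ell_2^m\oplus_r Y$ has dimension $n$. I would also note $p>-n$ (from $n+p\ge 3>0$), so the convention on $\mathcal{I}_s$ only trivializes $\mathcal{I}_s(n)$ for $s\le -n<p$, consistently with the claim.

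For the embedding half ($s\le p$) the work is already done: $\ell_q$ embeds isometrically into $L_q$ for $1\le q\le 2$, so $Y\in\mathcal{I}_q$, and Theorem \ref{mainexample} applied with this $Y$ gives $X=\ell_2^m\oplus_r Y\in\mathcal{I}_s$ for every $s\le q-m=p$. (When $1\le p<2$ we have $m=0$ and $X=\ell_p^n$, and the statement reduces to the classical fact $\ell_p^n\in\mathcal{I}_s$ for $s\le p$.)

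The substance is the converse: $X\notin\mathcal{I}_s$ for $s>p$. Assume $X\in\mathcal{I}_s$ with $s>p$ and split into two cases. If $s>q$, then $Y$ is isometric to the coordinate subspace $\{0\}\oplus_r Y$ of $X$ (using $N(0,1)=1$), so $Y\in\mathcal{I}_s$; but $\ell_q^{d}$ with $d\ge 2$ and $q<2$ does not embed into $L_s$ for $s>q$ \cite{Bretagnolleetal1965/6}, a contradiction. If instead $p<s\le q$ --- which forces $m\ge 1$ and $p<1$ --- I would first descend to an index $s'\in(p,1]$: take $s'=s$ when $s\le 1$, and when $1<s\le q<2$ use the nesting $\mathcal{I}_s\subseteq\mathcal{I}_1$ \cite{Bretagnolleetal1965/6} to replace $s$ by $s'=1$. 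In either case $X\in\mathcal{I}_{s'}$ with $p<s'\le 1$ and $s'>-n$, so Theorem \ref{main} applies to $X=\ell_2^m\oplus_r Y$ (the norm $\oplus_r$ is absolute with both exponents equal to $r$) and yields $Y\in\mathcal{I}_{q'}$ for every $q'$ with $s'-r\le q'\le\min\{r,s'+m\}$. Since $s'>p=q-m$ we have $s'+m>q$, and $r>q$, so $\min\{r,s'+m\}>q$; choosing $q'\in(q,\min\{r,s'+m\})$ produces $\ell_q^{d}\in\mathcal{I}_{q'}$ with $q'>q$ and $d\ge 2$, again contradicting \cite{Bretagnolleetal1965/6}.

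The main obstacle is this last step: one must see that the hypothesis $s>p$ is precisely what pushes the upper endpoint $\min\{r,s'+m\}$ of the interval produced by Theorem \ref{main} strictly past $q$, so that the forbidden embedding $\ell_q^{d}\hookrightarrow L_{q'}$ with $q'>q$ is forced. The remaining ingredients --- the arithmetic identity $q-m=p$, the translation $n\ge 3-p\Leftrightarrow d\ge 2$, and the reduction of large $s$ to an index $\le 1$ where Theorem \ref{main} is available --- are routine once this is in place.
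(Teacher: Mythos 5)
Your proposal is correct and follows essentially the same route as the paper: Theorem \ref{mainexample} for the embedding direction, and Theorem \ref{main} combined with the classical fact that $\ell_q^{d}$ ($d\ge 2$, $q<2$) does not embed in $L_{q'}$ for $q'>q$ for the converse. Your treatment is in fact slightly more careful than the paper's, since you explicitly reduce the case $s>1$ (where Theorem \ref{main} does not directly apply) via the subspace argument and the nesting $\mathcal I_s\subseteq\mathcal I_1$, and you handle the endpoint $\min\{r,s'+m\}$ correctly rather than asserting $Y\in\mathcal I_{s+m}$ outright.
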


 \begin{proof}  If $1\le p<2$ then $q=p$ and $X=\ell_p^n.$ Then by \cite{Koldobsky1999b} we have that $X\in\mathcal I_s$ only if $s\le p$ (see also the Introduction).

 Let $p<1.$ Then by Theorem \ref{mainexample} if $s\le p$ then $X\in\mathcal I_s,$ since $q=m+p.$ Conversely, we suppose that $X\in\mathcal I_s.$ If $n=1$ there is nothing to prove, so we may assume that $n\geq2.$ Then $n-1+[p]\ge 2-p+[p]>1.$ By Theorem \ref{main}, $\ell_q^{n-1+[p]}\in \mathcal I_{\alpha},$ where $\a\leq min\{r, s+1-[p]\}.$ But $q<r$ so $\ell_q^{n-1+[p]}\in \mathcal I_{s+1-[p]}.$ Consequently, $s+1-[p]\le 1+p-[p]$ which implies that $s\le p.$ \end{proof}

\begin{bibsection}
\begin{biblist}
\bib{Bretagnolleetal1965/6}{article}{
  author={Bretagnolle, J.},
  author={Dacunha-Castelle, D.},
  author={Krivine, J.-L.},
  title={Lois stables et espaces $L\sp {p}$},
  language={French},
  journal={Ann. Inst. H. Poincar\'e Sect. B (N.S.)},
  volume={2},
  date={1965/1966},
  pages={231--259},
}

\bib{KaltonKoldobsky2004}{article}{
  author={Kalton, N. J.},
  author={Koldobsky, A.},
  title={Banach spaces embedding isometrically into $L\sb p$ when $0<p<1$},
  journal={Proc. Amer. Math. Soc.},
  volume={132},
  date={2004},
  pages={67\ndash 76 (electronic)},
}

\bib{KaltonKoldobsky2005}{article}{
  author={Kalton, N. J.},
  author={Koldobsky, A.},
  title={Intersection bodies and $L\sb p$-spaces},
  journal={Adv. Math.},
  volume={196},
  date={2005},
  pages={257--275},
}

\bib{KaltonKoldobskyYaskinYaskina2007}{article}{
  author={Kalton, N. J.},
  author={Koldobsky, A.},
  author={Yaskin, V.},
  author={Yaskina, M.},
  title={The geometry of $L\sb 0$},
  journal={Canad. J. Math.},
  volume={59},
  date={2007},
  pages={1029--1049},
}

\bib{Koldobsky1998}{article}{
  author={Koldobsky, A.},
  title={Second derivative test for intersection bodies},
  journal={Adv. Math.},
  volume={136},
  date={1998},
  pages={15--25},
}

\bib{Koldobsky1998a}{article}{
  author={Koldobsky, A.},
  title={Intersection bodies in ${\mathbb R}\sp 4$},
  journal={Adv. Math.},
  volume={136},
  date={1998},
  pages={1--14},
}

\bib{Koldobsky1999}{article}{
  author={Koldobsky, A.},
  title={A generalization of the Busemann-Petty problem on sections of convex bodies},
  journal={Israel J. Math.},
  volume={110},
  date={1999},
  pages={75--91},
}

\bib{Koldobsky1999b}{article}{
  author={Koldobsky, A.},
  title={Positive definite distributions and subspaces of $L\sb {-p}$ with applications to stable processes},
  journal={Canad. Math. Bull.},
  volume={42},
  date={1999},
  pages={344--353},
}

\bib{Koldobsky2000}{article}{
  author={Koldobsky, A.},
  title={A functional analytic approach to intersection bodies},
  journal={Geom. Funct. Anal.},
  volume={10},
  date={2000},
  pages={1507--1526},
}

\bib{Koldobsky2005}{book}{
  author={Koldobsky, A.},
  title={Fourier analysis in convex geometry},
  series={Mathematical Surveys and Monographs},
  volume={116},
  publisher={American Mathematical Society},
  place={Providence, RI},
  date={2005},
  pages={vi+170},
}

\bib{KoldobskyYaskin2008}{book}{
  author={Koldobsky, A.},
  author={Yaskin, V.},
  title={The interface between convex geometry and harmonic analysis},
  series={CBMS Regional Conference Series in Mathematics},
  volume={108},
  publisher={Published for the Conference Board of the Mathematical Sciences, Washington, DC},
  date={2008},
  pages={x+107},
}

\bib{Lutwak1988}{article}{
  author={Lutwak, E.},
  title={Intersection bodies and dual mixed volumes},
  journal={Adv. in Math.},
  volume={71},
  date={1988},
  pages={232--261},
}

\bib{MilmanE2006}{article}{
  author={Milman, E.},
  title={Generalized intersection bodies},
  journal={J. Funct. Anal.},
  volume={240},
  date={2006},
  pages={530--567},
}

\bib{Remmert1998}{book}{
   author={Remmert, R.},
   title={Classical topics in complex function theory},
   series={Graduate Texts in Mathematics},
   volume={172},
   note={Translated from the German by Leslie Kay},
   publisher={Springer-Verlag},
   place={New York},
   date={1998},
%   pages={xx+349},
%   isbn={0-387-98221-3},
%   review={\MR{1483074 (98g:30002)}},
}

\bib{Schlieper2007}{article}{
  author={Schlieper, J.},
  title={A note on $k$-intersection bodies},
  journal={Proc. Amer. Math. Soc.},
  volume={135},
  date={2007},
  pages={2081--2088 (electronic)},
}

\bib{Yaskin2008}{article}{
  author={Yaskin, V.},
  title={On strict inclusions in hierarchies of convex bodies},
  journal={Proc. Amer. Math. Soc.},
  volume={136},
  date={2008},
  pages={3281--3291},
}

\bib{Zemanian1987}{book}{
 author={Zemanian, A. H.},
 title={Generalized integral transformations},
 publisher={Dover Publications, Inc.},
 date={1987},
 }

\end{biblist}
\end{bibsection}

\end{document}